\newtheorem{theorem}{Theorem}[section]
\newtheorem{lemma}{Lemma}
\newtheorem{corollary}{Corollary}
\newtheorem{proposition}{Proposition}
\newtheorem{definition}{\bf Definition}
\newtheorem{remark}{Remark}
\newenvironment{proof of}[1]{{\bf Proof of #1.}}{\hfill$\Box$\\}
\newcommand{\R}{\mathbb{R}}
\newcommand{\ad}{\mathrm{ad}}
\newcommand{\spn}{\mathrm{span}}
\newcommand{\ve}{\varepsilon}
\newcommand{\ba}{\mathbf{a}}
\newcommand{\bb}{\mathbf{b}}
\newcommand{\bc}{\mathbf{c}}
\newcommand{\bx}{\mathbf{x}}
\newcommand{\by}{\mathbf{y}}
\newcommand{\bz}{\mathbf{z}}
\newcommand{\bn}{\mathbf{n}}
\newcommand{\mH}{\mathfrak H}
\newcommand{\nequiv}{\not\equiv}
\newcommand{\mg}{\mathfrak g}
\begin{document}
\title{Canonical frames for distributions of odd rank and corank 2 with maximal first Kronecker index}
\author{Wojciech Kry\'nski
\address{Institute of Mathematics, Polish Academy of Sciences, ul. \'Sniadeckich 8, 00-956 Warszawa,
Poland, E-mail: krynski@impan.gov.pl}
\and
Igor Zelenko
\address{Department of Mathematics, Texas A$\&$M University, College Station, TX 77843-3368, USA;
E-mail: zelenko@math.tamu.edu}} \subjclass[2000]{58A30, 58A17, 53A55, 35B06.}
\keywords{Nonholonomic distributions, Pfaffian systems, canonical
frames, abnormal extremals, pseudo-product structures, symmetries, bi-graded nilpotent Lie algebras}

\begin{abstract}
We construct canonical frames and find all maximally symmetric models for a natural generic class of  corank 2 distributions
on manifolds of odd dimension greater or equal to $7$. This class of distributions is characterized by the following two
conditions: the pencil of $2$-forms associated with the corresponding Pfaffian system has the maximal possible
first Kronecker index and the Lie square of the subdistribution generated by the kernels of all these $2$-forms is equal to the original distribution. In particular, we show that the unique, up to a local equivalence, maximally symmetric model in this class of distributions with given dimension of the ambient manifold exists
if and only if the dimension of the ambient manifold is equal to $7$, $9$, $11$, $15$ or $8l-3$, $l\in\mathbb N$.
Besides, if the dimension of the ambient manifold is equal to $19$, then there exist two maximally symmetric models, up to a local equivalence, distinguished by certain discrete invariant. For all other dimensions of ambient manifold there are families of maximally symmetric models, depending on continuous parameters.
Our main tool is the so-called symplectification procedure  having its origin in Optimal Control Theory.
 Our results can be seen as an extension of some classical Cartan's results on
 rank 3 distributions in $\mathbb R^5$ to corank 2 distributions of higher odd rank.
 %$(3,5)$-distribution with so called non-integrable square root.
\end{abstract}
\maketitle\markboth{ Wojciech Kry\'nski and Igor Zelenko} {Canonical frames for distributions of odd rank and corank 2}
\section{Introduction}
\indent

%\subsection{Distributions and their symbols}
\subsection{Distributions and their Tanaka symbols}
\emph{A distribution $D$ of rank $l$ on a $n$-dimensional manifold $M$ or an $(l,n)$-distribution} is a subbundle of the tangent bundle $TM$ with $l$-dimensional fiber. \emph{The corank of an $(l,n)$- distribution} by definition is  equal to $n-l$. Obviously corank is equal to the number of independent Pfaffian equations defining $D$.
Distributions  appear naturally in Control Theory as control
systems linear with respect to controls and in
the geometric theory of ordinary and partial differential equations.

The general problem is  to determine equivalence for germs of these geometric
objects with respect to the natural action of the group of germs of diffeomorphisms of $M$.  Except for
several cases such as line distributions, corank one distributions
and rank $(2,4)$-distributions generic
distributions have functional, and, thus, non-trivial differential
invariants.

The basic characteristics of a distribution $D$ is its \emph{weak derived flag} and the \emph{Tanaka symbol}.
% and the \emph{symbol} at a point
By taking iterative brackets of vector fields tangent to a distribution, one obtains the filtration of the tangent bundle.
More precisely, set $D=D^{1}$ and define recursively
%The obvious (but very rough in the most cases) discrete invariant of a
%distribution $D$ at $q$ is so-called \emph{ the small growth vectors} at $q$.
%It is the tuple
%%$\bigl(\dim D(q),\dim D^2(q),\dim D^3(q),\ldots\bigr)$,
%$\{\dim D^j(q)\}_{j\in{\mathbb N}}$, where $D^j$ is the $j$-th power
%of the distribution $D$, i.e.,
$D^{j}=D^{j-1}+[D,D^{j-1}]$, $j>1$. The space  $D^j(q)$ is called the \emph{ $j$th power of the distribution $D$ at a point $q$}. Clearly $D^j\subseteq D^{j+1}$.
The filtration $\{D^j\}_{j\in \mathbb N}$ is called the \emph{weak derived flag} of a distribution
%at a point
and the tuple of dimensions of the subspaces of this filtration at a given point is called the \emph{small growth vector} of the distribution at this point.
A point of $M$ is called a \emph{regular point}  of a distribution if the small growth vector is constant in a neighborhood of this point.

Further, Let $\mg^{-1}(q)\stackrel{\text{def}}{=}D(q)$ and $\mg^{-j}(x)\stackrel{\text{def}}{=}D^{j}(q)/D^{j-1}(q)$ for $j>1$
If a point $q$ is regular, then the graded space $\frak{m}_q=\sum_{\leq -1}
%D^{j+1}(q)/D^j(q)
\mg^j(q)$ can be naturally equipped with a structure of a graded
nilpotent Lie algebra called a \emph{symbol} of the distribution $D$ at a point
$q$.
%This space is endowed naturally with the structure of a graded nilpotent Lie algebra, generated by
%$\mg^{-1}(x)$.
Indeed, let $\mathfrak p_j:D^j(q)\mapsto \mg^{-j}(q)$ be the canonical projection to a factor space. Take $Y_1\in\mg^{-i}(q)$ and $Y_2\in \mg^{-j}(q)$. To define the Lie bracket $[Y_1,Y_2]$ take a local section $\widetilde Y_1$ of the distribution $D^i$ and
a local section $\widetilde Y_2$ of the distribution $D^j$ such that $\mathfrak p_i\bigl(\widetilde Y_1(q)\bigr) =Y_1$
and $\mathfrak p_j\bigl(\widetilde Y_2(q)\bigr)=Y_2$. It is clear that $[\widetilde Y_1,\widetilde Y_2](q)\in D^{i+j}(q)$. Put
\begin{equation}
\label{Liebrackets}
[Y_1,Y_2]\stackrel{\text{def}}{=}\mathfrak p_{i+j}\bigl([\widetilde Y_1,\widetilde Y_2](q)\bigr).
\end{equation}
It is easy to see that the right-hand side  of \eqref{Liebrackets} does not depend on the choice of sections $\widetilde Y_1$ and
$\widetilde Y_2$. Besides, $\mg^{-1}(q)$ generates the whole algebra $\mathfrak{m}(q)$.
A graded Lie algebra satisfying the last property is called \emph{fundamental}.

One can define the \emph{flat distribution $D_{\mathfrak{m}}$ of constant fundamental symbol $\mathfrak{m}$}.
For this let $M(\mathfrak{m})$ be the connected, simply connected Lie group with the
Lie algebra $\mathfrak{m}$ and let $e$ be its identity. Then $D_\mathfrak{m}$ is the left invariant distribution on $M(\mathfrak{m})$ such that $D_{\mathfrak{m}}(e)=\mg^{-1}$.

The notion of symbol is extensively used in works of N.~Tanaka and
his school (\cite{tan1, tan2, mori,mor, yamag}) who developed the prolongation procedure to construct canonical frames (coframes)  for distributions of so-called constant type, i.e. when the symbols at different points are isomorphic as graded Lie algebras.
In particular, as it was proved in \cite{tan1}, for any fundamental symbol $\mathfrak m$  the flat distribution $D_{\mathfrak m}$ has the algebra of infinitesimal symmetries of maximal dimension among all distributions of constant symbol $\mathfrak m$  and this algebra can be described algebraically in terms of the so-called \emph{universal prolongation of the $\mathfrak m$}, which is in essence the maximal (non-degenerate) graded Lie algebra, containing the graded Lie algebra
%$\displaystyle{\bigoplus_{i\leq 0}\mg^i}$
$\mathfrak m$ as its negative part.

%\subsection{Corank2 distributions and pencils or $2-$forms}
 Consider $(2k+1, 2k+3)$-distributions $D$ with small growth vector $(2k+1, 2k+3)$. The case $k=1$, i.e. the case of $(3,5)$-distributions,  was treated already by Elie Cartan in ~\cite{cart10}.  Such distributions have the prescribed symbol
%(see the explicit description of this symbol in \eqref{symb} below for more general situation)
and the flat distribution is nothing but the Cartan distribution
%if the subdistribution $D_2\subset D$ is integrable then it is well known that $D$ is locally equivalent to the so-called Cartan
%distribution
on the space $J^1(\mathbb R,\mathbb R^2)$ of the 1-jets of functions from $\mathbb R$ to $\mathbb R^2$.
%(the explicit normal form for the fla distribution is given in \eqref{symb} below in more general situation).
So it has the infinite dimensional group of symmetries.
Besides, there exists the unique rank 2 subdistribution $\widetilde D\subset D$ such that
$\widetilde D^2\subset D$.
Moreover, the subdistribution $\widetilde D$ is integrable if and only if $D$ is locally equivalent to
the flat distribution.
%is nothing but the Cartan distribution
%if the subdistribution $D_2\subset D$ is integrable then it is well known that $D$ is locally equivalent to the so-called Cartan
%distribution
%on the space $J^1(\mathbb R,\mathbb R^2)$ of the 1-jets of functions from $\mathbb R$ to $\mathbb R^2$. In particular, it has %infinite dimensional algebra of infinitesimal symmetries.
If the sub-distribution $\widetilde D$ is not integrable, then the germ of $\widetilde D$ at some point satisfies $\widetilde D^2=D$ and the small growth vector of $\widetilde D$ is $(2,3,5)$ . So the equivalence problem for $D$ is reduced to the equivalence problem for $\widetilde D$. The subdistribution $\widetilde D$ has constant symbol and the universal Tanaka prolongation of this symbol  is equal to the exceptional Lie algebra $G_2$.

Now consider the case of an arbitrary $k$. Obviously, the Lie algebra structure of the symbol $\mathfrak m(q)=D(q)\oplus T_qM/D(q)$ is encoded   by the map $A_q\in {\rm Hom}\bigl( \bigwedge^2 D(q), T_qM/D(q)\bigr)$ such that $$A_q(X,Y)=[X,Y], \quad X,Y\in D(q),$$
 where the Lie brackets in the right-hand side are as in the symbol $\mathfrak m(q)$.
 Equivalently, one can consider its dual $A_q^*\in {\rm Hom}\bigl((T_qM/D(q)^*,  \bigwedge^2 D(q)^*)$,
\begin{equation}
\label{A*}
A_q^*(p)(X,Y)=p([X,Y])\quad X,Y\in D(q), \,\,p\in  \bigl(T_qM/D(q)\bigr)^*,
\end{equation}
which can be seen as the pencil of skew-symmetric forms on $D(q)$. This pencil is called the \emph{pencil associated with the distribution $D$ at the point $q$}.
%Note that from the condition $D^2=TM$ it follows that $A^*$ is a monomorphism, therefore  he image of turn can be identified with the %pencil of skew-symmetric forms on $D(q)$ as follows:
%to any $p\in \bigl(T_qM/D(q)\bigr)^*$ on can assygn
So, all symbols of such distributions are in one-to-one correspondence with the equivalence classes of pencils of skew-symmetric forms on $(2k+1)$-dimensional linear space. The canonical forms of pencils of matrices are given by the classical theorems of
Weierstrass and
Kronecker (see \cite[Chapter~12]{Gantmacher}). For pencils of skew-symmetric bilinear forms they are specified in
\cite[Section~6]{Gauger}).
With the help of these forms it was shown recently (\cite{radko}) that \emph{for any symbol of  $(2k+1, 2k+3)-$distributions the corresponding flat distribution  has an infinite dimensional algebra of infinitesimal symmetries.}

\subsection{Genericity assumptions and description of main results}
On the other hand, by analogy with the case $k=1$ we can define a natural generic subclass of $(2k+1, 2k+3)-$distributions with finite dimensional algebra of infinitesimal symmetries. For this one can distinguish a special subdistribution $\widetilde D$ of $D$ (may be with singularities), satisfying $\widetilde D^2\subset D$.  The above-mentioned generic subclass of corank 2 distributions will be  defined  according to the weak derived flag  of $\widetilde D$.
%, such that they have finite dimensional algebra of infinitesimal symmetries.
More precisely, let
us fix an auxiliary volume form $\Omega$ on $D(q)$ and for any $p\in \bigl(T_qM/D(q)\bigr)^*$,  define a vector $X_p\in D$ via the relation
\begin{equation}
\label{Xomega}
i_{X_p}\Omega=\wedge^kA_q^*(p),
\end{equation}
%where in the right hand side we have the $k$th exterior power $d\omega^k$ restricted to $D(x)$.
Then the following subspace $\widetilde D(q)$ of $D(q)$
\begin{equation}
\label{tildeD}
\widetilde D(q)={\rm span}\{X_p(q): p\in(T_qM/D(q)^*\}
\end{equation}
is well defined independently of the choice of $\Omega$.
The following statement is immediate from \eqref{Xomega}:

\begin{lemma}
 \label{Xplem}
 The assignment $p\mapsto X_p$ is a  vector-valued degree $k$ homogeneous polynomial on  $(T_qM/D(q)^*$ and $\dim \widetilde D(q)\leq k+1$.
\end{lemma}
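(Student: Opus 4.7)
The plan is to exploit two features of the set-up: the linearity of $p\mapsto A_q^*(p)$ in $p$, and the hypothesis that $D$ has corank $2$, so that $\bigl(T_qM/D(q)\bigr)^*$ is two-dimensional.

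First I would fix a basis $p_1,p_2$ of $\bigl(T_qM/D(q)\bigr)^*$ and set $\omega_i := A_q^*(p_i)\in \bigwedge^2 D(q)^*$ for $i=1,2$. For a general covector $p=s_1 p_1+s_2 p_2$, linearity of $A_q^*$ gives $A_q^*(p)=s_1\omega_1+s_2\omega_2$. Expanding the $k$-th exterior power by multilinearity and skew-symmetry then yields
\begin{equation*}
\wedge^k A_q^*(p) \,=\, \sum_{j=0}^k \binom{k}{j}\, s_1^j s_2^{k-j}\; \omega_1^{\wedge j}\wedge \omega_2^{\wedge (k-j)},
\end{equation*}
which is a $\bigwedge^{2k} D(q)^*$-valued homogeneous polynomial of degree $k$ in the coordinates $(s_1,s_2)$ of $p$. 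Since $\dim D(q)=2k+1$ and $\Omega$ is a volume form on $D(q)$, the contraction map $X\mapsto i_X\Omega$ is a linear isomorphism $D(q)\xrightarrow{\sim}\bigwedge^{2k} D(q)^*$; applying its inverse to the displayed identity produces
\begin{equation*}
X_p \,=\, \sum_{j=0}^k \binom{k}{j}\, s_1^j s_2^{k-j}\, X_j,
\end{equation*}
where each $X_j\in D(q)$ is the unique vector satisfying $i_{X_j}\Omega = \omega_1^{\wedge j}\wedge\omega_2^{\wedge (k-j)}$. This is precisely the claimed vector-valued degree-$k$ homogeneous polynomial expression for $p\mapsto X_p$, and since its image is contained in $\mathrm{span}\{X_0,\dots,X_k\}$, a subspace of $D(q)$ of dimension at most $k+1$, we conclude $\dim\widetilde D(q)\leq k+1$.

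There is no real analytical obstacle: once one notices that corank $2$ forces the pencil to be a two-parameter linear family, the assertion follows from the binomial expansion of $\wedge^k$ and the standard contraction isomorphism. The only subtlety worth flagging is that the individual vectors $X_j$ (and hence the polynomial $p\mapsto X_p$) do depend on the choice of $\Omega$; however, rescaling $\Omega$ by a nonzero scalar rescales every $X_j$ by the reciprocal of that scalar, which is consistent with the previously noted fact that the subspace $\widetilde D(q)$ itself is independent of $\Omega$.
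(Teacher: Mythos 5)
Your proof is correct. The paper does not actually supply a proof for this lemma — it simply declares the statement ``immediate from \eqref{Xomega}'' — and your argument spells out exactly the intended reasoning: the linearity of $p\mapsto A_q^*(p)$ over the two-dimensional space $\bigl(T_qM/D(q)\bigr)^*$ reduces $\wedge^k A_q^*(p)$ to a binomial expansion with $k+1$ monomials (using that $2$-forms commute under wedge), and the isomorphism $X\mapsto i_X\Omega$ from $D(q)$ to $\bigwedge^{2k}D(q)^*$ transports this to the claimed vector-valued degree-$k$ homogeneous polynomial whose image lies in the span of the $k+1$ vectors $X_j$.
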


It is easy to observe from the definition of $\widetilde D$  that
%the Lie square
\begin{equation}
\label{widesquare}
\widetilde D^2\subset D
\end{equation}
(see also
\cite[Proposition~2]{JKP}). Therefore for a flat distribution the subdistribution $\widetilde D$ is integrable.
%In contrast to the case $k=1$, for $k>1$ the integrability of the subdistribution $\widetilde D$ does not imply the local equivalence %to the flat distribution. Moreover, functional invariants appear here, which are the object for a separate study.

In the present paper we will consider $(2k+1, 2k+3)$-distributions $D$ with $D^2=TM$, satisfying the following two genericity assumptions

\begin{itemize}
\item[(G1)] $\dim \widetilde D\equiv k+1$;

\item[(G2)] $\widetilde D^2=D$.
%The subdistribution $\widetilde D$ is bracket-generating.
\end{itemize}  

Note that under condition (G1) the projectivization of the assignment $p\mapsto X_p$ at any point $q\in M$ defines a \emph{rational normal curve} in the projective space $\mathbb P(\widetilde D(q))$ (or the Veronese embedding of the real projective line $\mathbb {RP}^1$ into
 $\mathbb P(\widetilde D(x))$). In particular, for $k=2$ this curve defines the  quadric  or, equivalently, the  sign-indefinite quadratic form $Q$, up to a multiplication by a nonzero constant on $\widetilde D$.

Condition (G1) can be described in terms of the so-called \emph {first minimal index} or the \emph{ first Kronecker index} of the pencil associated with $D$.
%$\{d\omega|_D(x)\ |\ \omega\in\Gamma(D^\perp)\}$ of 2-forms on the space $D(x)$.
%The canonical forms of pencils of matrices are given by theorems of
%Weierstrass and
%Kronecker (see \cite[Chapter~12]{Gantmacher}). For pencils of skew-symmetric bilinear forms they are specified in
%\cite[Section~6]{Gauger}).
Since $\dim D(q)$ is odd, this pencil is singular, i.e. each form in it has a nontrivial kernel. Moreover, there exists a homogeneous polynomial map
 $B:T_qM/D(q)\rightarrow D(q)$ such that $B_q(p)\in {\rm ker} A_q^*(p)$ and $B_q\neq 0$. The \emph{ first minimal index} or the \emph{first Kronecker index}
 of the pencil associated with distribution at $q$ (and also of the distribution $D$ at $q$) is by definition the minimal possible degree of such polynomial map.

Lemma \ref{Xplem} implies that the first Kronecker index is not greater than $k$. Further, from the Kronecker canonical form for pencils of skew-symmetric matrices \cite[Theorem 6.8]{Gauger} one can get

  \begin{proposition}
  \label{maxequiv}
  The following four conditions are equivalent:
  \begin{enumerate}
  \item The distribution $D$ satisfies condition (G1);
  \item The first Kronecker index of $D$ is equal to $k$ at any point, i.e. it is maximal possible at any point;
  \item For any $q\in M$ and for any $p\in T_qM/D(q)$, $p\neq 0$, the kernel of the corresponding form $A^*(p)$ is one-dimensional or, equivalently, the kernel is spanned by the vector  $X_p(q)$, defined by \eqref{Xomega}.

  \item The distribtuion D has constant symbol isomorphic to the following graded nilpotent Lie algebra $\mg^{-1}\oplus\mg^{-2}$, where
$\mg^{-1}={\rm span}\{\bx_0,\ldots,\bx_k,\by_1,\ldots,\by_k\}$, $\mg^{-2}={\rm span}\{\bz,\bn\}$, and all nonzero products are \begin{equation}
\label{symb}
[\bx_i,\by_{k-i}]=\bz,\quad [\bx_{i+1},\by_{k-i}]=\bn, \quad \forall\, 0\leq i\leq k-1.
\end{equation}
  \end{enumerate}
  \end{proposition}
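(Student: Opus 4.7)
My plan is to reduce all four conditions to a statement about the Kronecker canonical form of the pencil of skew-symmetric bilinear forms $A_q^*$ on $D(q)$, using the classification in \cite[Theorem~6.8]{Gauger}. The easy direction $(4)\Rightarrow(1),(2),(3)$ I would verify by direct computation in the model symbol. For $p=a\bz^*+b\bn^*$, the relations \eqref{symb} give a matrix of $A_q^*(p)$ in the basis $\{\bx_0,\ldots,\bx_k,\by_1,\ldots,\by_k\}$ whose only non-zero block is a $(k+1)\times k$ matrix with two antidiagonals carrying $a$ and $b$; this is precisely the canonical form of a single Kronecker minimal-indices block of index $k$. Its left kernel is one-dimensional for every $(a,b)\neq(0,0)$ and is spanned by the polynomial $\sum_{i=0}^{k}(-1)^i a^i b^{k-i}\bx_i$ of degree exactly $k$, yielding (3) and (2) simultaneously. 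Linear independence of $\{\bx_0,\ldots,\bx_k\}$ then gives $\widetilde D(q)=\spn\{\bx_0,\ldots,\bx_k\}$ of dimension $k+1$, establishing (1).

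For the converse implications I use the canonical decomposition of the pencil into Kronecker minimal-indices blocks of odd sizes $2\epsilon_j+1$ (with indices $\epsilon_j$) and regular (Weierstrass) blocks of even sizes. The implication $(2)\Rightarrow(4)$ is immediate: if the smallest Kronecker index equals $k$ then every block has size at least $2k+1$; since the total dimension is $2k+1$, there is a unique Kronecker block and no regular part, recovering the symbol in (4). For $(1)\Rightarrow(2)$, the key observation is that $p\mapsto X_p$ is a homogeneous polynomial of degree $k$ with values in $\ker A_q^*(p)$, and condition (1) amounts to the linear independence of its $k+1$ monomial coefficients. This forces $X_p\neq 0$ for all $p\neq 0$ (so $\ker A_q^*(p)$ is one-dimensional) and rules out any non-trivial factorization $X_p=f(p)B(p)$ with $B$ a vector polynomial of degree $<k$, since such a factorization would place all monomial coefficients of $X_p$ in $\spn\{B(p):p\}$, a subspace of dimension at most $k$. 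Hence $X_p$ is the lowest-degree polynomial in the kernel and the first Kronecker index equals $k$. Finally, $(3)\Rightarrow(4)$ follows once the canonical form is invoked: a one-dimensional kernel at every non-zero $p$ admits only a single Kronecker block of size $2k+1$ and no regular part.

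The main delicacy lies in $(3)\Rightarrow(4)$ over $\mathbb R$, because a regular Weierstrass block with purely complex-conjugate eigenvalues contributes no extra real kernel. The standard resolution is to view the Kronecker canonical form and the first Kronecker index as invariants of the complexified pencil, in line with the classical treatments \cite{Gantmacher, Gauger}; the one-dimensional kernel hypothesis read over $\mathbb C$ then directly excludes both additional Kronecker blocks and any regular part, forcing the unique Kronecker block of index $k$. An equivalent route, entirely over $\mathbb R$, is to combine $(3)\Rightarrow(1)$ with the already-proven chain $(1)\Rightarrow(2)\Rightarrow(4)$: in the presence of any regular part, the degree-$k$ polynomial $X_p$ would factor through the lower-degree kernel generator of the smaller Kronecker block, forcing $\dim\widetilde D\leq k$ in contradiction with (1).
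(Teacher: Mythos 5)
Your overall plan --- reduce everything to the Kronecker canonical form of the skew-symmetric pencil, as in \cite[Theorem~6.8]{Gauger} --- is exactly the route the paper intends (the paper gives no explicit argument, it simply cites Gauger). The implications $(4)\Rightarrow(1),(2),(3)$, $(2)\Rightarrow(4)$, and $(1)\Rightarrow(2)$ are handled correctly in spirit. One small caveat on $(1)\Rightarrow(2)$: you claim a polynomial $B$ of degree $<k$ with $B(p)\in\ker A_q^*(p)$ would give a \emph{polynomial} factorization $X_p=f(p)B(p)$; this is not automatic. The clean way is: under (1), $\ker A_q^*(p)=\mathbb R X_p$ is one--dimensional for all $p\neq0$, so $B(p)\wedge X_p\equiv0$; expanding in the monomial basis and using the linear independence of the $k+1$ coefficients of $X_p$ one checks directly that every coordinate of $B$ must be divisible by $a^ib^{k-i}$ for all $i$, forcing $B\equiv0$ once $\deg B<k$.

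The real gap is in $(3)\Rightarrow(4)$, at exactly the spot you flagged. Your first ``resolution'' silently replaces (3) by its complexification, which is a different (stronger) hypothesis: as written, (3) quantifies only over real $p$, and over $\mathbb R$ the implication is actually \emph{false}. For any $0\le\ell<k$, take the direct sum of a single Kronecker block of index $\ell$ on $\mathbb R^{2\ell+1}$ with a regular pencil on $\mathbb R^{2(k-\ell)}$ whose Pfaffian has no real roots (already for $k-\ell=2$, the pencil $a(e^{12}+e^{34})+b(e^{13}-e^{24})$ has $\mathrm{Pf}=a^2+b^2$). Then $\ker A^*(p)$ is one--dimensional for every real $p\neq0$ and is spanned by $X_p=\mathrm{Pf}(p)\,X_p^{\mathrm{Kr}}$, so (3) holds; but $\dim\widetilde D=\ell+1\le k$ and the first Kronecker index is $\ell<k$, so (1), (2), (4) all fail. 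Your second, ``entirely over $\mathbb R$'' route does not repair this: it derives a contradiction ``with (1)'', but (1) is precisely what you are trying to establish from (3) at that point --- the argument is circular. To close the loop one must either read (3) over $\mathbb C$ (changing the proposition), or strengthen (3) to assert that the coefficient vectors of $X_p$ are independent (which is just (1) again). This appears to be a genuine imprecision in the proposition itself; note, however, that in the body of the paper only the chain $(1)\Leftrightarrow(2)\Leftrightarrow(4)$ and the forward implication to (3) are ever used, so the error is harmless downstream --- but your write-up should make the caveat explicit rather than present the complexification as a ``standard resolution'' of the stated equivalence.
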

The item (2) of the previous proposition explains the terminology used in the title of the paper.

Note also that if (G1) holds and $\widetilde D^2$ is strictly contained in $D$ then from item (4) of Proposition \ref{maxequiv} it follows that $\widetilde D^3$ is not contained in $D$ so that in general $D$ is not recovered from $\widetilde D$.
%This motivates our assumption (G2).
Therefore if one wants to study $D$ via  $\widetilde D$ one must to assume (G2).

Can we solve the equivalence problem for the class of distributions, satisfying both (G1) and (G2), in the frame of Tanaka theory, applied for subdistribution $\widetilde D$ for $k>1$? For $k=2$  the subdistribution $\widetilde D$ may have 3 different symbols.
  These symbols can be characterized as follows:
  %it can be shown that in this case $\widetilde D^2=D$.
  The distribution $\widetilde D$ has the distinguished rank 2 subdistribution $\bar D\subset \widetilde D$, satisfying $\bar D^2\subset\widetilde D$. Then, depending on the signature of the restriction of the above mentioned sign-indefinite quadratic form $Q$ to the plane $\bar D(q)$, one has 3 symbols:
 \emph{parabolic, hyperbolic,} or \emph{elliptic}. They are explicitly written in \cite{kuz} (algebras $m7{\_}3{\_}3$ (parabolic case), $m7\_3\_6$ (hyperbolic case), and $m7\_3\_6r$ (elliptic case) in the list there).
 %These symbols can be characterized as follows: it can be shown that in this case $\widetilde D^2=D$. Then the distribution %$\widetilde D$ has the distinguished rank 2 subdistribution $\bar D\subset D$, satisfying $\bar D^2=\widetilde D$. Then there are 3 %cases according to the signature of the restriction of the quadratic form $Q$ to the plane $\bar D(q)$ and they characterize 3 %possible symbol: if $Q|_{\bar D(q)}$ is degenerated, then the symbol of $D$ is isomorphic to $m7_3_3$ from \cite{kuzmich}; %hyperbolic case, when $Q|_{\bar D(q)}$ intersect $C$ is sign-indefinite, corresponds to the symbol $m7_3_6$:
It can be shown
%using the list of $7$-dimensional non-degenerate  fundamental symbols from  \cite {kuz}
that the flat (5,7)-distribution corresponding to the square of the flat distribution with the symbol $m7{\_}3{\_}3$ (i.e. parabolic case)  is the unique, up to the local equivalence, maximally symmetric
among all (5,7)-distributions satisfying conditions (G1) and (G2): the universal prolongation of $m7{\_}3{\_}3$ is $9$-dimensional, while the universal prolongations of $m7\_3\_6$ (hyperbolic case), and $m7\_3\_6r$ (elliptic case) are $8$-dimensional.
The graded Lie algebra symbol $m7{\_3}{\_}3$ is described as follows: $m7{\_3}{\_}3=\mg^{-1}\oplus\mg^{-2}\oplus\mg^{-3}$ where $\mg^{-1}=\spn\{\bx_0,\bx_1,\bx_2\}$, $\mg^{-2}=\spn\{\by_1,\by_2\}$, $\mg^{-3}=\spn\{\bz,\bn\}$ and all nonzero products are
\begin{equation}
\label{sym57}
\begin{split}
~&[\bx_i,\by_{2-i}]=\bz,\qquad [\bx_{i+1},\by_{2-i}]=\bn,\qquad i=0,1;\\
%and
~&[\bx_0,\bx_1]=\by_1,\quad [\bx_0,\bx_2]=\by_2.
\end{split}
\end{equation}

However, starting from $k=3$ the set of symbols of $\widetilde D$ depends on continuous parameters. So in order to apply Tanaka's theory to the considered class of distributions one has to classify all this symbols and to generalize this theory to the distribution with non-constant symbol.

Instead, we use the so-called  \emph {symplectification of the problem} or the \emph{symplectification procedure}. This procedure was already successfully used for other classes of distributions such as rank 2 and rank 3 distributions of so-called maximal class
(\cite{zelvar, zelcart,doubzel1, doubzel2,doubzel3}). It allows to overcome the dependence on symbol in the construction of the canonical frames.

The important object here is the so-called \emph{annihilator $D^\perp$ of $D$}, which is
the subbundle of the cotangent bundle $T^*M$ with the fibers %defined as follows
$
D^\perp(q)=\{p\in T_q^* M\ |\ p(D(q))=0\}.
$
By $\mathbb P T^*M$ denote the projectivization $\mathbb P T^*M$ of the cotangent bundle  $T^*M$, i.e. the fiber bundle over $M$ with the fiber over $q$  equal to the projectivizations of $T^*_qM$. In the same way let $\mathbb P D^\perp$ be the projectivization of $D^\perp$. For a corank 2 distribution $D$ the bundle $\mathbb P D^\perp$ has one-dimensional fibers.
Besides, $\mathbb P D^\perp$ is foliated by the so-called \emph{abnormal extremals} (the characteristic curves of $\mathbb P T^*M$). Thus $\mathbb P T^*M$ is equipped with two rank $1$ distributions $V$ and $C$: $V$ is the distribution tangent to the fibers and $C$ is the distribution tangent to the foliation of abnormal extremals. Besides, the rank 2 distribution $V\oplus C$ is bracket generating. So, the distributions $V$ and $C$  define the so-called \emph{pseudo-product structure} on $\mathbb P D^\perp$. In this way the equivalence problem for the original distribution is reduced to the equivalence problem for such pseudo-product structures.

In the sequel the subdistribution $\widetilde D$ will be denoted by $D_{k+1}$ in order to emphasize its rank.
%The main result of the paper regarding the canonical frames can be formulated as follows (for more precise statement see Theorem %\ref{thm 2} below):
The main results of the paper is the construction of the canonical frame for all $(2k+1,2k+3)$-distribution $D$ with $k>1$ satisfying assumptions (G1) and (G2)  (Theorem \ref{thm 2})  and the description of all maximally symmetric models for $k>2$ (Corollary \ref{cor 6}).
In particular, we show that the dimension of the infinitesimal symmetries of such distributions is not greater than
$2k+6$ if $k\nequiv 1 \mod 4$ and $k>2$, it is not greater than $2k+7$ if $k\equiv 1\mod 4$ and $k>1$, and it is not greater than $9$ if $k=2$ . The latter case $k=2$ also follows from the analysis of the list of 7-dimensional non-degenerate fundamental symbols in \cite{kuz}, but even in this case our construction of the canonical frame is unified for all $(5,7)$-distributions, satisfying conditions (G1) and (G2), independently of the symbol of the corresponding subdistribution $D_3$.  Note that the normal form for the maximally symmetric $(5,7)$-distribution (as the square of the flat dsitribution with the symbol with the product table as in \eqref{sym57})
%which is given via the product table \eqref{sym57} of the symbol of the corresponding subdistribution $D_2$ and %its uniqueness up to a local equivalence
can be obtained from the analysis of our frame as well, but it is too technical to be included here (the case $k=2$ is exceptional as shown in Corollary \ref{cor 2} and it needs a separate analysis, while all $k>2$ can be treated uniformly).

Now let us shortly describe our results from section 4 on the maximally symmetric models in the case $k>2$ . All maximally symmetric models are given as the left invariant distributions on Lie groups corresponding to certain bi-graded nilpotent Lie algebras. \emph{The unique, up to a local diffeomorphism,  maximally symmetric model exist for $k=3$, $k=4$, $k=6$ and $k\equiv 1 \mod 4$}.
Further, if  $k=8$ (i.e. the dimension of the ambient manifold is equal to $19$), then there exist two maximally symmetric models, up to a local equivalence, distinguished by certain discrete invariant. Finally, for $k=7$ and $k>9$ with $k\nequiv 1\mod 4$ there are continuous families of distributions having maximal (i.e. $(2k+6)$-dimensional) algebras of infinitesimal symmetries (for details see Corollary \ref{cor 6} below).

Now let us give an explicit description of the maximally symmetric model for all $k$, when it is unique:

{\bf 1) The case $k=3$.}
A $(7,9)$-distribution satisfying conditions (G1) and (G2) with maximal (i.e 12-dimensional) algebra of infinitesimal symmetries is locally equivalent to the square of the flat distribution with the symbol algebra $\mathfrak m=\mg^{-1}\oplus\mg^{-2}\oplus\mg^{-3}$ where $\mg^{-1}=\spn\{\bx_0,\bx_1,\bx_2,\bx_3\}$, $\mg^{-2}=\spn\{\by_1,\by_2,\by_3\}$, $\mg^{-3}=\spn\{\bz,\bn\}$ and all nonzero products are
\begin{equation}
\label{symm79}
\begin{split}
~&[\bx_i,\by_{3-i}]=(-1)^i\bz,\qquad [\bx_{i+1},\by_{3-i}]=(-1)^{i+1}(i+1)\bn,\quad i=0,1,2;\\
%and
~&[\bx_0,\bx_1]=\by_1,\quad [\bx_0,\bx_2]=\by_2,\quad [\bx_0,\bx_3]=3\by_3,\quad[\bx_1,\bx_2]=-2\by_3.
\end{split}
\end{equation}

{\bf 2) The case $k=4$.}
A $(9,11)$-distribution satisfying conditions (G1) and (G2) with maximal (i.e. 14-dimensional) algebra of infinitesimal symmetries is locally equivalent to the square of the flat distribution with the symbol algebra $\mathfrak m=\mg^{-1}\oplus\mg^{-2}\oplus\mg^{-3}$ where $\mg^{-1}=\spn\{\bx_0,\bx_1,\bx_2,\bx_3,\bx_4\}$, $\mg^{-2}=\spn\{\by_1,\by_2,\by_3,\by_4\}$, $\mg^{-3}=\spn\{\bz,\bn\}$ and all nonzero products are

\begin{equation}
\label{symm911}
\begin{split}
~&[\bx_i,\by_{4-i}]=(-1)^i\bz,\qquad [\bx_{i+1},\by_{4-i}]=(-1)^{i+1}(i+1)\bn,\qquad i=0,1,2,3;\\
~&[\bx_0,\bx_1]=\by_1,\quad [\bx_0,\bx_2]=\by_2,\quad [\bx_0,\bx_3]=-\frac{3}{2}\by_3,\quad [\bx_0,\bx_4]=-4\by_4,\\ ~&[\bx_1,\bx_2]=\frac{5}{2}\by_3,\quad [\bx_1,\bx_3]=\frac{5}{2}\by_4.
\end{split}
\end{equation}

{\bf 3) The case $k=6$.}
A $(13,15)$-distribution satisfying conditions (G1) and (G2) with
maximal (i.e. 18-dimensional) algebra of infinitesimal symmetries is
locally equivalent to the square of the flat distribution with the
symbol algebra $\mathfrak m=\mg^{-1}\oplus\mg^{-2}\oplus\mg^{-3}$
where $\mg^{-1}=\spn\{\bx_0,\bx_1,\bx_2,\bx_3,\bx_4,\bx_5,\bx_6\}$,
$\mg^{-2}=\spn\{\by_1,\by_2,\by_3,\by_4,\by_5,\by_6\}$,
$\mg^{-3}=\spn\{\bz,\bn\}$ and all nonzero products are

\begin{equation}
\label{symm1315}
\begin{split}
~&[\bx_i,\by_{6-i}]=(-1)^i\bz,\qquad
[\bx_{i+1},\by_{6-i}]=(-1)^{i+1}(i+1)\bn,\qquad i=0,1,2,3,4,5;\\
~&
[\bx_0,\bx_1]=-\frac{10}{7}\by_1,\quad
[\bx_0,\bx_2]=-\frac{10}{7}\by_2,\quad
[\bx_0,\bx_3]=-\frac{3}{7}\by_3,\quad\\
~&[\bx_0,\bx_4]=\frac{4}{7}\by_4,\quad
[\bx_0,\bx_5]=\frac{25}{7}\by_5,\quad
[\bx_0,\bx_6]=\frac{60}{7}\by_6,
\\ ~&
[\bx_1,\bx_2]=-\by_3,\quad
[\bx_1,\bx_3]=-\by_4,\quad
[\bx_1,\bx_4]=-3\by_5,\quad
[\bx_1,\bx_5]=-5\by_6,
\\ ~&
[\bx_2,\bx_3]=2\by_5,\quad
[\bx_2,\bx_4]=2\by_6.
\end{split}
\end{equation}

{\bf 4) The case $k\equiv 1 \mod 4$.}
The unique, up to a local equivalence,  maximally symmetric models in the case $k\equiv 1 \mod 4$ can be described using the theory of $\mathfrak{sl}_2(\mathbb R)$ representations. For this let $\mathcal V_k$ be the $(k+1)$-dimensional irreducible $\mathfrak {sl}_2(\mathbb R)$-module, $\mathcal V_k=\mathrm{Sym}^k(\mathbb R^2)$. Recall that the $\mathfrak{sl}_2(\mathbb R)$-module $\mathcal V_k\otimes \mathcal V_l$ with $l\leq k$ decomposes into the irreducible $\mathfrak{sl}_2(\mathbb R)$ submodules as follows:

\begin{equation}
\label{tensor}
\mathcal V_k\otimes \mathcal V_l\cong\bigoplus_{0\leq s\leq l}\mathcal V_{k+l-2s},
\end{equation}
while the $\mathfrak{sl}_2(\mathbb R)$-module $\wedge^2 \mathcal V_k$ decomposes into the irreducible $\mathfrak{sl}_2(\mathbb R)$ submodules as follows:
\begin{equation}
\label{wedge}
\wedge^ {2}\mathcal V_k\cong\bigoplus_{0\leq s\leq \frac{k-1}{2}} \mathcal V_{2k-2-4s},
\end{equation}
(see, for example,\cite{Fulton}).
Let $\sigma_{k,l,s}: \mathcal V_k\otimes \mathcal V_l\rightarrow V_{k+l-2s}$ be the canonical projection w.r.t. the  splitting \eqref{tensor} and
$\tau_{k,s}:\bigwedge ^2\mathcal V_k\rightarrow \mathcal V_{2k-2-4s}$ be the canonical projection w.r.t. the splitting \eqref{wedge}.
Note that the $k$-dimensional subspace appears in the splitting \eqref{wedge} if and only if $k\equiv 1 \mod 4$. In this case it corresponds to the index $s=\frac{k-1}{4}$ in the decomposition in the right-hand side of \eqref{wedge}.

Let $\mathfrak m_k=\mathcal V_k\oplus \mathcal V_{k-1}\oplus \mathcal V_1$. Then in the case $k\equiv 1 \mod 4$ the space $\mathfrak m_k$ can be equipped with the structure of the graded Lie algebra: First, let  $\mg^{-1}=\mathcal V_k$, $\mg^{-2}=\mathcal V_{k-1}$, $\mg^{-3}=\mathcal V_1$. Second,  define the Lie product on $\mathfrak m_k$ by the following two operators:
$$\tau_{k, \frac{k-1}{4}}:\wedge ^2\mathcal V_k\rightarrow \mathcal V_{k-1},\quad \sigma_{k,k-1,k-1}:\mathcal V_k\otimes \mathcal V_{k-1}\rightarrow \mathcal V_1.$$

Let us show that this product satisfies the Jacobi identity i.e. that the map $J:\wedge ^3\mathcal V_k\rightarrow \mathcal V_1$ defined by
$$J(v_1,v_2,v_3)=\sum_{\text{cyclic}}
\sigma_{k,k-1,k-1}\bigl(\tau_{k,\frac{k-1}{4}}(v_1,v_2),v_3\bigr),$$
is identically equal to zero. First note that by constructions the map $J$ is a homomorphism of $\mathfrak{sl}_2(\mathbb R)$-modules, i.e. it commutes with the actions of $\mathfrak{sl}_2(\mathbb R)$ on $\wedge ^3\mathcal V_k$ and $\mathcal V_1$. Assume that $J$ is not identically zero. Then $J$ has to be onto, otherwise its image is a proper $\mathfrak{sl}_2(\mathbb R)$-submodule of $V_1$ which is impossible. Therefore the kernel of $J$ is a $\mathfrak{sl}_2(\mathbb R)$-submodule of $\wedge ^3V_k$ of codimension $2$. On the other hand, $\wedge^3\mathcal V_k$  does not contain such submodule, because  the $2$-dimensional module $\mathcal V_1$ does not appear in the decomposition of $\wedge ^3\mathcal V_k$ into the irreducible $\mathfrak{sl}_2(\mathbb R)$-submodules.
To prove this recall that the number of appearances of the module $\mathcal V_l$ in this decomposition is equal to
$N_k(l)-N_k(l+2)$, where
$$N_k(l)=\#\left\{(i_1,i_2,i_3)\in \mathbb Z_{{\rm odd}}^3: -k\leq i_1<i_2<i_3\leq k,\, i_1+i_2+i_3=l\right\}$$
and $\mathbb Z_{{\rm odd}}$ denotes the set of odd integers. In other words, $N(l)$ is the number of non-ordered triples of pairwise distinct odd integers between
$-k$ and $k$ with the sum equal to $l$.
%From the parity arguments $N_k(l)$ is equal to zero for even $l$, which implies
%that $\mathcal V_0$ (and more generally $\mathcal V_l$ with even $l$) does not appear in this decomposition.
%because  only even dimensional submodules appear there by the parity arguments.
The module $\mathcal V_1$ does not appear in this decomposition for $k=2s+1$, $s\in\mathbb N$, because in this case $N_k(1)=N_k(3)$(=
%for  any $k=2l+1$, $l\in \mathbb N$
%$$N(l)=\#\left\{(i_1,i_2,i_3)\in \mathbb Z_{{\rm odd}}^3: \begin{array}{c}-k\leq i_1<i_2<i_3\leq k,\\ %i_1+i_2+i_3=1\end{array}\right\}=
%  \#\left\{(i_1,i_2,i_3)\in \mathbb Z_{{\rm odd}}^3: \begin{array}{c}-k\leq i_1<i_2<i_3\leq k,\\ i_1+i_2+i_3=3\end{array}\right\},$$
%the number of non-ordered triples of pairwise distinct odd integers between
%$-k$ and $k$ with the sum equal to $1$ is the same as the number of non-ordered triples of pairwise distinct odd numbers between
%$-k$ and $k$ with the sum equal to $3$
%where $\mathbb Z_{{\rm odd}}$ denotes the set of odd integers
$\frac{s(s+1)}{2}$).
As a matter of fact, we have proved more general fact that any homomorphism of $\mathfrak{sl}_2(\mathbb R)$-modules $\wedge ^3\mathcal V_k$ and $\mathcal V_1$ is identically equal to zero.

%A $(2k+1,2k+3)$-distribution satisfying conditions (G1) and (G2) with maximal (i.e $2k+7$-dimensional) algebra of infinitesimal symmetries is locally equivalent to
Further, the square of the flat distribution $\mathfrak D_k$ with the symbol algebra $\mathfrak m_k$ has $(2k+7)$-dimensional algebra of infinitesimal symmetries isomorphic to the natural semi-direct sum of $\mathfrak{gl}_2(\mathbb R)$ and $\mathfrak m_k$.
Indeed, the algebra $\mg^0$ of all derivations of the symbol $\mathfrak m_k$ preserving the grading contains the image of the irreducible embedding of $\mathfrak {sl}_2(\mathbb R)$ into $\mathfrak{gl}(\mathcal V_k)$ and the grading element. Therefore $\mg^0$ is at least $4$-dimensional and by \cite{tan1} the algebra of infinitesimal symmetries of the distribution $\mathfrak D_k$ (and also of its square) is at least $(2k+7)$-dimensional. On the other hand, by Theorem \ref{thm 2} below this algebra is at most $(2k+7)$-dimensional. By Corollary \ref{cor 6} the distribution  $\mathfrak D_k$ is the unique, up to the local equivalence, maximally symmetric model of distributions from the considered class
for $k\equiv 1 \mod 4$.
%MFrom Corollary \ref{cor 6} below a $(2k+1,2k+3)$-distribution, $k\equiv 1 \mod 4$, satisfying conditions (G1) and (G2) with maximal (i.e $2k+7$-dimensional) algebra of infinitesimal symmetries is locally equivalent to $\mathfrak D_k$.

%\begin{theorem}\label{thm 1}
%Assume that $(2k+1,2k+3)$-distribution $D$ has maximal first Kronecker index and $D_{k+1}$ is completely non-holonomic.
%\begin{enumerate}
%\item If $k\equiv 1\mod 4$ and $k\geq 5$, then there exists a canonical frame on rank 3 bundle over an open and dense subset of $\mathbb P(D^\perp)$.
%\item If $k\not\equiv 1\mod 4$ and $k\neq 2$, then there exists a canonical frame on rank 2 bundle over an open and dense subset of $\mathbb P(D^\perp)$.
%\item If $k=2$ then there exists a canonical frame on an open and dense subset of $D^\perp$.
%\end{enumerate}
%\end{theorem}

%For any $k>1$ we also describe all distributions having maximal algebra of infinitesimal symmetries (see Corollary \ref{cor 6}).
%For example, ???
\section{Symplectification procedure}
\setcounter{equation}{0}
\setcounter{theorem}{0}
\setcounter{lemma}{0}
\setcounter{proposition}{0}
\setcounter{corollary}{0}
\setcounter{remark}{0}

\subsection{ Characteristic rank 1 distribution on $\mathbb P (D^\perp)$}
Let us describe the process of symplectification of the problem.
For this first let us recall some standard notions from Symplectic Geometry.
Let $\tilde\pi:T^*M\mapsto M$ be the canonical projection. For any
$\lambda\in T^*M$, $\lambda=(p,q)$, $q\in M$, $p\in T_q^*M$, let
$\varsigma(\lambda)(\cdot)=p(\tilde\pi_*\cdot)$ be the canonical Liouville
form and $\sigma=d\varsigma$ be the standard symplectic structure on
$T^*M$. Given a function $H:T^*M\mapsto \mathbb R$ denote by $\vec H$
the corresponding Hamiltonian vector field defined by the
relation $i_{\vec H}\sigma
%(\vec G,\cdot)
=-d\,H$. Given a vector field $X$ on $M$ define the function $H_X:T^*M\rightarrow \mathbb R$, the \emph{quasi-impulse} of $X$, by
$H_X(\lambda)=p\bigl(X(q)\bigr)$, where $\lambda=(p,q)$, $q\in M$, $p\in T_q^*M$. The corresponding Hamiltonian vector field $\vec H_X$ on $T^*M$ is called the \emph{Hamiltonian lift of the vector field $X$}. It is easy to show that $\tilde\pi_*\vec H_X=X$.

%Further if $\mathcal B$ is a smooth vector bundle over $M$ then the sheaf of all smooth sections of $\mathcal B$ is denoted %$\Gamma(\mathcal B)$.
As before, let $D$ be a $(2k+1, 2k+3)$-distribution  with $D^2=TM$. If $\mathcal B$ is a smooth vector bundle over $M$, then the sheaf of all smooth sections of $\mathcal B$ is denoted by $\Gamma(\mathcal B)$.
For any vector field  $Y\in \Gamma(D)$ and any $\lambda=(p,q)\in D^\perp$, where  $q\in M$, $p\in T_q^*M$, the vector
$\vec H_Y (\lambda)$ depends on the vector $Y(q)$ only. This implies that
that  for any $\lambda\in D^\perp$ we set
$$\vec H_D(\lambda)={\rm span}\{\vec H_Y(\lambda): Y \in \Gamma (D)\},$$
then the map $\tilde\pi_*|_{\vec H_D(\lambda)}\colon \vec H_D(\lambda)\rightarrow D\bigl(\tilde\pi(\lambda)\bigr)$ is an isomorphism. The space
$\vec H_D(\lambda)$ is called the \emph
{Hamiltonian lift of the distribution $D$} at $\lambda\in D^\perp$.

Further, since $D^\perp$ is an odd dimensional manifold, the restriction $\sigma(\lambda)|_{D^\perp}$ of the standard symplectic form $\sigma$ on $D^\perp$ has a nontrivial kernel for any $\lambda\in D^\perp$.  This kernel can be described in term of the the $\vec H_D(\lambda)$. Note that the space $(T_qM/D(q))^*$ is identified canonically with the space $(D^\perp)(q)$. Therefore the map $A_q^*$ from \eqref{A*} can be seen as an element of ${\rm Hom}\bigl((D^\perp)(q),  \bigwedge^2 D(q)^*)$. Then it is not hard to show that
for all  $\lambda=(p,q)\in  D^\perp$ one has
\begin{equation}
\label{ker}
{\rm ker} \,\sigma(\lambda)|_{D^\perp}=\vec H_D(\lambda)\cap T_\lambda (D^\perp)=\{v\in \vec H_D(\lambda): \tilde\pi_*v\in {\rm ker} \, A_q^*(p)\}.
%, \quad \forall \lambda=(p,q)\in  D^\perp.
\end{equation}
%One can easily argue that under the assumption $\rk\,D_{k+1}=k+1$
Hence from items (2) and  (3) of Proposition \ref{maxequiv} it follows that for corank 2 distributions with maximal first Kronecker index  ${\rm ker} \,\sigma(\lambda)|_{D^\perp}$
is one dimensional at any point $p\in D^\perp_0$, where $D^\perp_0$ denotes the annihilator of $D$ without the zero section.
%excluded.
In other words, $\ker\sigma|_{D^\perp}$ defines  a rank 1 distribution $\widetilde C$ on $D^\perp_0$.
%The integral curves of this distribution define \emph{a characteristic foliation of $D$}.
Besides, from \eqref{ker} it follows that

%\begin{equation}

\begin{equation}
\label{ker1}
\widetilde\pi_* \widetilde C(\lambda)
%{\rm ker} \,\sigma(\lambda)|_{D^\perp}
=\{\mathbb R X_p(q)\}, \quad D_{k+1}(q)={\rm span}
\{
%{\rm ker} \,\sigma(\lambda)|_{D^\perp}
\widetilde\pi_*\bigl(\widetilde C(\lambda)\bigr): \lambda\in (D^\perp)_0(q)\}.
\end{equation}

% Moreover, it can be shown that the vectors tangent to the singular curves span exactly $D_{k+1}$ (see \cite{JKP} for details).

% and he leaves of this foliation are called \emph{abnormal extremals} .

 Let, as before,  $\mathbb P(D^\perp)$ be the projectivization of the annihilator. Since $\sigma$ is preserved by the flow of the Euler vector field on $D^\perp$ , the rank 1 distribution $\widetilde C$ on $D^\perp_0$ is well projected to the rank 1 distribution $C$ on $\mathbb P(D^\perp)$. This rank 1 distribution is called the \emph{characteristic distribution associated with $D$}.
%The distribution of tangent spaces to characteristic foliation on $P(D^\perp)$ will be denoted $C$. $C$ has rank 1.
Integral curves of $C$ are called \emph{characteristic curves} or \emph {abnormal extremals}  of $D$.
%The projections of characteristic curves to the base manifold $M$ are called \emph{singular curves} of $D$.
The reason for the latter name is that by the Pontryagin Maximum Principle in any variational problem on $M$ with non-holonomic constraints defined by $D$  these curves are exactly the extremals with zero Lagrange multiplier near the functional.
 %in any variational problem on $M$ with non-holonomic constraints defined by $D$.

\subsection{Canonical filtrations on $T\mathbb P(D^\perp)$}
Now let $\pi\colon \mathbb P(D^\perp)\to M$ be the canonical projection. Define the lifts of $D$ and $D_{k+1}$  to $\mathbb P(D^\perp)$ by the formulae
$$
H=\pi^{-1}_*(D),\qquad H_{k+1}=\pi^{-1}_*(D_{k+1}).
$$
Note that by constructions the characteristic distribution $C$ is contained in $H_{k+1}$ (see \eqref{ker1}). Set
$$
V=\pi^{-1}_*(0),
$$
 In other words, $V$ is the vertical distribution on $\mathbb P(D^\perp)$. Note that $V$ has rank $1$, because the fibres of $\mathbb P(D^\perp)$ are homeomorphic to a circle. By definition of $V$  and relation \eqref{widesquare} we have
%\begin{proposition}\label{prop 2}
\begin{equation}
\label{VHC}
[V,H_{k+1}]\subset H_{k+1},\qquad  [V,H]\subset H,\qquad [C,H_{k+1}]\subset H.
\end{equation}
%\end{proposition}

An important observation is that for each $\lambda\in\mathbb P(D^\perp)$  the spaces $H_{k+1}(\lambda)$ and $H(\lambda)/H_{k+1}(\lambda)$ are equipped with the natural filtrations. The filtration on $H_{k+1}$ is described  by the following recursive formula
%Recall that from Lemma \ref{lemma 0} it follows that $H_{k+1}$ has its own filtration:
%$$
%L_0\subset L_1\subset\cdots\subset L_{k-1}\subset L_k=H_{k+1},
%$$
%where
\begin{equation}
\label{filt1}
L_0(\lambda)=V(\lambda)\oplus C(\lambda), \quad L_{i+1}(\lambda)=L_i(\lambda)+
[V,L_i](\lambda).
\end{equation}

Given a vector $v$ in a linear space denote by $[v]$ its equivalence class in the corresponding projective space.
Using \eqref{ker1} one gets easily that for any $\lambda=([\bar p],\bar q)\in \mathbb P (D^\perp)$ the projectivization of the space $\pi_* L_i$ coincides with the $i$-th osculating subspace at the point $[\bar p]$ to the curve $[p]\mapsto [X_p]$, $p\in D^\perp_0(\bar q)$  in $\mathbb P(D_{k+1}(q))$. Since the latter is a rational normal curve in $\mathbb P(D_{k+1}(q))$, we obtain the following filtration of $H_{k+1}(\lambda)$
\begin{equation}
\label{filtl}
L_0\subset L_1\subset\cdots\subset L_{k-1}\subset L_k=H_{k+1},
\end{equation}
where $\dim L_i(\lambda)=i+2$.

Now let us describe the natural filtration on the spaces $H(\lambda)/H_{k+1}(\lambda)$.
Recall that there is the canonical quasi-contact distribution $\Lambda$ on $\mathbb P(D^\perp)$ induced by the Liouville form $\varsigma$ on $T^*M$  %Denote it by $\Lambda$, i.e.
as follows
$$
\Lambda=\rm {pr}_*(\ker\varsigma)\subset T\mathbb P(D^\perp),
$$
where $\rm{pr}\colon D^\perp_0\to P(D^\perp)$ is the quotient mapping. Since $d\varsigma=\sigma$ and $C={\rm ker}\,\sigma|_{D^\perp}$, the distribution $C$
is the Cauchy characteristic of $\Lambda$, i.e. $[C,\Lambda]\subset \Lambda$ and $C$ is the maximal subdistribution with this property. Since by constructions $H\subset\Lambda$, it implies that
%\begin{proposition}\label{prop 3}
%$$
%\begin{equation}
%\label{CHL}
$[C,H]\subset \Lambda$.
%\end{equation}
%$$
%\end{proposition}

%The spaces $H_{k+1}/V$, $H/H_{k+1}$ and $\Lambda/H$ are well defined quotient vector bundles over $P(D^\perp)$.
If $h\in\Gamma(C)$ is a locally non-vanishing section of the characteristic distribution $C$ then by \eqref{VHC} the Lie brackets
%$ad h(\cdot)=
$[h,\cdot]$ at $\lambda$ define the following morphism
%of vector bundles
\begin{equation}
\label{morph}
%\ad_h\colon H_{k+1}(\lambda)/V(\lambda)\to H(\lambda)/H_{k+1}(\lambda),\qquad
\ad_h\colon H(\lambda)/H_{k+1}(\lambda)\to\Lambda/H(\lambda).
\end{equation}
%It follows from the assumption of maximal Kronecker index that
First note that this map
%$\ad_h\colon H/H_{k+1}\to\Lambda/H$
is onto. Otherwise, $\pi_* \bigl(C(\lambda)\bigr)$ is the common kernel for all forms $A^*(p)$, $p\in D^\perp\bigl(\pi(\lambda)\bigr)$,  of the pencil associated with $D$ at $\pi(\lambda)$, which contradicts the assumption of maximality of the first Kronecker index.
Note that %$\rk\,H_{k+1}/H=k+1$,
${\rm rank}\,H/H_{k+1}=k$ and ${\rm rank}\,\Lambda/H=1$. Therefore the kernel of $\ad_h\colon H/H_{k+1}\to\Lambda/H$ has rank $k-1$ and it defines a corank one subdistribution
$
K\subset H.
$
Note that the morphism in \eqref{morph} is multiplied by a nonzero constant, if one chooses another $h\in\Gamma(C)$. Therefore
the distribution $K$ does not depend on this choice.

We have a similar picture on the base manifold $M$. For any $p\in D^\perp(q)$ consider the morphism
${\rm ad}_{X_p}:D(q)\rightarrow {\rm ker}\, p$. Then the codimension of ${\rm ker}\, {\rm ad}_{X_p}$ in $D(q)$ is equal to $1$ and
%for $\lambda=(p,q)$ we have
$$\pi_* K(\lambda)={\rm ker}\, {\rm ad}_{X_p}, \quad \lambda=(p,q)\in \mathbb P (D^\perp).$$
Further, let $Y_p=({\rm ker}\, {\rm ad}_{X_p})/D_{k+1}(p)\subset D(q)/D_{k+1}(q)$ and
$$Z_p=\{\varphi\in (D(q)/D_{k+1}(q))^*: \varphi(Y_p)=0\}.$$
Note that $\dim Z_p=1$.
Then, using the normal form of the symbol from the item (4) of Proposition \ref{maxequiv}, it is not hard to get that the assignment $[p]\mapsto [Z_p]$  defines a \emph{rational normal curve} in $\mathbb P\bigl((D(q)/D_{k+1}(q))^*\bigr)$.

Now let us construct the filtration on $K$ inductively using kernels of natural mappings generated by the iterative brackets with $V$.
Namely, set $K_k=H$, $K_{k-1}=K$, and assume by induction that

\begin{equation}
\label{contr1} K_i(\lambda)=
%{\mathcal D}^{(i-1)}\Lambda (\tau)+
\left\{x\in  K_{i+1}(\lambda):
\begin{array}{l}
\exists\,X\in\Gamma(K_{i+1})\,\,
%\text {a vector field}\,\,
%{\mathcal V}\subset {\mathcal J}_{(i-1)}\,\,
\text {with}\,\, X(\lambda)=x\\
%{\mathcal V}(\lambda)=v
\text {such that}\,\,
%\bigr[{\mathcal C}, {\mathcal
%V}\bigl](\lambda)\in {\mathcal J}_{(i-1)}(\lambda),
[V, X](\lambda)\in
K_{i+1}(\lambda)
\end{array}\right\}, \quad i<k-1
\end{equation}
By constructions $H_{k+1}(\lambda)\subset K_i(\lambda)$ for any $i$.  Set $F_i(\lambda)=K_i(\lambda)/H_{k+1}(\lambda)$. It turns out that the $\pi_*K_i(\lambda)/D_{k+1}(q)$ can be described in terms of the $(k-1-i)$-th osculating space of the curve $[p]\mapsto [Z_p]$. Namely, $\pi_*K_i(\lambda)/D_{k+1}(q)$ is exactly the space of all vectors, annihilated by all elements of these osculating space
(recall that the latter space belong to $\mathbb P\bigl((D(q)/D_{k+1}(q))^*\bigr)$). Since the curve  $[p]\mapsto [Z_p]$ is the rational normal curve, we get that the flag
$\{F_i(\lambda)\}_{i=1}^{k-1}$ is complete, i.e.

\begin{equation}
\label{filtK}
0\subset F_1(\lambda)\subset\cdots\subset F_{k-1}(\lambda)\subset F_k =H(\lambda)/H_{k+1}(\lambda),\quad \dim F_i(\lambda)=i.
\end{equation}
To summarize, filtrations \eqref{filtl} and \eqref{filtK} are obtained with the help of osculating subspaces
to two rational normal curves: $[p]\rightarrow [X_p]$ in $\mathbb P D_{k+1}$ and $[p]\rightarrow [Z_p]$ in $\mathbb P\bigl((D(q)/D_{k+1}(q))^*\bigr)$.
%\begin{lemma}
%\label{filt2lem}
%$\dim L_i(\lambda)=i+2$
%\end{lemma}

%take any non-zero section $e_0\in\Gamma(V)$ and consider the action
%$$
%\ad_{e_0}\colon K\to H/K.
%$$
%This map is onto: if not then $[V,K]=K$ and $K$ projects to a rank $2k$ subdistribution $D_{2k}\subset D$ such that $[D_{k+1},D_{2k}]\subset D$ and this violate the assumtion that Kronecker index is maximal (compare Proof of Corollary \ref{cor 2}).
%Let $K_{k-2}$ be the kernel of $\ad_{e_0}\colon K\to H/K$ and consider
%$$
%\ad_{e_0}\colon K_{k-2}\to K/K_{k-2}.
%$$
%As before this map is onto and thus we have well defined kernel $K_{k-3}$ of $\ad_{e_0}\colon K_{k-2}\to K/K_{k-1}$. We can proceed further and by induction define:
%$$
%K_0\subset K_1\subset\cdots\subset K_{k-2}\subset K\subset H.
%$$
%Note that
%$$
%K_0=H_{k+1}.
%$$
%If we pass to the quotient space we get the following filtration of $H/H_{k+1}$:
%$$
%\{0\}\subset F_1\subset\cdots\subset F_{k-1}\subset F_k=H/H_{k+1},
%$$
%where $F_i=K_i/H_{k+1}$ for $i=1,\ldots,k-2$ and $F_{k-1}=K/H_{k+1}$. Note that $F_i$ has rank $i$.
%
%%Recall that from Lemma \ref{lemma 0} it follows that $H_{k+1}$ has its own filtration:
%%$$
%%L_0\subset L_1\subset\cdots\subset L_{k-1}\subset L_k=H_{k+1},
%%$$
%%where
%%$$
%%L_0=V\oplus C
%%$$
%%and $L_{i+1}=[V,L_i]$. In the quotient space we get:
%%$$
%%\{0\}\subset E_0\subset E_1\subset\cdots\subset E_{k-1}\subset E_k=H_{k+1}/V,
%%$$
%%where $E_i=L_i/V$ has rank $i+1$.

Till now we used assumption (G1) but not (G2). Now we will assume the following condition weaker than $(G_2)$: the distribution  $D_{k+1}$ is not integrable. From this we can extract an additional information from filtrations \eqref{filtl} and \eqref{filtK} in the form of certain integer-valued invariants, which will be important in the sequel. First let
\begin{equation}
\label{Ar}
\mathcal A_r(\lambda)=H_{k+1}(\lambda)+{\rm span}\{[L_s,L_t](\lambda): s+t\leq r, 0\leq s,t\leq k\}
\end{equation}
Obviously, $\mathcal A_r(\lambda)\subseteq\mathcal A_{r+1}(\lambda)$.
Since $D_{k+1}$ is not integrable there exists an integer $r$, $1\leq r\leq 2k-1$ such that
%$[L_s,L_t](\lambda)
$$\mathcal A_r(\lambda)\neq H_{k+1}(\lambda).$$ Let
%$$
%w(\lambda)=\min\{s+t\ |\ [L_s,L_t](\lambda)\neq H_{k+1}(\lambda)\}
%$$
$$w(\lambda)= \min\{r\ |\ \mathcal A_r(\lambda)\neq H_{k+1}(\lambda)\}, $$
and
$$
i(\lambda)=\min\{i\ |\ \mathcal A_{w(\lambda)}\subset K_i(\lambda)\}.
%\ A_{w(\lambda)}\nsubseteq K_{i-1}(\lambda)\}
%\  s+t=w(\lambda)\}.
$$

Given $q\in M$ let
$$
w_D(q)=\min\{w(\lambda)\ |\ \lambda\in \mathbb P(D^\perp)(q)\}
$$
and
$$
i_D(q)=\max\{i(\lambda)\ |\ \lambda\in \mathbb P(D^\perp)(q)\}
$$
The numbers $w_D(q)$ and $i_D(q)$ are integer-valued invariants of the distribution $D$ at $q$.
A point $q\in M$
is said to be \emph{regular} if $w_D$ and $i_D$ are constant in a neighborhood of $q$.
By constructions, the function $w(\lambda)$ is upper semicontinuous and the function $i(\lambda)$ is lower semicontinuous.
It implies that
the set of regular points is open and dense subset of $M$.
%From now on we assume that $w_D(q)\equiv w_D$ and $i_D(q)\equiv i_D$ for some integers $w_D$ and $i_D$
%%There exists an open and dense
Also let  $\mathcal R_1 =\{\lambda\in\mathbb P(D^\perp): w(\lambda)= w_D\bigl(\pi(\lambda)\bigr),
i(\lambda)= i_D\bigl(\pi(\lambda)\bigr)\}$.
%Then $\mathcal R_1$ is open and dense subset of $\mathbb P(D^\perp)$. Moreover,
Then the  intersection of $\mathcal R_1$ with any fiber of $\mathbb P(D^\perp)$ is open set in the Zariski topology of this fiber.
%on $\mathbb P(D^\perp)_{reg}$.
%The subset $\mathbb P(D^\perp)_{reg}$ is called \emph{regular} part of $\mathbb P(D^\perp)$.

We list several properties of the numbers $w_D$ and $i_D$.

\begin{lemma}\label{lemma 2}
The number $w_D$ is odd.
\end{lemma}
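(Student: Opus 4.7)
The plan is to exploit the Jacobi identity together with the antisymmetry of the Lie bracket to extract a parity constraint on $w_D$. Suppose for contradiction that $w_D$ is even. Restricting to a regular point $q$ and choosing $\lambda_0\in\mathbb P(D^\perp)(q)$ with $w(\lambda_0)=w_D$, upper semicontinuity of $w$ together with the constancy of $w_D$ near $q$ produces an open neighborhood $U$ of $\lambda_0$ in $\mathbb P(D^\perp)$ on which $w\equiv w_D$, so $\mathcal A_{w_D-1}=H_{k+1}$ throughout $U$. On $U$ I pick nonvanishing local sections $v\in\Gamma(V)$ and $h\in\Gamma(C)$. By induction on the recursion $L_{i+1}=L_i+[V,L_i]$, the subbundle $L_i$ is spanned by $v,h,\ad_v h,\ldots,\ad_v^i h$; since $[v,v]=[h,h]=0$ and $[v,\ad_v^a h]=\ad_v^{a+1}h\in H_{k+1}$, the only candidate generators of $[L_s,L_t]/H_{k+1}$ are the classes of $[\ad_v^a h,\ad_v^b h]$ with $a\le s,\,b\le t$, and among these only $(a,b)=(s,t)$ may fail to lie in $\mathcal A_{w_D-1}=H_{k+1}$ when $s+t=w_D$.

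The key computation is as follows. For $a+b=w_D$ and $a\ge 1$, the inner bracket of $[v,[\ad_v^{a-1}h,\ad_v^b h]]$ lies in $\mathcal A_{w_D-1}=H_{k+1}$, so by \eqref{VHC} the whole expression lies in $H_{k+1}$, and the Jacobi identity yields
\[
[\ad_v^a h,\ad_v^b h]\equiv -[\ad_v^{a-1}h,\ad_v^{b+1}h]\pmod{H_{k+1}}.
\]
Iterating this relation downward to $(0,w_D)$ gives $[\ad_v^a h,\ad_v^b h]\equiv(-1)^a[h,\ad_v^{w_D}h]$; iterating the symmetric relation (obtained by applying Jacobi with $b\ge 1$ instead) upward to $(w_D,0)$, and using $[\ad_v^{w_D}h,h]=-[h,\ad_v^{w_D}h]$, gives $[\ad_v^a h,\ad_v^b h]\equiv(-1)^{b+1}[h,\ad_v^{w_D}h]$.

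Equating the two expressions forces $\bigl((-1)^a+(-1)^b\bigr)[h,\ad_v^{w_D}h]\equiv 0\pmod{H_{k+1}}$. If $w_D=a+b$ is even, then $a\equiv b\pmod 2$, so the coefficient is $\pm 2\ne 0$; hence $[h,\ad_v^{w_D}h]\equiv 0\pmod{H_{k+1}}$, and consequently every generator $[\ad_v^a h,\ad_v^b h]$ with $a+b=w_D$ lies in $H_{k+1}$. But then $\mathcal A_{w_D}(\lambda_0)=H_{k+1}(\lambda_0)$, contradicting $w(\lambda_0)=w_D$, so $w_D$ must be odd. The main subtlety is justifying the Jacobi manipulation in an open neighborhood rather than pointwise, which is precisely where the semicontinuity of $w$ and the assumption of regularity of $q$ enter the argument.
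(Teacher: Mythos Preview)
Your proof is correct and follows essentially the same approach as the paper: derive the recursion $[\ad_v^a h,\ad_v^b h]\equiv -[\ad_v^{a-1}h,\ad_v^{b+1}h]\pmod{H_{k+1}}$ from the Jacobi identity (valid since $\mathcal A_{w_D-1}=H_{k+1}$ on an open set), and then use antisymmetry of the bracket to force everything to vanish when $w_D$ is even. The only cosmetic difference is that the paper exploits antisymmetry at the \emph{middle} of the chain (the term $[\ad_v^{w_D/2}h,\ad_v^{w_D/2}h]=0$) and propagates outward, whereas you exploit it at the \emph{ends} (via $[\ad_v^{w_D}h,h]=-[h,\ad_v^{w_D}h]$) and compare the two resulting expressions; these are equivalent manifestations of the same parity obstruction. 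Your treatment of the regularity/semicontinuity issue is in fact more explicit than the paper's.
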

\begin{proof}
Let $\ve$ be a section of $V$ and $h$ be a section of $C$. By \eqref{filt1} and \eqref{filtl}, the subspaces $L_i$ are spanned by vector fields $\ve, h,\ad_{\ve}h,\ldots,\ad_{\ve}^ih$. Assume that $[\ad^s_{\ve}h,\ad^{w-s-1}_{\ve}h]\in H_{k+1}$ for any $s=0,\ldots,\lfloor \frac{w-1}{2}\rfloor$ on an open set of the fiber of $\mathbb P(D^\perp)$. Applying $\ad_{\ve}$ and using the Jacobi identity we get
%new set of relations.
\begin{equation}
\label{addrel}
[\ad^{s+1}_{\ve}h,\ad^{w-s-1}_{\ve}h]+[\ad^s_{\ve}h,\ad^{w-s}_{\ve}h]\in H_{k+1}
\end{equation}
Assume that $w$ is even. Then substituting $s=\frac{w}{2}-1$ into \eqref{addrel}, we get that
$[\ad^{\frac{w}{2}-1}_{\ve}h,\ad^{\frac{w}{2}+1}_{\ve}h]\in H_{k+1}$. Then using \eqref{addrel} consecutively we get
$[\ad^s_{\ve}h,\ad^{w-s}_{\ve}h]=0\mod H_{k+1}$ for any $s=0,\ldots,\frac{w}{2}$. Therefore $w<w_D$ and thus $w_D$ can not be even.
\end{proof}

\begin{remark}
\label{0rem}
{\rm Note that from the similar arguments as in the previous lemma one can show that for every section $\ve$ of $V$ and $h$ of $C$ we have  $$[h,\ad^{w_D}_{\ve}h](\lambda)\equiv (-1)^s[\ad^s_{\ve}h,\ad^{w_D-s}_{\ve}h](\lambda)\,{\rm mod} H_{k+1}(\lambda), \quad \lambda \in \mathcal R_1.$$ This implies that
\begin{equation}
\label{dimA}
\dim A_{w_D}(\lambda)/H_{k+1}(\lambda)=1, \quad \lambda\in\mathcal R_1
\end{equation}$\Box$}
%$\mathbb R_1$
%if $\lambda\in \mathbb P(D^\perp)_{reg}$ then $[L_0, L_{w_D}](\lambda)\nsubseteq H_{k+1}(\lambda)$.
\end{remark}

\begin{lemma}\label{lemma 3}
If $i_D=1$ then $[D_{k+1},D_{k+1}]=D$.
%If $[D_{k+1},D_{k+1}]=D$ then $w_D\leq\frac{k+1}{2}$.
\end{lemma}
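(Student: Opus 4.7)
My approach is to sharpen the hypothesis $i_D=1$ into an equality $\mathcal{A}_{w_D}(\lambda)=K_1(\lambda)$ on a Zariski-dense subset of $\mathbb{P}(D^\perp)$, then project to $M$ via $\pi_*$ and exploit the rational-normal-curve structure in the fibre to sweep out all of $D(q)$. More precisely, I would restrict attention to $\lambda\in\mathcal{R}_1$, where $w(\lambda)=w_D$ and $i(\lambda)=i_D=1$; the latter is by definition the inclusion $\mathcal{A}_{w_D}(\lambda)\subset K_1(\lambda)$. Remark \ref{0rem} provides $\dim\mathcal{A}_{w_D}(\lambda)/H_{k+1}(\lambda)=1$, while \eqref{filtK} gives $\dim K_1(\lambda)/H_{k+1}(\lambda)=\dim F_1(\lambda)=1$, so the inclusion upgrades to the equality
\[
\mathcal{A}_{w_D}(\lambda)=K_1(\lambda),\qquad \lambda\in\mathcal{R}_1.
\]

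Next I would pass to $M$. The description of $K_i$ preceding \eqref{filtK} identifies $\pi_*K_1(\lambda)/D_{k+1}(q)$ with the annihilator in $D(q)/D_{k+1}(q)$ of the $(k-2)$-th osculating space to the rational normal curve $[p]\mapsto [Z_p]$ at the point $[p]$ corresponding to $\lambda$. As $\lambda$ ranges over the fibre $\mathbb{P}(D^\perp)(q)$, these annihilator lines themselves trace out a rational normal curve --- the projective dual of $[p]\mapsto[Z_p]$ --- in $\mathbb{P}\bigl(D(q)/D_{k+1}(q)\bigr)$, and a rational normal curve spans its ambient projective space. Together with the obvious inclusion $D_{k+1}(q)\subset\pi_*K_1(\lambda)$ this yields
\[
\sum_{\lambda\in\mathbb{P}(D^\perp)(q)}\pi_* K_1(\lambda)=D(q).
\]

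Finally I would verify $\pi_*\mathcal{A}_{w_D}(\lambda)\subset[D_{k+1},D_{k+1}](q)+D_{k+1}(q)$. Since $L_s\subset H_{k+1}=\pi^{-1}_*(D_{k+1})$, in local coordinates $(q,\xi)$ on $\mathbb{P}(D^\perp)$ the horizontal part of any $\tilde X\in\Gamma(H_{k+1})$, written as $a^j(q,\xi)\partial_{q^j}$, is for every frozen $\xi$ a local section of $D_{k+1}$. A direct calculation of $\pi_*[\tilde X_1,\tilde X_2]$ then splits the result into the ordinary bracket of two sections of $D_{k+1}$ at the frozen fibre coordinate, which lies in $[D_{k+1},D_{k+1}](q)$, together with $\xi$-derivative terms of the form $b^l\partial_{\xi^l}(a^j)\partial_{q^j}$, which remain pointwise in $D_{k+1}(q)$ because $D_{k+1}(q)$ is a linear subspace. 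Combining the three steps gives
\[
D(q)=\sum_\lambda\pi_*\mathcal{A}_{w_D}(\lambda)\subset[D_{k+1},D_{k+1}](q)+D_{k+1}(q),
\]
and the reverse inclusion is \eqref{widesquare}, which is precisely the asserted equality (interpreted as $D_{k+1}^{\,2}=D$).

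The main obstacle I expect is this last bracket-accounting step: the natural generators $\ve,h,\ad_\ve h,\ldots,\ad_\ve^k h$ of $\mathcal{A}_{w_D}$ are not $\pi$-projectable, so one must check directly that the fibre-dependence of their horizontal components contributes nothing outside $[D_{k+1},D_{k+1}]+D_{k+1}$; the linear-subspace observation for the $\xi$-derivative terms is what rescues the computation.
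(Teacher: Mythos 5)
Your proof is correct and follows essentially the same route as the paper: restrict to $\mathcal R_1$, upgrade the inclusion $\mathcal A_{w_D}(\lambda)\subset K_1(\lambda)$ to an equality by the dimension count from Remark~\ref{0rem} and \eqref{filtK}, and use the rational-normal-curve structure of $[p]\mapsto[\pi_*K_1(p,q)/D_{k+1}(q)]$ to see that the spaces $\pi_*K_1(\lambda)$ span $D(q)$ as $\lambda$ runs over a Zariski-dense subset of the fibre. The only thing you spell out that the paper's closing ``Hence'' leaves implicit is the bracket-accounting step $\pi_*\bigl[\Gamma(H_{k+1}),\Gamma(H_{k+1})\bigr]\subset[D_{k+1},D_{k+1}]+D_{k+1}$, and your coordinate computation for it (ordinary bracket at frozen fibre coordinate plus fibre-derivative terms staying in the linear subspace $D_{k+1}(q)$) is sound.
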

\begin{proof}
If $i_D=1$ then $K_1(\lambda)\subset[H_{k+1},H_{k+1}](\lambda)$ for every $\lambda\in P(D^\perp)$. It follows from the constructions that $$\spn\Bigl\{\displaystyle{\bigcup_{\lambda\in \mathbb P(D^\perp)\bigl(\pi(\lambda)\bigr)}\pi_*(K_1(\lambda))}\Bigr\}= D(\pi(\lambda))$$ (as a matter of fact, the curve $[p]\mapsto [\pi_*(K_1(p,q))/D_{k+1}(q)]$, $p\in D^\perp(q)$, is a rational normal curve in $\mathbb P\bigl(D(q)/D_{k+1}(q)\bigr)$).
Hence $[D_{k+1},D_{k+1}]=D$.
%If $[D_{k+1}, D_{k+1}]=D$ then TODO.
\end{proof}

\begin{lemma}\label{lemma 1}
If $w_D=1$ then $i_D\neq k-1$.
\end{lemma}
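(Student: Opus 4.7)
I argue by contradiction: suppose $w_D=1$ and $i_D=k-1$. Fix a regular $q\in M$ and $\lambda\in\mathcal R_1$ over $q$; by openness of $\mathcal R_1$ in the fiber, $[h,\ad_\ve h](\lambda)$ lies in $K_{k-1}(\lambda)\setminus K_{k-2}(\lambda)\pmod{H_{k+1}(\lambda)}$ for all $\lambda$ in an open dense subset of $\mathbb P(D^\perp)(q)$. The strategy is to translate both membership conditions into polynomial identities on the fiber, then use the Jacobi identity for brackets of sections of $D_{k+1}$ on $M$ to show that the first identity forces the second, yielding the contradiction.

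Parametrize $\mathbb P(D^\perp)(q)\cong\mathbb{RP}^1$ by $t=-\beta/\alpha$ in the basis from item~(4) of Proposition~\ref{maxequiv}, so that $X_{p(t)}=\sum_{i=0}^k t^{k-i}\bx_i$ (up to scale) and $Z_{p(t)}=\sum_{l=1}^k t^{l-1}\by_l^*$. Since $\pi_*h\propto X_{p(t)}$, $\pi_*\ad_\ve h\propto \dot X_{p(t)}$, and the class $[\bx,\by]\bmod D_{k+1}$ is well defined for $\bx,\by\in D_{k+1}$, one has $\pi_*[h,\ad_\ve h]\equiv [X_{p(t)},\dot X_{p(t)}]\pmod{D_{k+1}}$. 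Writing $c(t):=[X_{p(t)},\dot X_{p(t)}]\bmod D_{k+1}=\sum_l c_l(t)\by_l$, the osculating description of $F_i=K_i/H_{k+1}$ shows that $[h,\ad_\ve h]\in K_{k-1}\pmod{H_{k+1}}$ on an open set amounts to the polynomial identity $\sum_l c_l(t)t^{l-1}\equiv 0$, while $[h,\ad_\ve h]\in K_{k-2}\pmod{H_{k+1}}$ requires in addition $\sum_l (l-1)c_l(t)t^{l-2}\equiv 0$. Differentiating the first identity in $t$ reduces the second to the equivalent condition $\sum_l c_l'(t)t^{l-1}\equiv 0$, where $c_l'(t)$ is the $\by_l$-coefficient of $[X_{p(t)},\ddot X_{p(t)}]\bmod D_{k+1}$.

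Expanding $c_l(t)$ and $c_l'(t)$ in powers of $t$ expresses their coefficients as linear forms in the structure constants $a^l_{ij}:=(\by_l\text{-coefficient of }[\bx_i,\bx_j]\bmod D_{k+1})$; the identity $\sum_l c_l(t)t^{l-1}\equiv 0$ then becomes a finite linear system on the $a^l_{ij}$. The Jacobi identity $\sum_{\mathrm{cyc}}[\bx_a,[\bx_b,\bx_c]]=0$ in $\mathfrak X(M)$, projected modulo $D$ via the symbol formulas $[\bx_i,\by_l]\equiv \delta_{i+l,k}\bz+\delta_{i+l,k+1}\bn\pmod D$, yields for each triple $a<b<c$ two further linear relations on the $a^l_{ij}$. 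Combining these Jacobi relations with the system coming from the $F_{k-1}$ identity forces $\sum_l c_l'(t)t^{l-1}\equiv 0$ as well, giving $[h,\ad_\ve h]\in K_{k-2}\pmod{H_{k+1}}$ on an open set and contradicting $i(\lambda)=k-1$. The principal obstacle is this last combinatorial step: for $k=3$ it is a short linear-algebra computation solving the polynomial system together with the Jacobi relations, whereas for general $k$ the natural organization is to decompose $\wedge^2\mathcal V_k$ and $\mathcal V_{k-1}$ into $\mathfrak{sl}_2(\mathbb R)$-isotypic components via \eqref{wedge}, exploiting that both the polynomial identity and the Jacobi relations are equivariant under the $\mathrm{PGL}_2$-reparametrization of $t$ and so reduce to statements about the projections $\tau_{k,s}$.
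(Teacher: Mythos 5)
Your reduction to polynomial identities on the fiber is set up correctly, but the proof has a genuine gap at exactly the point you flag as the ``principal obstacle.'' You correctly translate $w_D=1$, $i_D=k-1$ into the two conditions: $\langle Z_{p(t)},[X_{p(t)},\dot X_{p(t)}]\rangle\equiv0$ (holding by assumption) and $\langle Z_{p(t)},[X_{p(t)},\ddot X_{p(t)}]\rangle\equiv0$ (what you need to derive), and you correctly observe that the membership $[X,Y](q)\bmod D_{k+1}(q)$ depends only on the pointwise values. But the decisive step — that the finite linear system on the structure constants $a^l_{ij}$ coming from the first identity, combined with the Jacobi relations projected modulo $D$, actually forces the second identity — is simply asserted. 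Your last paragraph concedes this is where the work lies and only sketches a possible $\mathfrak{sl}_2$-equivariant organization. Without the combinatorial or representation-theoretic verification, the contradiction with $i(\lambda)=k-1$ does not follow, so the argument is incomplete.

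The paper's proof takes a quite different and shorter route that avoids the fiberwise polynomial algebra entirely. Staying on $\mathbb P(D^\perp)$, it sets $\ve_1=[h,\ve]$, $\ve_2=[h,\ve_1]$, assumes $\ve_2\in K$ but $[\ve,\ve_2]\notin K$, and lets $Y=[h,[\ve,\ve_2]]$, which then spans $\Lambda$ modulo $H$. Three applications of the Jacobi identity, all modulo $\Lambda$, yield $[h,[\ve_1,[\ve,\ve_1]]]\equiv -3[\ve,Y]$; since the left side is a multiple of $Y$ (as $[\ve_1,[\ve,\ve_1]]\in\Gamma(H)$), one concludes $[\ve,Y]\equiv0\bmod\Lambda$. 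But this says $\ve$ is a Cauchy characteristic of the canonical quasi-contact distribution $\Lambda$, hence a section of $C$, which is false. The key structural input is thus the Liouville/quasi-contact geometry and the characterization of $C$ as the Cauchy characteristic of $\Lambda$, rather than the symbol normal form and $\mathfrak{sl}_2$-decompositions you invoke. If you want to pursue your approach, the $\mathrm{PGL}_2$-equivariance idea is sound and could in principle be made rigorous, but the combinatorics would need to be carried out; the paper's quasi-contact argument is what lets one bypass it.
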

\begin{proof}
Let $\ve$ be a section of $V$ and $h$ be a section of $C$. Denote
$$
\ve_1=[h,\ve],\qquad \ve_2=[h,\ve_1].
$$
First of all notice that $\ve_2$ is a section of $H\setminus H_{k+1}$ since $w_D=1$. By \eqref{VHC} $[\ve,\ve_2]\in\Gamma(H)$. Our aim is to prove that $[\ve,\ve_2]\in K$ provided that $\ve_2\in K$. By definition of the spaces $K_i$,  this statement implies that $i_D\neq k-1$). Assume that $\ve_2\in K$ and let $Y=[h,[\ve,\ve_2]]$. We will prove that $Y\in\Gamma(H)$.

If $Y\notin\Gamma(H)$ then $Y$ spans $\Lambda$ modulo $H$. We will show that it leads to the contradiction. The Jacobi identity and \eqref{VHC} imply that
$$
[\ve_1,\ve_2]=[h,[\ve,\ve_2]]=Y\mod H,
$$
since $[h,e_2]=0\mod H$. Moreover we have
$$
[\ve_1,[\ve,\ve_2]]=[[h,\ve],[\ve,\ve_2]]=-[\ve,Y] \mod\Lambda
$$
and from above we obtain
$$
[\ve_2,[\ve,\ve_1]]=[\ve_1,[\ve,\ve_2]]-[\ve,[\ve_1,\ve_2]]=-2[\ve,Y] \mod\Lambda.
$$
We use the Jaccobi identity once again and we get
$$
[h, [\ve_1,[\ve,\ve_1]]]=[\ve_2,[\ve,\ve_1]]+ [\ve_1,[\ve,\ve_2]]=-3[\ve, Y] \mod\Lambda.
$$
On the other hand $[\ve_1,[\ve,\ve_1]]\in\Gamma(H)$ and therefore $[h, [\ve_1,[\ve,\ve_1]]]=cY$ for some function $c$. We conclude that
$$
[\ve,Y]=0\mod\Lambda.
$$
But it means $\ve$ is a Cauchy characteristic vector field of $\Lambda$, i.e. $[\ve,\Lambda]\subset\Lambda$. It implies that $\ve$ is a section of $C$, which is not the case.
 %is constant in the fiber of $P(D^\perp)$ which is not true.
 Thus we get the contradiction and the proof is completed.
\end{proof}

\begin{corollary}
\label{cor 2}
If $D$ is a $(5,7)$-distribution with maximal Kronecker index then $i_D=2$ or $D_3$ is integrable.
\end{corollary}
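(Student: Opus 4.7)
The plan is to show that if $D_3$ is not integrable then $i_D=2$; the other case is trivial.

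First I would observe the elementary fact that under \eqref{widesquare} one always has $\mathcal A_r\subset H$, while by definition of $w_D$ one always has $\mathcal A_{w_D}\not\subset H_3=K_0$, so that the filtration $K_0\subsetneq K_1\subsetneq K_2=H$ forces $i_D\in\{1,2\}$. Assuming $D_3$ non-integrable, $w_D$ is a well defined integer with $1\le w_D\le 2k-1=3$ and, by Lemma \ref{lemma 2}, odd; so $w_D\in\{1,3\}$. If $w_D=1$ then Lemma \ref{lemma 1} gives $i_D\ne k-1=1$, hence $i_D=2$, and we are done.

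The heart of the proof is therefore to exclude $w_D=3$. The key point is that for $k=2$ the only pair $(a,b)$ with $0\le a\le b\le k$ and $a+b=3$ is $(1,2)$; so under $w_D=3$ (giving $\mathcal A_2=H_3$), the quotient $\mathcal A_3/H_3$ is spanned, modulo $H_3$, by the single bracket $[\ad_\ve h,\ad_\ve^2 h]$ where $\ve\in\Gamma(V),\ h\in\Gamma(C)$. By Remark \ref{0rem} this quotient is $1$-dimensional on $\mathcal R_1$. I would then use the Jacobi identity
\[
\ad_\ve[h,\ad_\ve^2 h]=[\ad_\ve h,\ad_\ve^2 h]+[h,\ad_\ve^3 h]
\]
to show the generator already lies in $H_3$. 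From $w_D=3$, $[h,\ad_\ve^2 h]\in\mathcal A_2=H_3$, and \eqref{VHC} places the left-hand side in $[V,H_3]\subset H_3$. Furthermore, since \eqref{VHC} forces the filtration $L_i$ to stabilize at $L_k=H_{k+1}$, one has $\ad_\ve^3 h\in L_3=L_2=H_3$, so $\ad_\ve^3 h$ is a section of $L_2$; hence $[h,\ad_\ve^3 h]\in[L_0,L_2]\subset\mathcal A_2=H_3$. Combining both, $[\ad_\ve h,\ad_\ve^2 h]\in H_3$, which forces $\mathcal A_3=H_3$ and contradicts $w_D=3$.

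The only nontrivial step I expect to require care is the observation that $\ad_\ve^3 h$ lies already in $L_2$ and not merely in $H$, so that its bracket with $h$ is confined to the controlled space $\mathcal A_2$ rather than the strictly larger $[C,H_3]\subset H$ coming only from \eqref{VHC}. With that in hand the above yields $w_D=1$, and Lemma \ref{lemma 1} closes the argument.
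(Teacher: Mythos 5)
Your proof is correct. It uses the same underlying mechanism as the paper's --- Jacobi-identity propagation along the filtration $L_0\subset L_1\subset L_2=H_3$, followed by Lemma \ref{lemma 1} to exclude $i_D=1$ --- but organizes the case analysis differently. The paper argues directly that $w_D>1$ forces $H_3$, and hence $D_3$, to be integrable, applying $\ad_\ve$ twice to the single starting relation $[h,\ve_1]\in\Gamma(H_3)$; in particular it never invokes the parity Lemma \ref{lemma 2}. You instead call on Lemma \ref{lemma 2} at the outset to whittle the alternative down to the single case $w_D=3$, which hands you the stronger starting relation $\mathcal A_2=H_3$ rather than $\mathcal A_1=H_3$ and thereby shortens the Jacobi chain to one identity. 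Your crucial observation --- that $\ad_\ve^3 h$ is already a section of $L_2=H_3$ because $L_k=H_{k+1}$ is $\ad_\ve$-invariant by \eqref{VHC}, so that $[h,\ad_\ve^3 h]$ lands in $[L_0,L_2]\subset\mathcal A_2=H_3$ rather than merely in $[C,H_3]\subset H$ --- is exactly the step that makes the shortcut close, and you flag it appropriately. The trade-off is clean: the paper's route is self-contained but needs an extra bracket iteration; yours is shorter per step but imports the oddity of $w_D$. Both are valid routes to the contradiction, and both finish identically via Lemma \ref{lemma 1}.
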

\begin{proof}
Let $h,\ve,\ve_1,\ve_2$ be as in the proof of Lemma \ref{lemma 1}.
%If $D_3$ is not integrable then
%$[h,\ve_1]\notin\Gamma(H_3)$ i.e.
%then $w_D=1$.
%Indeed, assume the converse, i.e.
First let us prove that if $D_3$ is not integrable then $w_D>1$ then $D_3$ is integrable. Assume the converse, i.e. that
\begin{equation}
\label{he1H3}
[h,\ve_1]\in\Gamma(H_3).
\end{equation}
Applying $\ad_\ve$ to the last relation and using Jacobi identity, we get that
\begin{equation}
\label{he1e1H3}
[h,[\ve,\ve_1]]\in \Gamma(H_3).
\end{equation}
Applying  $\ad_\ve$ and the Jacobi identity once more we get that
\begin{equation}
\label{ade2}
-[\ve_1,[\ve,\ve_1]]+[h,[\ve,[\ve,\ve_1]]]\in \Gamma(H_3)
\end{equation}
On the other hand, $[\ve,[\ve,\ve_1]]\in H_3={\rm span}\{h, \ve,\ve_1,[\ve,\ve_1]\}$, which together with \eqref{he1H3} and \eqref{he1e1H3} implies that $[h,[\ve,[\ve,\ve_1]]]\in \Gamma(H_3)$. Hence \eqref{ade2} implies that $[\ve_1,[\ve,\ve_1]]\in \Gamma(H_3)$. So, $H_3$ is integrable and therefore $D_3$ is integrable. We get the contradiction.

Thus if $D_3$ is not integrable, then $w_D=1$.  Since $k=2$,  by the previous
lemma $i_D\neq 1$ therefore $i_D$ has to be equal to $2$. This completes the proof of the lemma.
\end{proof}

Further let $\widetilde {\mathcal R_2}$ be a subset of $\mathcal R_1$ consisting of all points $\lambda$ such that for any $r$ the dimension of subspaces $A_r$ is constant in a neighborhood of $\lambda$ (in $\mathcal R_1$). Obviously, $\widetilde {\mathcal R_2}$ is an open and dense subset of $\mathbb P(D^\perp)$.
Now we will assume that the condition (G2) holds, i.e. $D_{k+1}^2=D$. Then for any $\lambda$ there exists an integer $r$ such that
$K_1(\lambda)\subset \mathcal A_r(\lambda)$. Clearly $r(\lambda)\geq w(\lambda)$. Let
\begin{equation}
\label{rl}
r(\lambda)=\min\{r: K_1(\lambda)\subset \mathcal A_r(\lambda)\}.
\end{equation}
Note that the function $r(\lambda)$ is lower semicontinuous on the set $\widetilde {\mathcal R_2}$. Therefore, if $ \mathcal R_2$ denotes a subset of $\widetilde{\mathcal R_2}$ consisting of all points $\lambda$ such that $r(\lambda)$ is constant in a neighborhood of $\lambda$, then $\mathcal R_2$ is open and dense in $\mathbb P (D^\perp)$. Note that  the intersection of $\mathcal R_2$ with any fiber of $\mathbb P(D^\perp)(q)$ for any $q\in \pi(\mathcal R_2)$ is an open set in the Zariski topology of this fiber.

%Finally for any $q\in \pi( \widetilde {\mathcal R_2})$ denote
%$$r_D(q)=\max\{ r(\lambda):\lambda\in \mathbb P (D^\perp)(q)\}.$$
%%of $h\in\Gamma(C)$
%Then $r(\lambda)=r_D\bigl(\pi(\lambda)\bigr)$ for any $\lambda\in\mathcal R_2$.
%The numbers $r_D(q)$ are integer-valued invariants of the distribution $D$ at $q$.
%A regular point $q\in $
%is said to be \emph{ strongly regular} if $r_D$ is constant in a neighborhood of $q$.
%It is clear that
%the set of strongly regular points is open and dense subset of $M$.
%If $r_D(q)\equiv r_D$ for some integer $r_D$ then
%There exists an open and dense
% Let $\mathcal R_2 =\{\lambda\in\widetilde{\mathcal R_2}: r(\lambda)= r_D(\pi(\lambda))\}$.
 %Then $\mathcal R_2$ is open and dense subset of $\mathbb P(D^\perp)$. Moreover,

\section{Construction of canonical frames}
\setcounter{equation}{0}
\setcounter{theorem}{0}
\setcounter{lemma}{0}
\setcounter{proposition}{0}
\setcounter{corollary}{0}
\setcounter{remark}{0}
%Let us assume that $D$ has maximal Kronecker index.

Now we formulate and prove our main result on the frames for distributions from the considered class.
%with completely non-holonomic $D_{k+1}$.
\begin{theorem}\label{thm 2}
Assume that a $(2k+1,2k+3)$-distribution $D$ with $k>1$ has the maximal first Kronecker index and the square of the subdistribution $D_{k+1}$ is equal to the distribution $D$.
Let $\mathcal R_1$ and $\mathcal R_2$ be the open dense subsets of $\mathbb P(D^\perp)$ defined in the previous sections.

 %distribution with constant integer-valued invariants $w_D(q)\equiv w_D$ and $i_D(q)\equiv i_D$.
\begin{enumerate}
%\item If $w_D\equiv \frac{k+1}{2}$ and $i_D\equiv 1$, then there exists a canonical frame on rank 3 bundle over $\mathcal R_1$;
\item If $w_D$
%is a constant, which
is not equal to $\frac{k+1}{2}$, and $i_D\equiv 1$, then there exists a canonical frame on rank 2 bundle over $\mathcal R_1$;
\item If $w_D\equiv \frac{k+1}{2}$ and $i_D\equiv 1$, then there exists a canonical frame on rank 3 bundle over $\mathcal R_1$;
\item If $i_D$ is
%constant and
greater than $1$ then there exists a canonical frame on rank 1 bundle over a neighborhood  of any point of $\mathcal R_2$.
%\in P(D^\perp)_{reg}$.
\end{enumerate}
Two corank 2 distributions $D$ and $D'$ satisfying conditions (G1) and (G2) are equivalent if and only if there is a diffeomorphism (of the corresponding bundles) sending the canonical frame of $D$ to the canonical frame of $D'$.

\end{theorem}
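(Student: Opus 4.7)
The plan is to execute a Tanaka-style reduction of structure on the pseudo-product $(V,C)$ on $\mathbb{P}(D^\perp)$ introduced in Section 2. The symplectification has already translated the equivalence problem for $D$ into the equivalence problem for this pair of line fields together with the two canonical rational-normal-curve structures that produce the filtrations $\{L_i\}$ on $H_{k+1}$ and $\{F_j\}$ on $H/H_{k+1}$. My task is to choose local sections $\varepsilon\in\Gamma(V)$ and $h\in\Gamma(C)$, derive the associated frame of $T\mathbb{P}(D^\perp)$ by iterated brackets $\ad_\varepsilon^j h$ (completed by lifts of sections of $F_j$), read off the structure functions, and show that in each of the three cases exactly the prescribed number of scaling parameters among $(a,b)$ in $\varepsilon\mapsto a\varepsilon$, $h\mapsto b h$ can be fixed by normalizing these structure functions to universal constants.

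The anchor of the procedure is the bracket $[h,\ad_\varepsilon^{w_D}h]$. By Remark 3.1 it is, modulo $H_{k+1}$, a non-vanishing section of the line bundle $\mathcal{A}_{w_D}/H_{k+1}$ on $\mathcal{R}_1$; prescribing its coefficient to equal a fixed universal constant eliminates one of the two scaling parameters and leaves, a priori, a rank-$2$ bundle of admissible $(\varepsilon,h)$. In case (3), where $i_D>1$, the line $\mathcal{A}_{w_D}/H_{k+1}$ sits inside the proper subbundle $K_{i_D}/H_{k+1}$, so the projection of $[h,\ad_\varepsilon^{w_D}h]$ modulo $K_{i_D-1}$ furnishes a second independent $\mathbb{R}^\times$-equivariant structure function, of a different weight in $(a,b)$, which is non-zero precisely on $\mathcal{R}_2$; normalizing it fixes the remaining scaling and yields the canonical frame on a rank-$1$ bundle over a neighborhood of any point of $\mathcal{R}_2$. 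In cases (1) and (2) this flag-based normalization is unavailable since $\mathcal{A}_{w_D}$ already reaches $K_1$, so one must instead examine the higher brackets $[\ad_\varepsilon^s h,\ad_\varepsilon^{w_D+1-s}h]$, whose classes modulo $\mathcal{A}_{w_D}$ live in $\mathcal{A}_{w_D+1}/\mathcal{A}_{w_D}$.

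Precisely here appears the dichotomy between (1) and (2), which I expect to be the main obstacle. The coefficients of these higher brackets satisfy the cyclic identity obtained by applying $\ad_\varepsilon$ to \eqref{addrel}. When $w_D\neq(k+1)/2$ one solves the resulting linear system and extracts a single independent invariant carrying a non-trivial weight in $(a,b)$, delivering the second normalization and hence the rank-$2$ bundle of case (1). When $w_D=(k+1)/2$ the indices in $[\ad_\varepsilon^s h,\ad_\varepsilon^{w_D+1-s}h]$ become balanced at $s=(w_D+1)/2$: antisymmetry annihilates one equation of the system, the cyclic identity forces the remaining coefficients to be mutually proportional, and no non-trivial weighted monomial in $(a,b)$ is pinned down—so an extra scaling parameter survives, producing the rank-$3$ bundle of case (2). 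Carrying out this linear-algebraic bookkeeping, keeping track of the $\mathfrak{sl}_2$-type weights induced along the rational normal curve, is the technical heart of the argument.

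Once the scaling freedom has been fixed on the appropriate open subset, every remaining frame element on the reduced bundle is uniquely determined, producing a genuine canonical frame. The equivalence statement is then standard: any local diffeomorphism $\Phi\colon(M,D)\to(M',D')$ canonically lifts to a diffeomorphism $\mathbb{P}(D^\perp)\to\mathbb{P}((D')^\perp)$ preserving $V$, $C$ and all canonical filtrations, hence further lifts to the reduced bundles; since the two canonical frames are characterized by identical normalization equations, the lifted map intertwines them. Conversely, a frame-preserving diffeomorphism of the reduced bundles projects to $\mathbb{P}(D^\perp)$ preserving $V$, and therefore descends to a diffeomorphism of $M$ sending $D$ to $D'$.
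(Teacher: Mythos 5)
Your high-level plan---symplectify, work with the pseudo-product $(V,C)$ on $\mathbb{P}(D^\perp)$, build the frame from iterated brackets of lifted sections, and normalize structure functions to kill the fiber freedom---is indeed the strategy of the paper. But the specific normalization scheme you describe and, more importantly, the mechanism you propose for the $(1)$/$(2)$ dichotomy do not match how the argument actually has to go, and in places would not work.

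The paper does not normalize the coefficient of $[h,\ad_\varepsilon^{w_D}h]$ to a universal constant to fix the lift of $V$. The two conditions that determine the $V$-lift are torsion-vanishing conditions of the form $\ad_E^{k+1}\mathfrak{H}\equiv 0 \bmod \widehat L_{k-1}$ and $\ad_E^{k}G\equiv 0 \bmod \widehat K_{k-1}$: the first comes from the rational normal curve $[p]\mapsto[X_p]$, the second from the dual rational normal curve $[p]\mapsto[Z_p]$. Under the fiber transformation $(E,\mathfrak H)\mapsto(E+\alpha\ba+\beta\bb,\mathfrak H+\gamma\ba+\delta\bb)$ these yield the two affine equations $\tfrac{k}{2}\alpha+\beta=-\tfrac{\eta}{k+1}$ and $(\tfrac{k-1}{2}+w_D)\alpha+2\beta=-\tfrac{\upsilon}{k}$, whose $2\times 2$ matrix has determinant $\tfrac{k+1}{2}-w_D$. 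That determinant vanishing is \emph{exactly} what forces the passage to the rank-$3$ bundle $B_2$ in case (2). Your proposed dichotomy---that the indices in $[\ad_\varepsilon^s h,\ad_\varepsilon^{w_D+1-s}h]$ ``become balanced at $s=(w_D+1)/2$'' and antisymmetry kills an equation---is not what happens: $w_D$ is always odd (Lemma 2.1), so that balance point exists for every $w_D$ and does not single out $w_D=\tfrac{k+1}{2}$ at all. The degeneracy at $w_D=\tfrac{k+1}{2}$ has nothing to do with antisymmetry of a symmetric bracket; it is a resonance between the weights of $\eta$ and $\upsilon$ under the $(\ba,\bb)$-action, visible only after you compute both torsion functions.

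Separately, in case (3) the extra normalization does not come from reading off the class of $[h,\ad_\varepsilon^{w_D}h]$ in $K_{i_D}/K_{i_D-1}$ (that class is essentially the definition of $G$, so it yields no new invariant). It comes from expanding $G$ in the \emph{quadratic} flag $\mathcal A_{r-1}\subset\mathcal A_r$ with $r=r(\lambda)\ge w_D$ defined in \eqref{rl}, where the coefficient $c_{\bar s}$ transforms with weight $a^{\,r-w_D+i_D-1}$; it is the strict positivity of this exponent when $i_D>1$ (and its vanishing when $i_D=1$, since then $r=w_D$) that creates the split between case (3) and cases (1)--(2). You would need $\mathcal R_2$ and the function $r(\lambda)$ to even state the normalization condition, which your sketch omits. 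Finally, your proposal essentially stops after fixing $\varepsilon$; it does not address the separate and nontrivial problem of fixing the lift of $C$ (the paper's Step 2), which requires yet more normalization conditions, another near-degenerate case $k+w_D=6$ that must be excluded by a bracket computation, and a subtler condition (\eqref{cond 7}) on the $\bc$-freedom when $w_D=\tfrac{k+1}{2}$. As written, the proposal is not a proof; it identifies the right geometric stage but proposes normalizations with the wrong source and a dichotomy argument that would not distinguish $w_D=\tfrac{k+1}{2}$ from other odd values.
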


Note that Lemma \ref{lemma 2} implies that $w_D=\frac{k+1}{2}$ does not hold unless $k\equiv 1\mod 4$.
 If we take into account Corollary \ref{cor 2}, then Theorem \ref{thm 2} implies immediately the following
 %we get that if $D$ is $(5,7)$-distributions with maximal Kronecker index and non-integrable $D_{k+1}$ then always $i_D=2$ and additionally $[D_{k+1},D_{k+1}]=D$. In particular $D_{k+1}$ is completely non-holonomic, because $[D,D]=TM$.
\begin{corollary}\label{cor 3}
 Assume that a $(2k+1,2k+3)$-distribution $D$ has the maximal first Kronecker index and the square of the subdistribution $D_{k+1}$ is equal to the distribution $D$. Then the dimension of its algebra of infinitesimal symmetries does not exceed
 \begin{enumerate}
 \item $2k+6$, if $k\nequiv 1 \mod 4$ and $k>2$;
 \item $2k+7$, if $k\equiv 1\mod 4$ and $k>1$;
 \item $9$, if $k=2$.
 \end{enumerate}
 \end{corollary}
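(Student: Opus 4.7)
The plan is to derive the three bounds by a case analysis that converts each part of Theorem \ref{thm 2} into a numerical bound on the symmetry algebra, and then uses Lemma \ref{lemma 2} and Corollary \ref{cor 2} to rule out cases depending on $k$. The general principle I invoke throughout is standard: whenever a canonical (absolute) frame lives on a rank $N$ bundle $\mathcal E$ over an open subset of $\mathbb P(D^\perp)$, any infinitesimal symmetry of $D$ lifts uniquely to a vector field on $\mathcal E$ preserving the frame, and vector fields preserving an absolute parallelism form a Lie algebra of dimension at most $\dim \mathcal E$. Since $D$ has corank $2$, the annihilator $D^\perp$ has rank $2$, its projectivization has one-dimensional fibers, and $\dim \mathbb P(D^\perp)=2k+4$. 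Consequently cases (1), (2), (3) of Theorem \ref{thm 2} yield bounds $2k+6$, $2k+7$, $2k+5$ respectively.

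Next I would split on the residue of $k$ modulo $4$. For $k\not\equiv 1\mod 4$ with $k>2$, case (2) of Theorem \ref{thm 2} is impossible: it requires $w_D=\frac{k+1}{2}$, but by Lemma \ref{lemma 2} the integer $w_D$ is odd, so $\frac{k+1}{2}$ would have to be an odd integer, forcing $k\equiv 1\mod 4$ and contradicting the hypothesis. Only cases (1) and (3) remain, yielding the uniform bound $\max(2k+6,2k+5)=2k+6$. For $k\equiv 1\mod 4$ with $k>1$ all three cases are admissible, and the corresponding uniform bound is $\max(2k+6,2k+7,2k+5)=2k+7$.

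Finally, the case $k=2$ is handled by Corollary \ref{cor 2}. Assumption (G2) gives $D_3^2=D\supsetneq D_3$, so $D_3$ is not integrable; Corollary \ref{cor 2} then forces $i_D=2>1$. Therefore cases (1) and (2) of Theorem \ref{thm 2} are excluded and only case (3) applies, producing the sharper bound $2\cdot 2+5=9$, which matches the one given in item (3) of the corollary.

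The main conceptual work is concentrated entirely in Theorem \ref{thm 2}; the corollary itself is a bookkeeping exercise. The only step with any subtlety is recognising that Lemma \ref{lemma 2} precisely characterises when the exceptional case (2)---the one gaining an extra dimension of symmetry---can arise, which is also why the maximally symmetric model jumps from $2k+6$ to $2k+7$ exactly when $k\equiv 1\mod 4$, in agreement with the $\mathfrak{sl}_2(\mathbb R)$-theoretic construction outlined in the introduction.
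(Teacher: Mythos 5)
Your proof is correct and follows exactly the route the paper intends: you deduce the dimension bounds from the ranks of the bundles in Theorem \ref{thm 2} over $\mathbb P(D^\perp)$ (of dimension $2k+4$), use Lemma \ref{lemma 2} (oddness of $w_D$) to exclude case (2) unless $k\equiv 1\mod 4$, and use Corollary \ref{cor 2} to force $i_D>1$ when $k=2$ so that only the rank-1 bundle occurs. The paper merely asserts these implications without spelling them out; you have supplied the same argument in detail.
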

 In section 4 we show that the upper bounds for the algebra of infinitesimal symmetries from the previous Corollary are sharp and describe all corank 2 distributions from the considered class for which these upper bounds are attained.
 %there exists a canonical frame on rank 3 bundle over $P(D^\perp)$ if $k\equiv 1\mod 4$ and there exists a canonical frame on %rank 2 bundle over $P(D^\perp)$ if $k\nequiv 1\mod 4$.

%\end{corollary}

%\begin{corollary}\label{cor 4}
%If $D$ is a $(5,7)$-distribution with non-integrable $D_{k+1}$ then there exists a canonical frame on $D^\perp_{reg}$.
%\end{corollary}

{\bf Proof of Theorem \ref{thm 2}}
 Recall that $V$ and $C$ are  rank 1 distributions on $\mathbb P(D^\perp)$.  Let $V_0$ and $C_0$ denote the corresponding bundles with zero section removed. Obviously, they are principal $\mathbb R^*$-bundles, where $\mathbb R^*$ is the multiplicative group of real numbers. Further, recall that the fiber $D^\perp (q)$ of $D^\perp$ over  a point $q\in M$ is a plane and  the fiber of $\mathbb P(D^\perp(q))$ is a projective line. Fix a point $\lambda=(p,q)\in \mathbb P(D^\perp)$ and consider all homogeneous coordinates $[x_1:x_2]$ on $\mathbb P(D^\perp(q))$ such that the point $[p]$ is equal to $[1:0]$ in these coordinates.
Let $B$ denote the rank 2 bundle over $\mathbb P(D^\perp)$ with the fiber over $\lambda$ consisting of all such homogeneous coordinates on $\mathbb P(D^\perp(q))$. In other words, the fiber of $B$ over $\lambda=(p,q)$ is the set of all projective mappings from $\mathbb {RP}^1$ to
 $\mathbb P(D^\perp(q))$) sending the point $[1:0]$ to $[p]$.
Obviously, $B$ is a principal $ST(2,\mathbb R)$-bundle, where $ST(2,\mathbb R)$ is the group of  $2\times 2$ upper triangular matrices with the determinant $1$.
 %space of $1$-jets of restrictions of sections of $V_0$ to the integral curves of the distribution $V$ (i.e. to the fibers of the %bundle $\mathbb P(D^\perp)$).
 The following 2 bundles over $\mathbb P(D^\perp)$ play an important role in the sequel
$$
%B_1=C_0,\qquad
B_1=V_0\times C_0,\qquad B_2=B\times C_0.
$$
Here $B_1$ is the  bundle over $\mathbb P(D^\perp)$ with the fibers equal to the Cartesian product of the corresponding
fibers of $V_0$ and $C_0$; the bundle $B_2$ is understood similarly.
Obviously,
%$B_1$ is a principal $\mathbb R^*$-bundle,
$B_1$ is a principal $\mathbb R^*\times \mathbb R^* $-bundle, and $B_2$ is a principal $T(2,\mathbb R)$-bundle, where $T(2,\mathbb R)$ is the group of  $2\times 2$ upper triangular matrices.
%$G\oplus GL(1)$-principal bundle, where
%$$
%G=\left\{\left(\begin{array}{cc}1 & \tilde{a}\\ 0 & a\end{array}\right)\in GL(2)\right\}.
%$$

The group actions define fundamental vector fields on bundles $B_1$ and $B_2$. Let us choose bases in the space of fundamental vector fields as follows.
%distinguish fundamental vector fields on each of these bundles,
%$B_1$, $i=1,2,3$,
%which will constitute the bases for the tangent spaces to the fibers at every point.

First, let  $\bb$ denote the vector field on
%$B_1$ ($=
$C_0$
%)
generating the flow $(\lambda,h)\mapsto (\lambda, e^s h)$,
for any $(\lambda,h)\in C_0$, where $\lambda\in \mathbb P(D^\perp)$, $h\in C_0(\lambda)$. Since the fibers of $C_0$ appear as  factors for the fibers of $B_1$ and $B_2$ we can define the analogous vector field on these bundles as well (just by defining the corresponding flow such that it acts trivially on the remaining factors). We will denote this vector field on $B_1$ and $B_2$ by $\bb$ as well.

Secondly, let $\ba$ denote the vector field on $B_1$ generating the flow
$$\bigl(\lambda,(\ve,h)\bigr)\mapsto \bigl(\lambda,(e^s \ve, h)\bigr)$$
for any $\bigl(\lambda,(\ve,h)\bigr)\in B_1$, where $\lambda\in \mathbb P(D^\perp)$, $\ve\in V_0(\lambda)$, $h\in C_0(\lambda)$.
By the same letter $\ba$ denote the vector field on $B_2$ generating the flow $$\bigl(\lambda, ([x_1:x_2], h)\bigr)\mapsto \bigl(\lambda, ([x_1: e^{-s} x_2],h)\bigr),$$ where $\lambda=(p,q) \in \mathbb P(D^\perp)$,  $[x_1:x_2]$ are homogeneous
coordinates on $\mathbb P(D^\perp(q))$ such that $[p]=[x_1:0]$, and $h\in C_0(\lambda)$.

Finally, let $\bc$ denote the vector field on $B_2$ generating the flow
$$\bigl(\lambda, ([x_1:x_2], h)\bigr)\mapsto \bigl(\lambda, ([x_1-s x_2: x_2],h)\bigr),$$ where $\lambda$,  $[x_1:x_2]$, and $h$ are as above.

It is easy to show that we have the following relations on $B_2$

\begin{equation}
\label{abc}
[\ba,\bb]=0,\quad [\bc, \bb]=0, \quad [\ba,\bc]=-\bc,
\end{equation}
and the first relation holds on $B_1$ as well.
%$\ve, h\neq 0$.

%Now let us briefly sketch the main steps of our proof.
%First in the case of $i_D>1$ one can distinguish the canonical section $\ve$ of $V_0$. Then in all cases

%Assume that $e_0$ spans $V$ and $h$ spans $C$. $h$ and $e_0$ are given up to the transformations:
%$$
%e_0\mapsto ae_0,\qquad h\mapsto bh.
%$$
%Then $(b)$ can be considered as a coordinate function in fibres of $B_1$, the pair $(a,b)$ can be considered as coordinate functions %in fibres of $B_2$, and  the triple $(a,e_0(a),b)$ can be considered as coordinate functions in fibres of $B_3$. We will construct %partial connections on the bundles $B_1$, $B_2$ and $B_3$ in the directions of $e_0$ and $h$. Then we will lift vector fields $ae_0$ %and $bh$ to the corresponding points in the bundles. Finally we will construct the frame.
Let $\Pi_i: B_i\rightarrow \mathbb P(D^\perp)$ be the canonical projection.
We say that a vector field $E$ on the bundle $B_1$ is a \emph{lift of the distribution $V$ to $B_1$},  if for any $\mu_1=(\lambda,(\ve,h)\bigr)\in B_1$, where $\lambda\in \mathbb P(D^\perp)$, $\ve\in V_0(\lambda)$, $h\in C_0(\lambda)$, one has
\begin{equation*}
%\label{B1E}
(\Pi_1)_*E(\mu_1)=\ve.
\end{equation*}
To define the analogous notion on the bundle $B_2$ first define it on the bundle $B$. Take $\mu=\bigl(\lambda, [x_1:x_2]\bigr)\in B$ %or $\mu=\bigl(\lambda, ([x_1:x_2], h)\bigr) \in B_2$,
where $\lambda=(p,q) \in \mathbb P(D^\perp)$ and   $[x_1:x_2]$ are homogeneous
coordinates on $\mathbb P(D^\perp(q))$ such that $[p]=[x_1:0]$.
%, and $h\in C_0(\lambda)$.
Then $t=\frac{x_2}{x_1}$ defines coordinates on $\mathbb P(D^\perp(q))$ and a \emph{lift of the distribution $V$ to the bundle $B$} is a vector field $E$ on $B$, satisfying  the following relation for any such $\mu$:
\begin{equation*}
%\label{BE}
(\Pi)_*E(\mu)={\partial\over \partial t}([p]),
\end{equation*}
where $\Pi:B\rightarrow \mathbb P(D^\perp)$ is the canonical projection. Finally a \emph{ lift  of the distribution $V$ to the bundle $B_2$} is a vector field on $B_2$ such that $(\mathfrak P)_* E$ is a lift of $V$ to $B$, where $\mathfrak P:B_2\rightarrow B$ is the canonical projection.

To define the lift of the distribution $C$ to the bundle $B_i$ first define the lift of $C$ to $C_0$: it is a vector field $\mH$ on $C_0$ such that if $\widetilde \Pi:C_0\mapsto \mathbb P(D^\perp)$, then for any $(\lambda,h)\in C_0$, where $\lambda\in \mathbb P(D^\perp)$, $h\in C_0(\lambda)$, one has $(\widetilde \Pi)_*\mH\bigl((\lambda,h)\bigr)=h$. Then  the  vector field $\mH$ on the bundle $B_i$, $i=1,2$ is called a \emph{lift of the distribution $C$ to $B_i$} if $(\mathfrak P_i)_*\mH$
is a lift of $C$ to $C_0$, where $\mathfrak P_i:B_i\rightarrow C_0$ is the canonical projection.
%$\mu_i\in B_i$ one has
%\begin{equation}
%\label{B1E}
%(\Pi_i)_*\mH(\mu_i)=h.
%\end{equation}

Now let $W_i$, $i=1,2$ be the distribution of tangent spaces to the fibers of $B_i$, i.e. $$W_i:={\rm ker}(\Pi_i)_*.$$ Distributions $W_i$ are also called the \emph{vertical distribution on $B_i$}.
Lifts $E$ and $\mH$ are defined modulo vertical distributions $W_i$. By constructions, all lifts $E$ and $\mH$ of $V$ and $C$, respectively, satisfy the following relations
\begin{equation}
\label{EHabs}
\begin{split}
~&[\ba, E]=E \,\, \mod W_i,\quad [\bb, E]\in \Gamma(W_i),\quad [\bc, E]\in \Gamma(W_2),\\
~&[\bb, \mH]=\mH \,\mod W_i,\quad [\ba, \mH]\in \Gamma(W_i),\quad [\bc, \mH]\in \Gamma(W_2)\\
\end{split}
\end{equation}
Here the formulas containing $\bc$ are related to the bundle $B_2$ only.

%(i.e. vector fields tangent to the fibers of the corresponding $B_i$).
Our goal is to choose the lifts $E$ and $\mH$ in a canonical way. Once it is done one can complete the tuple consisting of the fundamental vertical vector fields and the canonical lifts to the canonical frame on the corresponding bundle $B_i$ by taking appropriate iterative Lie brackets of
these canonical lifts.

%Given a rank 1 distribution $\mathcal X$ on $\mathbb P (D^\perp)$ we say that a rank 1 distribution $\widehat{\mathcal X}$ on the %bundle $B_i$ defines a \emph{partial connection on $B_i$ in the direction of $X$} if $(\Pi_i)_*\widehat X=X$. A partial connection %is called \emph{principal} if it preserved by the action of the structure group.
%
%Now let us briefly sketch our proof. We will construct canonical principal partial connections $\widehat  V$ and $\widehat C$ in the %directions of $V$ and $C$ on one of the bundles $B_i$. Using special features of the bundles $B_i$ we will get from this the %canonical vector fields (sections) $E_0$ and $H$  of $\widehat V$ and $\widehat C$ respectively.

In the sequel $\widehat V$, $\widehat L_j$, $\widehat \mH_{k+1}$, $\widehat K_j$, $\widehat{\mathcal A}_j$, $\widehat H$, and $\widehat \Lambda$ denote the pull backs of distributions $V$, $L_j$, $H_k+1$, $K_j$, $\mathcal A_j$, $H$, and $\Lambda$, respectively, to the corresponding $B_i$, $i=1,2$.

{\bf Step 1.  The canonical lift of $V$.}
%Partial connection in direction of $V$.}
%Choose sections $e_0$ and $h$ of $V$  and $C$, respectively.
First we will work on the bundle $B_1$.
Let $E$ be a lift of $V$ and $\mH$ be a lift of $C$ to $B_1$.
By constructions, vector fields $E, \mH, \ad_E \mH,\ldots,\ad^i_E\mH$ span $\widehat L_i$
modulo $W_1$
and $\widehat L_k=\widehat H_{k+1}$. It implies that $\ad^{k+1}_E\mH\in\Gamma(\widehat H_{k+1})$.
Therefore there exist a function $\eta$ such that
\begin{equation}
\label{mu}
\ad^{k+1}_E \mH\equiv
%\sum_{i=1}^k\
\eta\, \ad^k_E \mH \quad {\rm mod}\,\widehat L_{k-1}
\end{equation}

First, we are looking for a pair of lifts $E$ and  $\mH$ satisfying the condition
\begin{equation}\label{cond 1}
\ad_{E}^{k+1}\mH\equiv 0 \mod\,\widehat L_{k-1}
\end{equation}
For this start with some lift $E$ of $V$ and $\mH$ of $C$ and assume that they satisfy \eqref{mu} for some function $\eta$.
Take other lifts $\widetilde E$ and $\widetilde \mH$.
Then there exist functions $\alpha$, $\beta$,$\gamma$, $\delta$ such that
\begin{equation}
\label{transf}
\widetilde E=E+\alpha \ba +\beta \bb,\quad
\widetilde \mH=\mH+\gamma\ba+\delta\bb.
\end{equation}
By direct computations, using relations \eqref{EHabs}, one gets

\begin{equation}
\label{tildemu}
\ad^{k+1}_{\widetilde E} \widetilde \mH\equiv
%\sum_{i=1}^k\
\bigl(\eta+(k+1)(\frac{k}{2}\alpha+\beta)\bigr) \ad^k_{\widetilde E} \widetilde \mH \quad {\rm mod} \widehat L_{k-1}
\end{equation}
Thus, a pair of lifts $\widetilde E$ and $\widetilde \mH$ satisfies condition \eqref{cond 1} if and only if
\begin{equation}
\label{cond11}
\frac{k}{2}\alpha+\beta=-\frac{\eta}{k+1}.
\end{equation}

Further,
%let $(B_1)_{reg}=
%$\Pi_1^{-1}\bigl(\mathbb P(D^\perp)_{reg}\bigr)$.
from  Remark \ref{0rem} it follows that  $[L_0, L_{w_D}](\lambda)\nsubseteq H_{k+1}(\lambda)$ for $\lambda\in \mathcal R_1$. Hence there is a section $G$ of $\widehat K_1$ on $\Pi_1^{-1}(\mathcal R_1)$, unique modulo $\widehat H_{k+1}$, such that for any lifts $E$ and $\mH$ of $V$ and $C$ one has
\begin{equation}
\label{Geq}
\ad_E^{i_D-1}G\equiv [\mH,\ad_{E}^{w_D}\mH]\,\,\mod\,  \widehat K_{w_D-1}.
\end{equation}
Now assume that $\mu, \tilde \mu\in \Pi_1^{-1}(\mathcal R_1) $,
\begin{equation}
\label{mm}
\mu=\bigl(\lambda,(\ve,h)\bigr),\quad  \tilde \mu=\bigl(\lambda,(a\ve,b h)\bigr),
 \end{equation}
 where $\ve\in V_0(\lambda)$, $h\in C_0(\lambda)$, and $a,b\in \mathbb R^*$. Then from \eqref{Geq} it follows immediately that
%If $\widetilde {e_0}$ and $\widetilde h$ are as in \eqref{ab}, then the corresponding section $\widetilde g$ of $K_1$ satisfies

\begin{equation}
\label{Gtrans1}
(\Pi_1)_*G(\widetilde \mu)\equiv a^{w_D-i_D+1}b^2 (\Pi_1)_*G(\mu)\mod \widehat H_{k+1}.
\end{equation}

Assume that $\lambda\in\mathcal R_2$ and $r(\lambda)\equiv r$ in a neighborhood $\widetilde U$ of $\lambda$. Choose a local basis of $\widehat{\mathcal A}_{r-1}$ in $\Pi_1^{-1}$ and complete it to a local basis of $\widehat{\mathcal A}_r$ by a tuple of vector fields $\bigl\{[\ad_{E}^s \mH, \ad_{E}^{r-s} \mH]\bigr\}_{s\in\mathcal S}$, where $\mathcal S\subset\{0,\ldots,r\}$. Since by \eqref{rl} $G$ is a section of $\widehat {\mathcal A}_r$ but does not belong to $\widehat {\mathcal A}_{r-1}$ , there exists $\bar s\in\mathcal S$ such that the coefficient $c_{\bar s}$ near one of the field $[\ad_{E}^{\bar s} \mH, \ad_{E}^{r-\bar s} \mH]$ in the expansion of $G$ in the chosen basis does not vanish at any point of $\mathcal R_2$ over a neighborhood $U\subset\widetilde U$ of $\lambda$. Let $\mathcal U=\pi^{-1}(U)$.
%Let
If points $\mu, \tilde\mu\in\mathcal U$ are related as in \eqref{mm}, then using \eqref{Gtrans1} it is easy to see that
\begin{equation}
\label{cs}
c_{\bar s}(\tilde \mu)=a^{r-w_D+i_D-1} c_{\bar s}(\mu)
\end{equation}
Note that by constructions $r\geq w_D$. So, if $i_D>1$ then the power of $a$ in the transformation rule \eqref{cs} is positive. So, we can distinguish the codimension 1 submanifold $B_3$ of $\Pi_1^{-1}(\mathcal U)$, consisting of all points of  $\Pi_1^{-1}(\mathcal U)$ with $c_{\bar s}=1$ if $r-w_D+i_D$ is even and with $|c_{\bar s}|=1$ if $r-w_D+i_D$ is odd. As a matter of fact $B_3$ is a $R^*$-bundle over $\mathcal U$, which is a reduction of $B_1$. One can naturally identify $B_3$ with $C_0$ (over $\mathcal U$).
Now we can consider only lifts of $V$ and $C$ which are tangent to $B_3$ or shortly lifts of $V$ and $C$ to $B_3$. If  $E$ and $\widetilde E$ are lifts of $V$ to $B_3$ and $\mH$ and $\widetilde\mH$ are lifts of $C$ to $B_3$ then instead of transformation rule
\eqref{transf} we have
\begin{equation}
\label{transfred}
\widetilde E=E+\beta \bb,\quad
\widetilde \mH=\mH+\delta\bb.
\end{equation}
So, the normalization condition \eqref{cond11} transforms to the condition $\beta=-\frac{\eta}{k+1}$ and gives the canonical lift of $V$ to $B_3$.

 %be the subspace
%The fact that $D_{k+1}$ is bracket generating implies that $H_{k+1}$ is bracket generating as well.
% Since $H_{k+1}$ is generated by $e_0$ and $h$ we know that $g$ is a combination of iterated Lie bracket of $e_0$ and $h$.
% Let $r_D$ be the smallest number such that the $r_D$th power $H_{k+1}^{r_D}$ of $H_{k+1}$ contains $g$ as a section.
%  and let $s_D$ be the smallest number of appearance of $e_0$ in the Lie brackets which allows to present $g$ as a section of $H_{k+1}^{(r_D)}$. It follows from the definition that $s_D\geq w_D$ and $r_D\geq 2$.
%
%If $i_D>1$ then $s_D>w_D$. Let $c$ be the coefficient at the term of order $s_D$ in $e_0$ and of order $r_D$ in $h$ in the presentation of $g$ as a section of $H_{k+1}^{(r_D)}$. It follows that
%
%if $e_0\mapsto ae_0$ and $h\mapsto bh$. Hence, for a given $b$, we can choose $a$ in such a way that $c\equiv 1$. In this way we normalise $e_0$ and equation \eqref{eq 1} reduces to the first order equation for one unknown function $b$. In other words we get a partial connection on the bundle $B_1$ in the direction $e_0$.

On the other hand, if $i_D=1$ then by definitions $r=w_D$. Therefore from \eqref{cs} it follows that $c_{\bar s}$ is constant on the fibers of $B_1$ (actually it is identically equal to $1$) and we cannot make the above reduction of the bundle $B_1$. Instead, we are looking for an additional condition for the lifts to $B_1$.
Again fix some lift $E$ and $\mH$ to $B_1$ of $V$ and $C$ respectively and $G$ is a vector field defined by \eqref{Geq} modulo $\widehat H_{k+1}$.
By constructions, $\widehat K_i=\widehat H_{k+1}+{\rm span} \{G, \ad_E G,\ldots,\ad^{i-1}_E G\}$ and
and $\widehat K_{k}=\widehat H$. It implies that $\ad^{k}_E G\in\Gamma(\widehat H)$.
Therefore there exist a function $\upsilon$ such that
\begin{equation}
\label{mu1}
\ad^{k}_E G\equiv
%\sum_{i=1}^k\
\upsilon\, \ad^{k-1}_E G \quad {\rm mod}\,\widehat K_{k-1}
\end{equation}

We are looking for a pair of lifts $E$ and  $\mH$  such that
\begin{equation}\label{cond 2}
\ad_{E}^{k}G\equiv 0 \mod\,\widehat K_{k-1}
\end{equation}
For this as before take some pair of lifts $E$ and $\mH$ and assume that they satisfy \eqref{mu1} with some function $\upsilon$.
Take other lifts $\widetilde E$ and $\widetilde \mH$. Then relation \eqref{transf} holds for some functions $\alpha$, $\beta$,$\gamma$, $\delta$.
%\begin{equation}
%\label{transf}
%\widetilde E=E+\alpha \ba +\beta \bb,\quad
%\widetilde \mH=\mH+\gamma\ba+\delta\bb.
%\end{equation}
By direct computations, using relations \eqref{EHabs} and \eqref{Gtrans1}, one gets

\begin{equation}
\label{tildemu}
\ad^{k}_{\widetilde E} G\equiv
%\sum_{i=1}^k\
\Bigl(\upsilon+k\bigl(\bigl(\frac{k-1}{2}+w_D\bigr)\alpha+2\beta\bigr)\Bigr) \ad^{k-1}_{\widetilde E} G\quad {\rm mod} \widehat K_{k-1}
\end{equation}
Thus, a pair of lifts $\widetilde E$ and $\widetilde \mH$ satisfies condition \eqref{cond 2} if and only if
\begin{equation}
\label{cond21}
\bigl(\frac{k-1}{2}+w_D\bigr)\alpha+2\beta=-\frac{\upsilon}{k}.
\end{equation}

%Similarly to condition \eqref{cond 1}, we introduce the condition:
%\begin{equation}\label{cond 2}
%\ad_{e_0}^kg=0\mod K_{k-1}.
%\end{equation}
%If $e_0\mapsto ae_0$ and $h\mapsto bh$ then $g\mapsto a^{w_D}b^2g$ and:
%$$
%\ad_{e_0}^kg\mapsto a^{k+w_D}b^2\ad_{e_0}^kg + ka^{k-1}\left(ae_0(b^2a^{w_D})+\frac{k-1}{2}b^2a^{w_D}e_0(a)\right) \ad_{e_0}^{k-1}g\mod K_{k-1}.
%$$
%We see that both \eqref{cond 1} and \eqref{cond 2} can be satisfied at one time. The freedom of choice of $a$ and $b$ in \eqref{cond 2} is given by the equation:
%\begin{equation}\label{eq 2}
%2ae_0(b)+\left(\frac{k-1}{2}+w_D\right)be_0(a)=0.
%\end{equation}
We see that linear equations \eqref{cond11} and \eqref{cond21} (w.r.t. $\alpha$ and $\beta$) are linearly independent if and only if $w_D\neq\frac{k+1}{2}$.
Hence in the case $i_D=1$ and $w_D\neq\frac{k+1}{2}$ conditions \eqref{cond 1} and \eqref{cond 2} fix uniquely the lift of $V$ to the bundle $B_1$.

 It remains to consider the case $i_D=1$ and $w_D=\frac{k+1}{2}$. In this case in general $V$ cannot be lifted to $B_1$ canonically,  but one can find the canonical lift of $V$ to $B_2$.
 First define the canonical lift of $V$ to the bundle $B$.
 Take $\mu=\bigl(\lambda, [x_1:x_2]\bigr)\in B$
where $\lambda=(p,q) \in \mathbb P(D^\perp)$,  $[x_1:x_2]$ are homogeneous
coordinates on $\mathbb P(D^\perp(q))$ such that $[p]=[x_1:0]$.  Then $\varphi([p])=\frac{x_2}{x_1}$ defines coordinates on $\mathbb P(D^\perp(q))$. Consider the curve
\begin{equation}
\label{ups}
\Upsilon_\mu(t)=\bigl((\varphi^{-1}(t),q),[x_1:(x_2-tx_1)]\bigr).
\end{equation}
Then the  canonical lift $E$ of $V$ to $B$ is defined by
\begin{equation}
\label{canEB}
E(\mu)=\frac{d}{dt}\Upsilon_\mu(t)|_{t=0}.
\end{equation}

Now we are ready to define the canonical lift of $V$ to $B_2$. For this let as before $\mathfrak P:B_2\rightarrow B$ be the canonical projection and consider all lifts $E$ of $V$ to $B_2$ such that $\mathfrak P_*(E)$ is the canonical lift of $V$ to $B$. If $E$ and $\tilde E$ are two such lifts then they are related as in \eqref{transfred} for some function $b$. By analogy with above the normalization condition \eqref{cond11} transforms to the condition $\beta=-\frac{\eta}{k+1}$ and gives the canonical lift of $V$ to $B_2$. By this we have completed to lift $V$ to the corresponding bundles $B_i$ in all possible cases.

Note that by direct computation one has that the canonical lift $E$ to $B_2$ of $V$ satisfies the following relations:

\begin{equation}
\label{adrel}
[\ba, E]=E, \quad [\bb, E]=0, \quad [\bc, E]=-2\ba.
\end{equation}
Note also that the first two relations are valid for the canonical lift of $V$ to $B_1$ as well. For this, using \eqref{EHabs}, it is enough to show that the line distribution generated by the canonical lift $E$ is invariant with respect to the flows generated by the vector fields $\ba$ and $\bb$. The latter follows from the normalization conditions \eqref{cond 1} and \eqref{cond 2} and the fact that the distribution $\widehat L_{k-1}$ is invariant w.r.t. to these flows.

Relations \eqref{abc} and \eqref{adrel} imply that the vector fields $\ba,\bc, E$ form the Lie algebra isomorphic to $\mathfrak {sl}_2(\mathbb R)$, and together with $\bb$ they form the Lie algebra isomorphic to $\mathfrak {gl}_2(\mathbb R)$.

%  then in general there is no other possibility to get additional, independent, first order equation for functions $a$ and $b$ in the direction of $e_0$. Therefore we introduce the condition of second order:
%\begin{equation}\label{cond 3}
%\ad_{e_0}^{k+1}h=0\mod L_{k-2}.
%\end{equation}
%This is an extension of \eqref{cond 1} and it defines a canonical projective structure on $e_0$. Namely, the freedom of choice of $a$ is given by:
%\begin{equation}\label{eq 3}
%\mathbb{S}(a)=0,
%\end{equation}
%where $\mathbb{S}(a)=2e_0^2(a)-(e_0(a))^2$ is Schwartz derivative. Equations \eqref{eq 1} and \eqref{eq 3} defines a partial connection on $B_3$ in the direction of $e_0$.

{\bf Step 2. The canonical lift of $C$.}
We assume that $E$ is the canonical lift of $V$ to the corresponding bundle $B_i$ defined in Step 1 and $\mathfrak H$ is a lift of $C$ to the same $B_i$. As before, let $G$ be a section of $K_1$ satisfying \eqref{Geq}. Define
$$
F=[E,[\mH,\ad_E^{k-1}G]].
$$
Then $F$ is a vector field not contained in $\hat\Lambda$. Indeed $\ad_E^{k-1}G$ is out of $\hat K$ and thus $[\mH,\ad_E^{k-1}G]$ is out of $\hat H$, but in $\hat \Lambda$. Then $[E,[\mH,\ad_E^{k-1}G]]$ is out of $\hat\Lambda$ since $[V,\Lambda]=T\mathbb P(D^\perp)$. There exists a function $\xi_0$ such that
\begin{equation}
\label{xi0}
\ad_\mH F\equiv\xi_0 F\mod \hat\Lambda.
\end{equation}

We are looking for a lift $\mH$ of $C$ (to one of the bundles $B_i$) satisfying:
\begin{equation}\label{cond 4}
\ad_\mH F\equiv 0\mod \hat\Lambda.
\end{equation}
For this start with some lift $\mH$ to $B_1$ or $B_2$ and assume that it satisfy \eqref{xi0} for some function $\xi_0$. Take another lift $\widetilde \mH$ of $V$. Then in the case of a lifting to  $B_1$ there exist functions $\gamma$ and $\delta$ such that
\begin{equation}
\label{hB1}
\widetilde \mH=\mH+\gamma \ba+\delta \bb,
\end{equation}
while in the case of a lifting to $B_2$ there is an additional function $\rho$ such that
\begin{equation}
\label{hB2}
\widetilde \mH=\mH+\gamma \ba+\delta \bb+\rho\bc.
\end{equation}
Then in both cases by direct computations, using relations \eqref{EHabs}, we get
$$
\ad_{\tilde\mH}\tilde F\equiv\ad_\mH F +\left((k+w_D-i_D+1)\gamma+3\delta\right)f\mod \hat\Lambda,
$$
where $\tilde F=[E,[\tilde\mH,\ad_E^{k-1}\tilde G]]$. Thus, the lift $\widetilde \mH$ satisfies condition \eqref{cond 4} if and only if
\begin{equation}\label{eq 4}
(k+w_D-i_D+1)\gamma+3\delta=-\xi_0.
\end{equation}
If $i_D>1$ then we have proved in Step 1 that the bundle $B_1$ is reduced to $B_3$ and then $\mH$ is defined uniquely modulo $\bb$. Therefore, if $i_D>1$ then equation \eqref{eq 4} is reduced to $\delta=-\frac{\xi_0}{3}$, which determines the canonical lift of $C$.

If $i_D=1$ then we are looking for one more normalization condition in addition to \eqref{cond 4} in the case $w_D\neq \frac{k+1}{2}$ and two more normalization conditions in the case $w_D=\frac{k+1}{2}$. Let us assume first $w_D=1$. Then $w_D\neq\frac{k+1}{2}$ since $k>1$. Moreover, we can take $G=[\mH,[E,\mH]]$. Then, since $k>1$, $[\mH,G]\in\Gamma(\hat H)$. The distribution $\hat H$ modulo $\hat H_{k+1}$ is spanned by $G,\ad_EG,\ldots,\ad_E^{k-1}G$. If we consider another lift $\tilde\mH$ and the corresponding $\hat G$ then $\ad_E^i\tilde G=\ad_E^iG\mod \hat H_{k+1}$ for any $i$. Therefore the sub-distribution
\begin{equation}
\label{M}
\mathfrak M=\spn\{\ad_E^iG\ |\ i=1,\ldots,k-1\}+\hat H_{k+1}\subset \hat H
\end{equation}
 is well defined.
 % modulo $\hat H_{k+1}$.
%Note that there is no $i=0$ in the definition of $M_{k-1}$ i.e.
We stress that
$G$ is not a section of $\mathfrak M$. Since $G\in\Gamma(\hat K)$, there exists a function $\xi_1$ such that

\begin{equation}
\label{xi1}
\ad_\mH G\equiv\xi_1G\mod \mathfrak M.
\end{equation}
Our additional normalization condition for a lift $\mH$ is
\begin{equation}\label{cond 5}
\ad_\mH G\equiv 0\mod \mathfrak M.
\end{equation}
Clearly $\ad_\mH G=-\ad^3_\mH E$ . If we take  another lift $\widetilde \mH$, then it satisfies \eqref{hB1} or \eqref{hB2} for some functions $\gamma$, $\delta$, and $\rho$. By direct computations we get
$$
\ad_{\tilde \mH}^3E\equiv \ad_\mH^3E-3\left(\gamma+\delta\right)G\mod \widehat H_{k+1}.
$$
Therefore the lift $\widetilde \mH$ satisfies condition \eqref{cond 5} if and only if
\begin{equation}\label{eq 5}
\gamma+\delta=\frac{\xi_1}{3}.
\end{equation}
Equations \eqref{eq 4} and \eqref{eq 5} are independent if and only if $k\neq 2$ (recall that we assume here that $w_D=i_D=1$). However, Corollary \ref{cor 2} says that if $k=2$ then $i_D>1$. In this way conditions \eqref{cond 4} and \eqref{cond 5} determine the canonical lift of $C$ to $B_1$ in the case $w_D=i_D=1$.

If $i_D=1$ and $w_D>1$ then $[\mH,[\mH,E]]\in \hat H_{k+1}$. By Lemma \ref{lemma 2} $w_D\geq 3$.
%Let us assume that $w_D\neq\frac{k+1}{2}$.
Similarly to the previous case of $i_D=1$ we have a sub-distribution
$$\mathfrak N=\spn\{\ad_E^i\mH\ |\ i=1,\ldots,k\}+\widehat V\subset \hat H_{k+1}.$$
%is well defined.
%modulo $\hat V$.
There exists a function $\xi_2$ such that
\begin{equation}
\label{xi2}
\ad^2_\mH E\equiv\xi_2\ad_\mH E\mod \mathfrak N.
\end{equation}
Our additional normalization condition for a lift $\mH$ in this case is
\begin{equation}\label{cond 6}
\ad_\mH^2E\equiv 0\mod \mathfrak N.
\end{equation}
If we take  another lift $\widetilde \mH$ then it satisfies \eqref{hB1} or \eqref{hB2} for some functions $\gamma$, $\delta$, and $\rho$.  By direct computations, using relations \eqref{EHabs}, we get
$$
\ad_{\tilde \mH}^2E \equiv\ad_\mH^2E+2\left(\gamma+\frac{1}{2}\delta\right)\ad_\mH E\mod \widehat V.
$$
Therefore the lift $\widetilde \mH$ satisfies condition \eqref{cond 6} if and only if
\begin{equation}\label{eq 6}
\gamma+\frac{1}{2}\delta=-\frac{\xi_2}{2}.
\end{equation}
Equations \eqref{eq 4} and \eqref{eq 6} are independent if and only if $k+w_D\neq 6$. On the other hand, if $k+w_D=6$ and $w_D>1$ then $k=w_D=3$. However, this situation cannot occur if $[D_{k+1},D_{k+1}]=D$. Indeed, assume that $k=w_D=3$. Let $\ve$ be a section of $V$ and $h$ be a section of $C$. %We can assume that $\ad^4_\ve h=0$.
Then
\begin{equation}
\label{kw31}
[h,\ad_\ve h]=0\mod H_4,\quad [h,\ad^2_\ve h]=0\mod H_4.
\end{equation}
%implies that $[H_4,H_4]\neq H$ which contradicts $[D_4,D_4]=D$.
Applying $\ad_\ve$ to the last relation, we get that
\begin{equation}
\label{kw32}
[\ad_\ve h,\ad_\ve^2 h]+[h,\ad^3_\ve h]\in H_4.
\end{equation}
Applying $\ad_\ve$ to \eqref{kw32} and using the fact that $\ad^4_\ve h \in H_4={\rm span}\{h,\ve, \ad_\ve h, \ad_\ve^2 h, \ad_\ve^3 h\}$ and relations \eqref{kw31}, we get
\begin{equation}
\label{kw33}
[\ad_\ve h,\ad^3_\ve h]\in \mathbb R [h,\ad^3_\ve h]+H_4.
\end{equation}
Finally applying   $\ad_\ve$ to the last relation and using \eqref{kw32} we obtain that
$$[\ad^2_\ve h,\ad^3_\ve h]\in \mathbb R [h,\ad^3_\ve h]+H_4.$$
Thus $\dim [H_4,H_4]/ H_4=1$ and  $[H_4,H_4]\neq H$, which implies that $[D_4,D_4]\neq D$ in contradiction to our genericity assumption (G2).
%However Lemma (**) implies that $k=w_D=3$ contradicts to $[D_{k+1},D_{k+1}]=D$.
So, the case $k=w_D=3$ is impossible.

As a conclusion, in the case when $i_D=1$, $w_D>1$,  and $w_D\neq \frac {k+1}{2}$ conditions \eqref{cond 4} and \eqref{cond 6} determine the canonical lift of $C$ to the bundle $B_1$, while in the case when $i_D=1$ and $w_D=\frac {k+1}{2}$ the same conditions determine a lift of $C$ to the bundle $B_2$ modulo $\mathbb R \bc$.
It remains to kill the freedom in the latter case by introducing one more normalization condition.
For this take a lift $\mH$ of $C$ to $B_2$ satisfying conditions \eqref{cond 4} and \eqref{cond 6}.  One can take
$G=[\mH,(ad_E)^{\frac{k+1}{2}}\mH]$. Since $k\equiv 1\, \mod 4$ and $k>1$, then $k\geq 5$ and therefore $[\mH, [E, G]]$ is a section of $H$. Hence there exists a function $\xi_3$ such that
%There exists a function $\xi_2$ such that
\begin{equation}
\label{xi2}
[\mH,[E, G]]\equiv\xi_3 G \mod \mathfrak M,
\end{equation}
where $\mathfrak M$ is as \eqref{M}
Our last normalization condition for a lift $\mH$ in the considered case is
\begin{equation}\label{cond 7}
[\mH,[E, G]]\equiv 0\mod \mathfrak M.
\end{equation}
If we take  another lift $\widetilde \mH$ satisfying satisfying conditions \eqref{cond 4} and \eqref{cond 6}, then there exists a function $\rho$ such that
\begin{equation}
\label{ hB22}
\widetilde \mH=\mH+\rho\bc
\end{equation}
Let $\widetilde G=[\widetilde \mH,(ad_E)^{\frac{k+1}{2}}\widetilde \mH]$. Then  by direct computations, using relations \eqref{adrel}, we get
$$
 [\widetilde \mH,[E, \widetilde G]]\equiv [\mH,[E, G]]-(k+1)\rho G \widehat H_{k+1}.
$$
Therefore the lift $\widetilde \mH$ satisfies condition \eqref{cond 7} if and only if
%\begin{equation}\label{eq 6}
$\rho=\frac{\xi_3}{k+1}$. Hence, conditions \eqref{cond 4},\eqref{cond 6}, and \eqref{cond 7} fix the lift of $C$ to the bundle $B_2$ uniquely. By this we have completed to lift $C$ to the corresponding bundles $B_i$ in all possible cases.

Finally it is not hard to show that the canonical lift $\mH$ (either to $B_1$ or to  $B_2$) satisfies the following commutative relations:

\begin{equation}
\label{adrela}
[\ba,\mH]=0, \quad [\bb,\mH]=\mH,\quad [\bc,\mH]=0.
\end{equation}
%\end{equation}
To prove these relations one can use arguments similar to those used at the end of step 1 for relations \eqref{adrel}: the distributions $\widehat\Lambda$, $\mathfrak M$, and $\mathfrak N$, appearing in the normalization conditions \eqref{cond 4}, \eqref{cond 5}, \eqref{cond 6}, and \eqref{cond 7}, are invariant with respect to the flow generated by vector fields $\ba$, $\bb$, and $\bc$.

{\bf Step 3. Construction of the canonical frame.}
Now let $E$ and $\mH$ be the canonical lift constructed in the previous steps. We can complete $E$, $\mH$ and the tuple consisting of the fundamental vertical vector fields of the corresponding bundle $B_i$ to the canonical frame $B_i$ by taking appropriate iterative Lie brackets of $E$ and $\mH$.

More precisely, if $i_D\equiv 1$ and $w_D$ is constant and not equal to $\frac{k+1}{2}$ as a canonical frame associated with our distribution on the bundle $B_1$ we can take the tuple of vector fields
\begin{equation}
\label{frame1}
\left(E, \mH, \ad_E \mH, \ldots , \ad_E^k \mH, G,\ldots,\ad_E^{k-1} G, [\mH, \ad_E^{k-1} G], \bigl[E, [\mH, \ad_E^{k-1} G]\bigr],\ba, \bb\right),
\end{equation}
where $G=[\mH, \ad_E^{w_D}\mH]$. If $i_D\equiv 1$ and $w_D\equiv \frac{k+1}{2}$ then as a canonical frame associated with our distribution on the bundle $B_2$ we can take the tuple of the vectors
%we add to the tuple \eqref{frame1} the fundamental vector $\bc$ to obtain the canonical frame on the bundle $B_2$.
\begin{equation}
\label{frame2}
\left(E, \mH, \ad_E \mH, \ldots , \ad_E^k \mH, G,\ldots,\ad_E^{k-1} G, [\mH, \ad_E^{k-1} G], \bigl[E, [\mH, \ad_E^{k-1} G]\bigr],\ba, \bb,\bc\right),
\end{equation}

Further, if $i_D>1$, since $H_{k+1}^2=H$, we can complete the tuple $(E, \mH, \ad_E \mH, \ldots , \ad_E^k\mH, b)$ to the canonical frame on $B_3$ by $k$ vector fields of the type $[\ad_E^s\mH, \ad_E^t\mH]$  for some integer $s$, $t$,  a vector fields of the type $\bigl[\mH, [\ad_E^{\bar s}\mH, \ad_E^{\bar t}\mH\bigr]$ and a vector field of the type $\Bigl[E,\bigl[\mH, [\ad_E^{\bar s}\mH, \ad_E^{\bar t}\mH\bigr]\Bigr]$ for some integers $\bar s$ and $\bar t$. By this we have completed the construction of the canonical frame for corank 2 distributions of the considered in all 3 cases.

Finally, since the fundamental vector fields and the vector field $E$ constitute the frame on each fiber of the bundle $\pi\circ\Pi_i:B_i\mapsto  M$ and these vector fields are the part of the canonical frame, a diffeomorpism of $B_i$, sending the canonical frame of a corank 2 distribution $D$ to the canonical frame of a corank 2 distribution $D'$ (by the pushforward) is  fiberwise. Therefore it induces the diffeomorphism of $M$. The latter diffeomorphism induces the equivalence  between the distributions $D$ and  $D'$, because $(\pi\circ\Pi_i)_*{\rm span}\{\mH, \ad_E \mH, \ldots , \ad_E^k \mH\}=D_{k+1}$ and $D_{k+1}^2=D$.
The proof of Theorem \ref{thm 2} is completed.
%Let us fix $i=1,2,3$ and choose a point $\xi\in B_i$. Let $p=\pi(\xi)\in P(D^\perp)$, where $\pi\colon B_i\to P(D^\perp)$ is the projection. $\xi$ is defined by a vector $h\in C_0(p)$ and additionally by $e_0\in V_0(p)$ or $j^1e_0\in j^1_VV_0(p)$ if $i=2$ or $i=3$. If $i=1$ then $e_0\in V_0(p)$ is uniquely determined by $h$. In any case we can lift $h$ and $e_0$ to the point $\xi$. In this way we obtain two vector fields on $B_i$, denoted $\bh$ and $\be_0$. Recall that $\bb$, $\ba$ and $\tilde{\ba}$ denote fundamental vector fields on $B_i$, which span directions tangent to the fibres of $B_i$. The assumption that $D_{k+1}$ is completely non-holonomic implies that fundamental vector fields together with iterative Lie brackets of $\bh$ and $\be$ generate the whole tangent bundle of $B_i$. In this way we can obtain a frame on $B_i$. We can assume that we use as few as possible Lie brackets of $\bh$ and $\be$ (in this order) and in this way we define the desired canonical frame.
$\Box$

\section{Symmetric models}
\setcounter{equation}{0}
\setcounter{theorem}{0}
\setcounter{lemma}{0}
\setcounter{proposition}{0}
\setcounter{corollary}{0}
\setcounter{remark}{0}

In this section given $k>2$  we find all maximally symmetric models for $(2k+1,2k+3)$-distributions satisfying conditions (G1) and (G2) with respect to the local equivalence. We show that the algebra of infinitesimal symmetries for this models is $(2k+6)$- dimensional if $k\neq 1 \mod 4$ and $(2k+7)$-dimensional if $k\equiv 1\mod 4$, i.e. the upper bounds of Corollary \ref{cor 3} are sharp. By Theorem 2 it may occur only if $i_D \equiv 1$. Note that the case $k=2$ is exceptional, because by Corollary \ref{cor 2} in this case $i_D$ has to be equal to $2$. As was already mentioned in the Introduction, the most symmetric model  for $k=2$ (given by \eqref{sym57})  can be obtained from the analysis of our canonical frame on $B_3$ described in the proof of Theorem \ref{thm 2} but this model can be also recognized without difficulties from the list of $7$-dimensional non-degenerate fundamental graded Lie algebra given in \cite{kuz}, thus we omit this analysis.

%By Theorem \ref{theor 2} these models must satisfy $i_D\equiv 1$
%If $k=2$ then it follows from Corollaty \ref{cor 4} that the dimension of the algebra of infinitesimal symmetries of a %$(5,7)$-distribution $D$ with maximal Kronecker index and completely non-holonomic $D_3$ is bounded from above by 9. If $k>2$ then %Theorem \ref{thm 2} implies that the algebra of infinitesimal symmetries of a $(2k+1,2k+3)$-distribution $D$ with maximal Kronecker %index and completely non-holonomic $D_{k+1}$ is at most $2k+7$ dimensional if  $w_D=\frac{k+1}{2}$ (it can happen only if $k\equiv %1\mod 4$) or at most $2k+6$ dimensional if $w_D\neq\frac{k+1}{2}$. In any case the maximal dimension is attained if and only if all %structural functions of the canonical frame are constant. If it happens, then a distribution is locally equivalent to a left %invariant distribution on a Lie group. We want to find all such distributions.

So, let $k>2$, $i_D\equiv 1$, and $w_D\equiv w$. Then the canonical frame is given by the tuple of vector fields \eqref{frame1} if $w\neq \frac{k+1}{2}$ and by the tuple of vector fields \eqref{frame2} if $w=\frac{k+1}{2}$. For shortness let
\begin{equation*}
\begin{split}
~&\bx_j= \ad_E^j \mH,\quad 0\leq j\leq k,\\
~&\by_j=\ad_E^{j-1} G,\quad 1\leq j\leq k, \\
~&\bz=[\mH, \ad_E^{k-1} G],\quad \bn=\bigl[E, [\mH, \ad_E^{k-1} G].
\end{split}
\end{equation*}
%\qquad \by_1=[\bx_s,\bx_t],\qquad \by_{i+1}=[\be_0,\by_i],
Then in the new notation
\begin{equation}
\label{newnot}
\begin{split}
~& [E,\bx_j]=\bx_{j+1}, \quad0\leq j\leq k-1,\\
~& [\bx_0,\bx_w]=\by_1,\quad [E,\by_j]=\by_{j+1},\,1\leq j\leq k-1,\\
~&[\bx_0,\by_k]=\bz,\quad [E,\bz]=\bn.
\end{split}
\end{equation}

 Denote by ${\rm MS}(k,w)$ the set of all equivalence classes of germs of $(2k+1, 2k+3)$-distributions $D$, satisfying conditions (G1) and  (G2),
relations $i_D\equiv 1$ and $w_D\equiv w$, and  having the algebra of infinitesimal symmetries of the dimension equal to the dimension of the bundle, where their canonical frames are constructed. Take a distribution $D$ representing an element of ${\rm MS}(k,w)$.
  %has the algebra of infinitesimal symmetries of the dimension equal to the dimension of the bundle where its canonical frame is constructed.
This happens if and only if \emph{all structural functions of the canonical frame  of $D$ are constant}. In other words, the vector fields of the canonical frame of $D$ should form the Lie algebra over $\mathbb R$ (that is isomorphic to the algebra of infinitesimal symmetries of the distribution $D$). Denote this algebra by $\mg$.  What properties does this algebra have?
%Let us consider first the case $k>2$. Assume that the algebra of infinitesimal symmetries of a distribution $D$ has maximal possible dimension in the class of all $(2k+1,2k+3)$-distributions with maximal Kronecker index, completely non-holonomic $D_{k+1}$ and fixed value of the invariant $w_D$. It follows that $i_D=1$ (otherwise the algebra of infinitesimal symmetries is at most $2k+5$-dimensional). Then, the vertical coordinates $a$ and $b$ on the canonical bundle define a bi-grading on the canonical frame.
First, combining \eqref{newnot} with \eqref{adrel} and \eqref{adrela} (with $\mH$ replaced by $\bx_0$) and  using the Jacobi identity, one gets
\begin{equation}
\label{abxyz}
\begin{split}
~&[\ba,\bx_j]=j \bx_j,\quad [\ba, \by_j]=(w+j-1)\by_j, \quad [\ba,\bz]=(w+k-1)\bz, \quad[\ba,\bn]=(w+k)\bn,\\
~&[\bb,\bx_j]=\bx_j,\quad [\bb,\quad \by_j]=2\by_j,\quad [\bb, \bz]=3\bz,\quad [\bb,\bn]=3\bn.
\end{split}
\end{equation}
This motivates the introduction of the following natural \emph{bi-grading} on the algebra $\mg$ by assigning to each element of the tuple \eqref{frame1} or \eqref{frame2}
two integer numbers as follows:
\begin{eqnarray}
\label{mcomm}
&~&
\begin{split}
~&\bx_j\mapsto (-j, -1),\,\, \by_j\mapsto (-(w+j-1),-2),\,\,\\
~& \bz\mapsto (-(w+k-1),-3), \,\, \bn\mapsto (-w-k,-3)\\
\end{split}\\
&~&E\mapsto (-1,0), \quad \{\ba,\bb\}\mapsto (0,0), \quad \bc\mapsto (1,0)\label{rest}.
\end{eqnarray}
The above assignment for elements in \eqref{mcomm} is given by the following simple rule: the first integer in the bi-degrees given by \eqref{mcomm} is the number of appearance of $E$ in the representation of the corresponding vector field from the canonical frame as the iterative  brackets of $E$ multiplied by $-1$ and $\mH$ and the second integer there is the number of appearance of $E$ in this representation multiplied by $-1$.
%Indeed, $\be_0$ is homogeneous of degree 1 in $a$, $\bh$ is homogeneous of degree 1 in $b$ and any vector field $\bx$ of a canonical frame is an iterated Lie bracket of $\be_0$ and $\bh$. Therefore it is homogeneous in $a$ and in $b$. It follows that there exists numbers $n_a$ and $n_b$ such that $[\ba, \bx]=n_a\bx$ and $[\bb,\bx]=n_b\bx$ and we say that $\bx$ has degree $(n_a,n_b)$. If $\bx$ has degree $(n_a,n_b)$ and $\by$ has degree $(m_a,m_b)$ then $[\bx,\by]$ has degree $(n_a+m_a,n_b+m_b)$. We immediately get the following:
Let $\mg_{j_1,j_2}$ be the linear span (over $\mathbb R$) of all elements of the canonical frame corresponding to the pair $(j_1, j_2)$.
%is assigned.
Then using relations \eqref{adrel}, \eqref{adrela} and the Jacobi identity,  one gets that
$$[\mg_{j_1,j_2},\mg_{l_1,l_2}]\subset\mg_{j_1+l_1,j_2+l_2},$$
i.e
%\begin{equation}
%\label{bigrad0}
$\mg=\displaystyle{\bigoplus_{(j_1,j_2)\in \mathbb Z^2}}\mg_{j_1,j_2}$
%\end{equation}
 is indeed the bi-grading of the Lie algebra $\mg$.

\begin{definition}
\label{kwtype}
Given $k>2$ and odd $w$ , $1\leq w\leq 2k-1$ , a bi-graded Lie algebra
%\begin{equation}
%\label{bigrad0}
%$\mg=\displaystyle{\bigoplus_{(j_1,j_2)\in \mathbb Z^2}}\mg_{j_1,j_2}$
%\end{equation}
$\mg$ is called a bi-graded Lie algebra of the type $(k,w)$ if the following two conditions hold
\begin{enumerate}
\item
\begin{equation}
\label{mgspan}
\mg =\begin{cases}{\rm span}\{E,\bx_0, \ldots,\bx_k,\by_1,\ldots,\by_k ,\bz,\bn, \ba, \bb\}& {\rm if  }\,\,w\neq \frac{k+1}{2},\\
{\rm span}\{E, \bx_0, \ldots,\bx_k,\by_1,\ldots,\by_k,\bz, \bn,  \ba, \bb,\bc\}& {\rm if  }\,\,w= \frac{k+1}{2}
\end{cases}.
\end{equation}
%, and by the same vectors and the vector $\bc$, if $w\neq \frac{k+1}{2}$
such that   the commutative relations \eqref{newnot}, \eqref{abc}, \eqref{adrel}, and \eqref{adrela} (with $\mH$ replaced by $\bx_0$ in the latter) hold;

\item
the bi-grading on $\mg$ is  given by \eqref{mcomm}-\eqref{rest}.
\end{enumerate}
\end{definition}

So we have shown that \emph{if  the distribution $D$ representing an element of ${\rm MS}(k,w)$
%satisfying conditions (G1) and  (G2),
%relations $i_D\equiv 1$ and $w_D\equiv w$, and has the algebra of infinitesimal symmetries of the dimension equal to the dimension of the bundle where its canonical frame is constructed
then its algebra of infinitesimal symmetries ${\rm symm}(D)$ is a bi-graded Lie algebra of type $(k,w)$}.

Now let
\begin{equation}
\label{frakm}
\mathfrak m=\displaystyle{\bigoplus_{j_2<0}}
%{(j_1,j_2)\in \mathbb Z^2\backslash S}}
\mg_{j_1,j_2}={\rm span}\{\bx_0,\ldots,\bx_k, \by_1,\ldots, \by_k, \bz,\bn\},
\end{equation}
$$\mg'=\displaystyle{\bigoplus_{j_2\geq 0}}
%{(j_1,j_2)\in \mathbb Z^2\backslash S}}
\mg_{j_1,j_2}.$$
Note that
$\mg'={\rm span}\{E,\ba,\bb\}$ if $w_D\neq \frac{k+1}{2}$ and
$\mg'={\rm span}\{E,\ba,\bb,\bc\}$ if $w_D=\frac{k+1}{2}$. Also note that both $\mathfrak m$ is a bi-graded nilpotent subalgebra of $\mg$.
Besides,
\begin{equation}
\label{splitm}
\mg=\mg'\oplus \mathfrak m.
\end{equation}
By the standard arguments the distribution $D$ is locally equivalent to an invariant distribution on the homogeneous space
$G/G'$, where $G$ and $G'$ are
the connected, simply-connected
Lie groups with the Lie algebras $\mg$ and $\mg'$, respectively.
Moreover, from the splitting \eqref{splitm} it follows that \emph{the distribution $D$ is locally equivalent to the left invariant distribution $D_\mg$ on the connected, simply connected Lie group $\mathcal M$ with the Lie algebra $m$ such that}
\begin{equation}
\label{De}
D_\mg(e)={\rm span} \{\bx_0,\ldots,\bx_k,\by_1,\ldots,\by_k\},
\end{equation}
where $e$ is the identity of the group $\mathcal M$. Moreover, we have the following

\begin{proposition}
\label{redbigrade}
The correspondence
%$D\mapsto {\rm symm}(D)$
between the set ${\rm MS}(k,w)$ and the set of all bi-graded Lie algebras of type $(k,w)$, given by  $D\mapsto {\rm symm}(D)$,
%of equivalence classes of germs of $(2k+1, 2k+3)$-distributions $D$, satisfying conditions (G1) and  (G2),
%relations $i_D\equiv 1$ and $w_D\equiv w$, and  having the algebra of infinitesimal symmetries of the dimension equal to the dimension of the bundle, where their canonical frames are constructed, to the set of all bi-graded Lie algebras of type $(k,w)$
is a bijection.
 %The correspondence is given via relation \eqref{De}.
\end{proposition}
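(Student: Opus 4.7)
The plan is to establish the bijection by exhibiting an explicit inverse to $D\mapsto\mathrm{symm}(D)$. Injectivity is essentially contained in the discussion preceding the proposition: for any $D\in\mathrm{MS}(k,w)$ with $\mg=\mathrm{symm}(D)$, the distribution $D$ is locally equivalent to $D_\mg$, which is determined by the bi-graded algebra $\mg$ alone (through the splitting \eqref{splitm} and the prescription \eqref{De}). Therefore if two distributions $D,D'\in\mathrm{MS}(k,w)$ have isomorphic bi-graded symmetry algebras, they are both equivalent to the same $D_\mg$, hence to each other.

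For surjectivity, given any bi-graded Lie algebra $\mg$ of type $(k,w)$, set $\mathfrak m=\bigoplus_{j_2<0}\mg_{j_1,j_2}$, which is a nilpotent subalgebra of $\mg$ by the bi-grading axioms and coincides with \eqref{frakm}. Let $\mathcal M$ be its connected simply-connected Lie group and define $D_\mg$ to be the left-invariant distribution with $D_\mg(e)=\mathrm{span}\{\bx_0,\ldots,\bx_k,\by_1,\ldots,\by_k\}$. I would then verify that $D_\mg\in\mathrm{MS}(k,w)$ and $\mathrm{symm}(D_\mg)=\mg$. The key computations use the relations \eqref{newnot} together with the bi-homogeneity constraints to show: first, the bracket $[\bx_0,\bx_w]=\by_1$ combined with Jacobi applied to $E\in\mg'$ (via $[E,[\bx_0,\bx_w]]=[\bx_1,\bx_w]+[\bx_0,\bx_{w+1}]$ and its iterates) generates all remaining $\by_l$ as brackets among the $\bx_i$'s, yielding both $D_\mg^2=T\mathcal M$ and condition (G2); second, the brackets $[\bx_i,\by_j]$ land in $\mathrm{span}\{\bz,\bn\}$ in the pattern dictated by item (4) of Proposition \ref{maxequiv}, yielding (G1). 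A direct inspection of the resulting bracket table at a generic point of $\mathbb P(D_\mg^\perp)(e)$ shows $w_{D_\mg}=w$ (the pair $(\bx_0,\bx_w)$ witnesses the minimality of the bracket index) and $i_{D_\mg}=1$.

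Having placed $D_\mg$ in $\mathrm{MS}(k,w)$, the identification $\mathrm{symm}(D_\mg)=\mg$ follows from two matching dimension inequalities. The lower bound $\dim\mathrm{symm}(D_\mg)\geq\dim\mg$ is obtained by embedding $\mg$ into $\mathrm{symm}(D_\mg)$: the subalgebra $\mathfrak m$ acts by right-invariant vector fields on $\mathcal M$, which commute with left-invariant vector fields and hence preserve $D_\mg$, while the complementary part $\mg'=\bigoplus_{j_2\geq 0}\mg_{j_1,j_2}$ consists of grading-preserving derivations of $\mg$ and integrates to local diffeomorphisms that preserve $D_\mg(e)=\bigoplus_{j_1}\mg_{j_1,-1}$ and hence $D_\mg$. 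The matching upper bound $\dim\mathrm{symm}(D_\mg)\leq\dim\mg$ is exactly the dimension of the bundle $B_i$ on which Theorem \ref{thm 2} constructs the canonical frame, and a direct count using \eqref{mgspan} confirms that $\dim\mg$ coincides with it.

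The main obstacle is the rigorous matching, in the surjectivity step, of the abstract bi-grading on $\mg$ with the geometric filtrations $L_i$, $K_i$, $\mathcal A_r$ of Section 2 at a generic point of $\mathbb P(D_\mg^\perp)$. One must identify the horizontal field coming from the canonical lift of $V$ with the bi-degree shift implemented by $E\in\mg'$, and verify that the normalization conditions \eqref{cond 1}, \eqref{cond 2}, \eqref{cond 4}, \eqref{cond 5}, \eqref{cond 6}, \eqref{cond 7} used in the proof of Theorem \ref{thm 2} are automatically satisfied by the symbol-level frame coming from the bi-grading. Once this dictionary is in place, the canonical frame of $D_\mg$ is spanned precisely by the elements of $\mg$ in the prescribed order, and both directions of the bijection follow.
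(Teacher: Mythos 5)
Your strategy coincides with the paper's: exhibit the inverse map $\mg\mapsto D_\mg$, obtain $\dim\mathrm{symm}(D_\mg)\geq\dim\mg$ from the subgroup action and $\dim\mathrm{symm}(D_\mg)\leq\dim\mg$ from Corollary~\ref{cor 3}, and conclude by dimension count. However, there are two concrete gaps.

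First, your verification that $D_\mg\in\mathrm{MS}(k,w)$ presupposes knowledge of the full bracket table of $\mg$, which is \emph{not} part of Definition~\ref{kwtype}: the relations~\eqref{newnot} only pin down $[\bx_0,\bx_w]=\by_1$ and $[\bx_0,\by_k]=\bz$, not the general brackets $[\bx_i,\bx_j]$ or $[\bx_i,\by_j]$. When you write that ``the brackets $[\bx_i,\by_j]$ land in $\mathrm{span}\{\bz,\bn\}$ in the pattern dictated by item (4) of Proposition~\ref{maxequiv}'' and that ``a direct inspection of the resulting bracket table\ldots shows $w_{D_\mg}=w$,'' you are relying on facts that must first be derived. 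The paper does this in Lemma~\ref{lemma 4}(A), which uses the Jacobi identity together with the bi-grading to compute $[\bx_i,\by_{k-i}]=(-1)^i\bz$, $[\bx_i,\by_{k-i+1}]=(-1)^{i+1}i\bn$, $[\bx_i,\bx_j]=c_{i,j}\by_{i+j-w+1}$ and to constrain the $c_{i,j}$ via \eqref{system 1}--\eqref{system 2}; only with these explicit forms can one verify (G1), argue (G2) by contradiction (if $c_{i,j}=0$ for all $i+j=l$, then \eqref{system 1} propagates to $c_{0,w}=0$), and read off $w_{D_\mg}=w$ from conditions (1)--(2) of that lemma. Your generation argument for (G2) via Jacobi iterates of $[\bx_0,\bx_w]$ is salvageable in spirit, but it cannot stand without the structural relations you have not established.

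Second, the dimension count alone gives $\mathrm{symm}(D_\mg)\cong\mg$ as abstract Lie algebras, whereas the proposition asserts a bijection onto \emph{bi-graded} Lie algebras of type $(k,w)$, so the isomorphism must be shown to respect the bi-grading. Your closing paragraph correctly senses a remaining step here, but mislocates it: there is no need to re-verify the normalization conditions \eqref{cond 1}--\eqref{cond 7} against a ``symbol-level frame.'' The paper handles this directly and briefly: $\mathrm{symm}(D_\mg)$ carries Tanaka's natural grading (\cite{tan1},\cite{yamag}), which is matched to the second bi-degree, while the first bi-degree is recovered from the relations~\eqref{abxyz} (the $\ad_\ba$-eigenvalue structure). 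This is the step that upgrades a Lie algebra isomorphism to a bi-graded one, and it is independent of the canonical-frame normalizations.
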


\begin{proof}
%By the previous arguments, to prove the proposition it remains to show that
%%\begin{enumerate}
%%\item
%for any bi-graded Li algebra $\mg$ of type $(k,w)$  with the decomposition \eqref{splitm}
%the left-invariant distribution $D_0$ on the Lie group $\mathcal M$ with the Lie algebra $\mathfrak m$, defined by relation %%\eqref{De}, satisfies conditions (G1) and  (G2),
%relations $i_{D_0}\equiv 1$ and $w_{D_0}\equiv w$, and it  has the algebra of infinitesimal symmetries of the dimension equal to the %dimension of the bundle, where their canonical frames are constructed.
%%\end{enumerate}
%To prove this fact  we need the following lemma, which will be also useful in the sequel:
%We have to prove that the above mentioned correspondence between the equivalence classes of distributions from the  considered class having the algebra of infinitesimal symmetries of the dimension equal to the dimension of the bundle, where their canonical frames are constructed, and the set of bi-graded Lie algebras of type $(k,w)$  is a bijection.
First  we prove the following lemma, which will be also useful for other purposes in the sequel:

\begin{lemma}\label{lemma 4}

\begin{itemize}
\item[(A)]
If $\mg$ is a bi-graded Lie algebra of type $(k,w)$ with the basis as in Definition \ref{kwtype}, then
%is spanned by
%$$
%(\be_0,\bx_0,\ldots,\bx_k,\by_1,\ldots,\by_k,\bz,\bn)
%$$
%then
\begin{equation}
\label{xyzn}
\begin{split}
~&[\bx_i,\by_{k-i}]=(-1)^i\bz, \quad 0\leq i\leq k-1\\
~&[\bx_i,\by_{k-i+1}]=(-1)^{i+1}i\bn,\quad 1\leq i\leq k\\
~&[\bx_i,\bx_j]=c_{i,j}\by_{i+j-w+1},
\end{split}
\end{equation}
where $c_{i,j}$ satisfy the following four properties (in addition to the evident antisymmetricity $c_{i,j}=-c_{j,i}$):
\begin{enumerate}
\item
$c_{i,j}=0$ if $i+j<w$ or $i+j>k+w-1$;
%$$
%a_i=i(-1)^{i+1},\qquad b_i=(-1)^i,
%$$
\item $c_{0,w}=1$;
\item
\begin{equation}\label{system 1}
c_{i,j}=c_{i+1,j}+c_{i,j+1};
\end{equation}
\item
\begin{eqnarray}\label{system 2}
&~&(-1)^i c_{0,k+w-1-i}-(-1)^{k+w-1-i}c_{0,i}=c_{i,k+w-1-i}\\
&~&(-1)^{i+1}i c_{0,k+w-i}-(-1)^{k+w-i}(k+w-i+1)c_{0,i}=0.\nonumber
\end{eqnarray}
\end{enumerate}

\item [(B)]
Conversely, if $w\neq \frac{k+1}{2}$ and the tuple $(E,\bx_0,\ldots,\bx_k,\by_1,\ldots,\by_k,\bz,\bn,\ba,\bb) $ satisfy relations \eqref{newnot}, \eqref{abc}, \eqref{adrel}, \eqref{adrela}, and \eqref{xyzn} with  antisymmetric matrix $(c_{i,j})$ satisfy \eqref{system 1} and \eqref{system 2}, then this tuple spans the bi-graded Lie algebra of type $(k,w)$.

\item [(C)]
The matrix $(c_{i,j})$ defines the bi-graded Lie algebra of type $(k,w)$ uniquely, up to an isomorphism.
\end{itemize}
\end{lemma}
\begin{proof}
Throughout this proof we use the fact that
by \eqref{mcomm} and \eqref{rest} the spaces $\mg_{j_1,j_2}$ are at most one-dimensional if $(j_1,j_2)\neq (0,0)$.
Therefore using the compatibility of the Lie brackets with the bi-grading we get that there exists constant $a_i$, $b_i$ and $c_{i,j}$ such that
\begin{equation*}
%\label{xyzn}
\begin{split}
~&[\bx_i,\by_{k-i}]=b_i\bz, \quad 0\leq i\leq k-1\\
~&[\bx_i,\by_{k-i+1}]=a_i\bn,\quad 1\leq i\leq k\\
~&[\bx_i,\bx_j]=c_{i,j}\by_{i+j-w+1}.
\end{split}
\end{equation*}

Constants $a_i$ and $b_i$ can be found by applying the Jacobi identity  to $[E,[\bx_i,\by_{k-i}]]$. On the one hand, we get
$[E,[\bx_i,\by_{k-i}]]=b_i\bn$
and on the other hand
$$
[E,[\bx_i,\by_{k-i}]]=[\bx_{i+1},\by_{k-i}]+[\bx_i,\by_{k-i+1}]=(a_i+a_{i+1})\bn.
$$
Hence we get the equation
$
a_i+a_{i+1}=b_i
$
which holds for any $i=1,\ldots,k-1$. Moreover, if $i=0$ we get
$
a_1=b_0.
$
Similarly we consider $[E,[\bx_i,\by_{k-i-1}]]$ and by Jacobi identity we get the equation
$
b_i+b_{i+1}=0,
$
which holds for any $i=0,\ldots,k-2$. By definition $b_0=1$. In this way we get $b_i=(-1)^i$ and then $a_i=i(-1)^{i+1}$.

In order to get the relation $c_{i,j}=c_{i+1,j}+c_{i,j+1}$ we consider Jacobi identity applied to $[E,[\bx_i,\bx_j]]$, whereas in order to get relations \eqref{system 2} we consider Jacobi identity applied to $[\bx_0,[\bx_i,\bx_{k+w-1-i}]]$ and $[\bx_0,[\bx_i,\bx_{k+w-i}]]$. In this way the part (A) of the Lemma is proved.

Let us prove now the part (B). From the part (A) we know that \eqref{system 1} and \eqref{system 2} are satisfied for any bi-graded Lie algebra of type $(k,w)$. We have to show that there is no other relation on structural constants $a_i$, $b_i$ and $c_{i,j}$. For $w\neq\frac{k+1}{2}$, an additional possibly non-trivial relation can be obtained from Jacobi identity applied to $[\bx_l,[\bx_i,\bx_j]]$, where $l+i+j=k+w-1$ or $l+i+j=k+w$. We will show that all these relations are consequences of \eqref{system 1} and \eqref{system 2}.

We can assume that $l\leq i$ and $l\leq j$. The proof is by induction: we assume that Jacobi identity is satisfied for $[\bx_{l-1},[\bx_{\tilde i},\bx_{\tilde j}]]$ where $l-1\leq\tilde i$ and $l-1\leq\tilde j$. The case $l=0$ corresponds to \eqref{system 2}. If $l>0$ then $\bx_l=[E,\bx_{l-1}]$. Thus
\begin{eqnarray*}
[\bx_l,[\bx_i,\bx_j]]&=& [E,[\bx_{l-1},[\bx_i,\bx_j]]]-[\bx_{l-1},[E,[\bx_i,\bx_j]]]\\
&=&[E,[\bx_{l-1},[\bx_i,\bx_j]]] -[\bx_{l-1},[\bx_{i+1},\bx_j]]-[\bx_{l-1},[\bx_i,\bx_{j+1}]].
\end{eqnarray*}
We used here \eqref{system 1}, which is equivalent to Jacobi identity of brackets involving $E$. By our assumption, we know that Jacobi identity is satisfied by $[\bx_{l-1},[\bx_i,\bx_j]]$, $[\bx_{l-1},[\bx_{i+1},\bx_j]]$ and $[\bx_{l-1},[\bx_i,\bx_{j+1}]]$. Therefore
\begin{eqnarray*}
[\bx_l,[\bx_i,\bx_j]]&=& [E,[[\bx_{l-1},\bx_i],\bx_j]]+[E,[\bx_i,[\bx_{l-1},\bx_j]]]\\
&&-[[\bx_{l-1},\bx_{i+1}],\bx_j]-[\bx_{i+1},[\bx_{l-1},\bx_j]] -[[\bx_{l-1},\bx_i],\bx_{j+1}]-[\bx_i,[\bx_{l-1},\bx_{j+1}]].
\end{eqnarray*}
But, if we use \eqref{system 1} for Lie bracket involving $E$ we get
$$
[E,[[\bx_{l-1},\bx_i],\bx_j]]-[[\bx_{l-1},\bx_{i+1}],\bx_j] -[[\bx_{l-1},\bx_i],\bx_{j+1}]=[[\bx_l,\bx_i],\bx_j]
$$
and
$$
[E,[\bx_i,[\bx_{l-1},\bx_j]]]-[\bx_{i+1},[\bx_{l-1},\bx_j]] -[\bx_i,[\bx_{l-1},\bx_{j+1}]]=[\bx_i,[\bx_l,\bx_j]].
$$
This completes the proof of part (B).

To prove part (C) let us take another basis of the algebra $\mg$ as in Definition \ref{kwtype} and let $\tilde E$, $\tilde \bx_j$ and $\tilde \by_j$ be the corresponding elements of this basis. Then $\tilde E=\alpha E$ and $\tilde \bx_0=\beta \bx_0$ for some $\alpha$ and $\beta$. This together with \eqref{newnot} implies that $\tilde \bx_j=\alpha^j \beta \bx_j$, $\tilde \by_j=\alpha^{w+j-1}\beta^2 \by_j$. Then using the last relation of \eqref{xyzn} we get that $\tilde c_{i,j}=c_{i,j}$, where $\tilde c_{i,j}$ denotes the corresponding constant for the new basis. So, each $c_{i,j}$ is an invariant of the bi-graded Lie algebra of type $(k,w)$, which complete the proof of the last part of the lemma.

\end{proof}

Now fix a bi-graded Li algebra $\mg$ of type $(k,w)$. Let $\mathfrak m$ be as in \eqref{frakm},
%with the decomposition \eqref{splitm}
and let $D_\mg$ be the left-invariant distribution on the Lie group $\mathcal M$ with the Lie algebra $\mathfrak m$, defined by relation \eqref{De}. Then by the first two relations of \eqref{xyzn} it satisfies condition (G1). Further, assume by contradiction that $D_\mg$ does not satisfy
condition (G2). Then from the last relation of \eqref{xyzn} it follows that there exists $l$, $w\leq l\leq k+w-1$ such that $c_{i,j}= 0$ for all $i,j$ such that $i+j=l$. But from relation \eqref{system 1} it follows that  $c_{i,j}= 0$ for all  $i,j$ such that $i+j\leq l$ in contradiction with condition (2) from Lemma \ref{lemma 4}. Conditions (1) and (2) from Lemma \ref{lemma 4} also
imply that $w_{D_{\mg}}=w$.
%So the correspondence between the equivalence classes of distributions from the  considered class having the algebra of infinitesimal symmetries of the dimension equal to the dimension of the bundle, where their canonical frames are constructed, and the set of bi-graded Lie algebras of type $(k,w)$ is onto.
It is also clear by constructions that the group $G$ is a subgroup of the group of symmetries of $D_\mg$.
This together with Corollary \ref{cor 3} implies that the algebra ${\rm symm}(D_\mg)$ of infinitesimal symmetries of $D_\mg$ is isomorphic to $\mg$ as a Lie algebra. Moreover
%according to Tanaka theory
${\rm symm}(D_\mg)$ has natural grading (\cite{tan1}, \cite {yamag}) and
the algebras ${\rm symm}(D_0)$ and $\mg$ are isomorphic as graded Lie algebras, where the grading on $\mg$ is considered with respect to the second bi-degree. Besides, using Definition \ref{kwtype} and relations \eqref{abxyz} it is not hard to show that ${\rm symm}(D_\mg)$ and $\mg$ are isomorphic as bi-graded Lie algebras. It shows that the correspondence $D\mapsto {\rm symm}(D)$ between  the set of all bi-graded Lie algebras of type $(k,w)$, given by  $D\mapsto {\rm symm}(D)$,
%of equivalence classes of germs of $(2k+1, 2k+3)$-distributions $D$, satisfying conditions (G1) and  (G2),
%relations $i_D\equiv 1$ and $w_D\equiv w$, and  having the algebra of infinitesimal symmetries of the dimension equal to the dimension of the bundle, where their canonical frames are constructed, to the set of all bi-graded Lie algebras of type $(k,w)$
is a bijection (with the inverse given by $\mg\mapsto D_{\mg}$). This completes the proof of the proposition.
%\item
%In particular, two non-isomorphic bi-graded Li algebras $\mg$ of type $(k,w)$ produce non-equivalent $(2k+1, 2k+3)$-distributions via %relation  \eqref{De}. So the considered correspondence is an injection, which completes the proof of proposition.
\end{proof}

Now let $\mathfrak L_{k,w}$ denote the set of all bi-graded Lie algebras of type $(k,w)$. Let $\mathfrak L_k=\displaystyle{\bigcup_w} \mathfrak L_{k,w}$,
if $k\nequiv 1 \mod 4$ and $\mathfrak L_k=\mathfrak L_{k,\frac {k+1}{2}}$ if $k\equiv 1 \mod 4$.
From Proposition \ref{redbigrade} it follows that if the set $\mathfrak L_k$ is not empty, then
the problem of finding of all maximally symmetric models of
$(2k+1,2k+3)$-distributions from the considered class is reduced to the description of the set $\mathfrak L_k$.
In the sequel we will do a little bit more, describing the sets $\mathfrak L_{k,w}$ including the case when $k\equiv 1 \mod 4$ but $w\neq \frac{k+1}{2}$. In particular, the set $\mathfrak L_k$ is not empty for any $k>2$ so that Proposition \ref{redbigrade} gives a way to describe the maximally symmetric models. Set

\begin{equation}
\label{dkw}
d(k,w)=\begin{cases}\left[\frac{l-w+1}{3}\right] &\text{ if } k=2l+1\\
\left[\frac{l-w-1}{3}\right]&\text{ if } k=2l
\end{cases}.
\end{equation}

The main result of this section is the following

\begin{theorem}\label{thm 3}
%Assume that $k>2$ and $w\leq\frac{k+1}{2}$ is odd.
The set of bi-graded Lie algebras of type $(k,w)$ is $d(k,w)$-parametric family.
%\begin{enumerate}
%\item
%If $k=2l+1$, then there is $\right[\frac{l-w+1}{3}\left]$-parametric family of bi-graded Lie algebras of type $(k,w)$;
%
%\item
%If $k=2l$, then there is $\left[\frac{l-w-1}{3}\right]$-parametric family of bi-graded Lie algebras of type $(k,w)$;
%
%\item
%If $w>\frac{k+1}{2}$ then the set of bi-graded Lie algebras of type $(k,w)$ is empty.
%\end{enumerate}
\end{theorem}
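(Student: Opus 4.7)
The plan is to use Lemma~\ref{lemma 4} to identify $\mathfrak L_{k,w}$ with the set of antisymmetric matrices $(c_{i,j})$ satisfying conditions (1)--(4) of that lemma, and then count solutions. Iterating the recursion \eqref{system 1} downward from row $0$ produces the closed formula
\[
c_{i,j}=\sum_{s=0}^{i}(-1)^s\binom{i}{s}a_{j+s},\qquad a_j:=c_{0,j},
\]
(where $a_j:=0$ for $j\notin[w,k]$), so an element of $\mathfrak L_{k,w}$ is completely encoded by the first-row data $a_w=1,\,a_{w+1},\ldots,a_k$.

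The first step is to impose the (inherent) antisymmetry $c_{0,j}+c_{j,0}=0$ for $j=0,1,\ldots,k$, which in coordinates reads
\[
((-1)^j+1)\,a_j+\sum_{s=w}^{j-1}(-1)^s\binom{j}{s}a_s=0.
\]
For $j$ even this solves uniquely for $a_j$ in terms of earlier $a_s$; for $j$ odd it is a formal constraint. Passing to the exponential generating function $\widehat A(x)=\sum a_j x^j/j!$ and setting $B(x):=e^{-x/2}\widehat A(x)$, the antisymmetry condition becomes equivalent to the vanishing of the even part of $B$ through order $k$; from this viewpoint it is immediate that the odd-$j$ equations are automatic consequences of the even-$j$ ones. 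After this reduction, the surviving degrees of freedom are precisely the $a_j$ with $j$ odd in $(w,k]$.

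The second step is to impose the pairing relations \eqref{system 2}, each of which links $a_i$ with $a_{k+w-i}$ for $w\le i\le k$. After substituting the even-indexed $a_j$'s from the first step, these become linear equations on the remaining odd parameters. Counting the number of \emph{independent} equations so obtained is the main obstacle. Explicit computation in small cases reveals a period-$3$ pattern of redundancies -- roughly, each triple of consecutive pairing relations carries only two independent pieces of information -- which explains the denominator $3$ in the definition of $d(k,w)$. I anticipate proving the dependence structure by induction on $k-w$, using binomial-coefficient identities to exhibit the linear relations among the substituted equations, together with the consistency relations coming from the first line of \eqref{system 2}. The cases $k$ odd and $k$ even require separate bookkeeping (producing the two branches of \eqref{dkw}) because the self-paired index $i=(k+w)/2$ occurs exactly when $k+w$ is even.

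Finally, part~(B) of Lemma~\ref{lemma 4} guarantees that any admissible choice of $(a_w,\ldots,a_k)$ satisfying the equations above determines a genuine bi-graded Lie algebra of type $(k,w)$, so combining the three steps yields $\dim\mathfrak L_{k,w}=d(k,w)$, completing the proof.
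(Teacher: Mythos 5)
Your overall framing—identify $\mathfrak L_{k,w}$ with the set of antisymmetric matrices $(c_{i,j})$ via Lemma~\ref{lemma 4} and then count solutions of a linear system—is the same as the paper's. The difference is the choice of coordinates: you parametrize by the first row $a_j=c_{0,j}$, whereas the paper uses the anti-diagonal values $x_i=\binom{k+w-1}{i+w-1}c_{i+w-1,k-i}$, which are then pushed through a chain of equivalent systems (Lemmas~\ref{lemma 5}, \ref{lemma 6}, \ref{lemma 7}) until the redundancies can be exhibited explicitly. Your first step (even-$j$ antisymmetry constraints determining even-index $a_j$'s, odd-$j$ ones being automatic, seen via the oddness of $B(x)=e^{-x/2}\widehat A(x)$) is plausible and roughly mirrors the paper's passage to system~\eqref{system 3}. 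A small caveat: the closed formula $c_{i,j}=\sum_s(-1)^s\binom{i}{s}a_{j+s}$ is only valid inside the band $w\le i+j\le k+w-1$ (the Jacobi-derived recursion \eqref{system 1} does not hold at $i+j=k+w-1$), so the vanishing for $i+j>k+w-1$ does \emph{not} come for free from the formula and has to be folded into the constraint count; you do not address this.

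The central problem, however, is that the heart of the theorem—the exact dimension count giving $d(k,w)$—is not actually established. You write that you ``anticipate proving the dependence structure by induction on $k-w$,'' having observed a period-$3$ pattern of redundancies in small cases; but an observed pattern plus an announced induction is not a proof. This is exactly where the paper invests its effort: it proves a lower bound $d(k,w)+1$ on $\dim$ of the unnormalized solution space by explicitly exhibiting linear dependences among the equations of system \eqref{system 7} via the binomial identity of Lemma~\ref{ident3lemma}, and it proves the matching upper bound by a reduction to the regime $w>\frac{k+1}{2}$ (Lemma~\ref{w>k}, via the injectivity/transversality argument for the map $\phi=\phi_2\circ\phi_1$) together with a separate determinant computation in the boundary case $w=\frac{k}{2}$. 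None of this work has a counterpart in your write-up. Until the claimed period-$3$ redundancy is actually proved—and the two parities of $k$ and the self-paired index $(k+w)/2$ are handled—the count $d(k,w)$ is unsubstantiated.

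There is also a scope issue at the end: you invoke part~(B) of Lemma~\ref{lemma 4} to conclude that any admissible first-row datum yields a genuine bi-graded Lie algebra of type $(k,w)$, but part~(B) is only stated for $w\ne\frac{k+1}{2}$. For $w=\frac{k+1}{2}$ the matrix computation only gives ``at most one'' algebra, and existence has to be supplied independently; the paper does this via the $\mathfrak{sl}_2(\mathbb R)$-module construction from the Introduction. Your proposal does not address this case.
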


\begin{remark}
{\rm In particular, if $d(k,w)=0$, then there exists the unique bi-graded Lie algebra of the type $(k,w)$, while if $d(k,w)<0$, then the set of bi-graded Lie algebras of the type $(k,w)$ is empty.} $\Box$
\end{remark}

The proof of this theorem together with Lemma \ref{lemma 4} will give a rather explicit description of all these Lie algebras.
As a direct consequence of Theorem \ref{thm 3} and Proposition \ref{redbigrade} we have the following
\begin{corollary}\label{cor 6}
Let $k>2$

\begin{enumerate}
\item
If $k\equiv 1\mod 4$ and $w=\frac{k+1}{2}$ then there is a unique, up to a local equivalence, $(2k+1,2k+3)$-distribution $D$ satisfying conditions (G1) and (G2), which have $(2k+7)$-dimensional infinitesimal symmetry algebra.
\item
If $w$ is odd and $w\neq \frac{k+1}{2}$,
%is an arbitrary odd number
then
%for any odd $w<\frac{k+1}{2}$
the set of $(2k+1,2k+3)$-distributions $D$ from the considered class that satisfy $w_D=w$ and have $(2k+6)$-dimensional infinitesimal symmetry algebra is a
%$\left[\frac{l-w+1}{3}\right]$
$d(k,w)$-parametric family.
%If $k=2l$,
%%is even
%then for any even $w<\frac{k1}{2} the set of $(2k+1,2k+3)$-distributions $D$ from the considered class which satisfy $w_D=w$ and %$have $2k+6$-dimensional infinitesimal symmetry algebra  is a $\left[\frac{l-w-1}{3}\right]$-parametric family of %$$(2k+1,2k+3)$-distributions from the considered class with $2k+6$-dimensional infinitesimal symmetry algebra.
\end{enumerate}

If $k\nequiv 1 \mod 4$ the families of distributions from the item (2) above are the only distributions from the considered class with $(2k+6)$-dimensional infinitesimal symmetry algebra.
\end{corollary}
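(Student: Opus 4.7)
\textbf{Proof proposal for Corollary \ref{cor 6}.}
The plan is to assemble the statement from three ingredients already at hand: Theorem \ref{thm 2}, which identifies the bundles on which the canonical frame of any distribution from the considered class lives; Proposition \ref{redbigrade}, which provides a bijection between $\mathrm{MS}(k,w)$ and the set $\mathfrak L_{k,w}$ of bi-graded Lie algebras of type $(k,w)$; and Theorem \ref{thm 3}, which describes $\mathfrak L_{k,w}$ as a $d(k,w)$-parametric family. No hard argument remains; the plan is essentially bookkeeping together with one short arithmetic check.

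First I would record the a priori upper bounds on the dimension of the infinitesimal symmetry algebra by reading off the dimensions of the frame bundles from Theorem \ref{thm 2}: the canonical frame lives on $B_1$ of dimension $2k+6$ when $i_D\equiv 1$ and $w_D\neq\frac{k+1}{2}$, on $B_2$ of dimension $2k+7$ when $i_D\equiv 1$ and $w_D=\frac{k+1}{2}$, and on a reduction $B_3$ of dimension $2k+5$ when $i_D>1$. For $k>2$ maximal symmetry can therefore occur only in the first two cases, and the bound is attained exactly when every structural function of the canonical frame is a constant, i.e.\ precisely when $D$ represents an element of $\mathrm{MS}(k,w_D)$. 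By Lemma \ref{lemma 2} the invariant $w_D$ is odd, so the value $w_D=\frac{k+1}{2}$ is possible only when $k\equiv 1\mod 4$.

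Next I would apply Proposition \ref{redbigrade} to replace $\mathrm{MS}(k,w)$ by $\mathfrak L_{k,w}$ and Theorem \ref{thm 3} to conclude that this set forms a $d(k,w)$-parametric family. This already proves part (2): for odd $w$ with $w\neq\frac{k+1}{2}$ the distributions from the considered class with $w_D=w$ and a $(2k+6)$-dimensional symmetry algebra form a $d(k,w)$-parameter family. For part (1), suppose $k\equiv 1\mod 4$, and write $k=4m+1$, so that $k=2l+1$ with $l=2m$ and $w=\frac{k+1}{2}=2m+1$. Formula \eqref{dkw} then gives
\[
d\!\left(k,\tfrac{k+1}{2}\right)=\left\lfloor\tfrac{l-w+1}{3}\right\rfloor=\left\lfloor\tfrac{2m-(2m+1)+1}{3}\right\rfloor=0,
\]
so $\mathfrak L_{k,(k+1)/2}$ consists of a single isomorphism class and yields a unique, up to local equivalence, model with a $(2k+7)$-dimensional symmetry algebra.

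Finally, the concluding sentence of the corollary follows from the initial observation: when $k\not\equiv 1\mod 4$ the possibility $w_D=\frac{k+1}{2}$ is excluded by Lemma \ref{lemma 2}, so the dimension $2k+7$ is never reached and the only distributions with a $(2k+6)$-dimensional symmetry algebra are those described in part (2). The only step that is not purely formal is the arithmetic verification $d(k,\frac{k+1}{2})=0$ above, and I expect the main conceptual obstacle in writing this out to be the careful tracking of the different bundles $B_1,B_2,B_3$ and the corresponding case distinctions in $w_D$ modulo the parity condition coming from Lemma \ref{lemma 2}.
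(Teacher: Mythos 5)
Your proposal is correct and follows the same route as the paper, which simply declares Corollary \ref{cor 6} to be a direct consequence of Theorem \ref{thm 3} and Proposition \ref{redbigrade}; you have merely filled in the intermediate bookkeeping (dimensions $2k+6$, $2k+7$, $2k+5$ of $B_1$, $B_2$, $B_3$, the parity constraint from Lemma \ref{lemma 2}, and the arithmetic check $d(k,\tfrac{k+1}{2})=0$ for $k\equiv 1 \bmod 4$). Nothing is missing and the approach is the intended one.
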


{\bf Proof of Theorem \ref {thm 3}.}
By Lemma \ref{lemma 4} in the case $w\neq \frac{k+1}{2}$ the proof of the theorem is reduced to the search  of all antisymmetric matrix $(c_{i,j})$ satisfying conditions (1)-(4) of Lemma \ref{lemma 4}, while in the case $w=\frac{k+1}{2}$ Lemma \ref{lemma 4} guarantees that all bi-graded Lie algebras of type $(k,w)$ are obtained from a subset of such matrices.
%The proof consists of several lemmas
%Our goal is to characterize all antisymmetric matrices $(c_{i,j})$ satisfying conditions of Lemma \ref{lemma 4}.
In a series of lemmas below we bring conditions (3) and (4) of Lemma \ref{lemma 4} in more convenient form.
By relation \eqref{system 1}, all coefficients $c_{i,j}$ are completely determined by $c_{w-1+i,k-i}$ for $i=0,\ldots,k-w+1$. Denote
\begin{equation}
\label{subst1}
x_i={k+w-1\choose i+w-1}c_{i+w-1,k-i}.
\end{equation}
Since $(c_{i,j})$ is antisymmetric, we have
\begin{equation}\label{system 3}
x_i+x_{k-w+1-i}=0, \quad i=0,1,\ldots,\left[\frac{k-w+1}{2}\right].
\end{equation}
%since the matrix $(c_{i,j})$ is anti-symmetric.
\begin{lemma}\label{lemma 5}
Systems \eqref{system 1} and \eqref{system 2} imply
\begin{equation}\label{system 4}
(-1)^i\sum_{j=0}^ix_j=\sum_{j=0}^i{k-j\choose i-j}x_j,
\end{equation}
for $i=0,\ldots,k-w+1$.
\end{lemma}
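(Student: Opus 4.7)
The plan is to show that \eqref{system 4} is equivalent to the single intermediate identity
\begin{equation*}
(-1)^i\sum_{j=0}^{i} x_j \;=\; \binom{k+w-1}{i+w-1}\,c_{0,k-i},
\end{equation*}
and then establish this identity by induction on $i$. To handle the right-hand side of \eqref{system 4} first, I would iterate the Pascal-type recursion $c_{i,j} = c_{i+1,j} + c_{i,j+1}$ of \eqref{system 1} to derive, by induction on $m$, the expansion $c_{0,j} = \sum_{a=0}^{m}\binom{m}{a}\,c_{a,j+m-a}$, valid for $j+m \le k+w-1$. Taking $m = k+w-1-j$ places the sum on the top antidiagonal $a+b = k+w-1$, where the only nonzero contributions come from $a \in \{w-1,\dots,k\}$. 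Substituting $a = b+w-1$, invoking the definition $x_b = \binom{k+w-1}{b+w-1}\,c_{b+w-1,k-b}$, and applying the Vandermonde-type identity $\binom{k+w-1}{i+w-1}\binom{i+w-1}{b+w-1} = \binom{k+w-1}{b+w-1}\binom{k-b}{i-b}$ shows that the right-hand side of \eqref{system 4} equals $\binom{k+w-1}{i+w-1}\,c_{0,k-i}$, giving the claimed reduction.

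For the reduced identity, the base case $i=0$ amounts to $c_{w-1,k} = c_{0,k}$, which follows by iterating $c_{a,k} = c_{a+1,k} + c_{a,k+1} = c_{a+1,k}$ for $a = 0,\dots,w-2$ (the second summand vanishing because $c_{a,k+1}$ lies outside the range of nonzero entries by item (1) of Lemma \ref{lemma 4}). For the inductive step, subtracting the $(i-1)$-th instance from the $i$-th reduces the task to a single equality for $x_i$. Applying the first equation of \eqref{system 2} after the index shift $i\mapsto i+w-1$ rewrites $x_i = \binom{k+w-1}{i+w-1}\bigl[(-1)^{i+w-1}c_{0,k-i} - (-1)^{k-i}c_{0,i+w-1}\bigr]$; since $w$ is odd by Lemma \ref{lemma 2}, the $c_{0,k-i}$-coefficient matches directly. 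The second equation of \eqref{system 2} then supplies precisely the proportionality between $c_{0,i+w-1}$ and $c_{0,k-i+1}$, with the ratio $(k-i+1)/(i+w-1) = \binom{k+w-1}{i+w-1}/\binom{k+w-1}{i+w-2}$ that closes the induction.

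The main obstacle is the bookkeeping in the inductive step: one must carefully track signs depending on the parities of $i$, $w$, and $k$, and verify that the ratio arising from \eqref{system 2} matches the ratio of binomial coefficients exactly. The assumption that $w$ is odd (Lemma \ref{lemma 2}) is essential for the $c_{0,k-i}$-terms to align without an extra correction. A secondary subtlety is that the Pascal recursion \eqref{system 1} degenerates on the boundary $i+j = k+w-1$ (because the would-be bracket $[E,\by_k]$ vanishes in the trivial grading component $\mg_{-(k+w),-2}$), so the base-case calculation and all out-of-range substitutions must be handled using the vanishing convention made explicit in item (1) of Lemma \ref{lemma 4}.
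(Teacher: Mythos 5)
Your argument is essentially the paper's: both derive \eqref{system 4} by expressing $\binom{k+w-1}{i+w-1}c_{0,k-i}$ in two ways, once by iterating the Pascal recursion \eqref{system 1} along the top antidiagonal (this is the paper's ``on the other hand'' formula $c_{0,k-i}=\sum_{j=0}^{i}\binom{i+w-1}{j+w-1}y_j$, which you derive explicitly) and once by an induction driven by \eqref{system 2}, then converting $y_j$ to $x_j$ and applying the same Vandermonde-type identity $\binom{i+w-1}{j+w-1}\binom{k+w-1}{i+w-1}=\binom{k+w-1}{j+w-1}\binom{k-j}{i-j}$. Two small caveats worth noting: in the base case, $c_{a,k+1}$ vanishes not because of the range condition in item (1) of Lemma \ref{lemma 4} (that condition only covers $a\geq w-1$) but simply because $\bx_{k+1}$ does not exist, so the index is out of range; and the ratio $(k-i+1)/(i+w-1)$ you extract from the second equation of \eqref{system 2} is consistent with what the paper's own proof uses (it produces $c_{0,w}=\frac{w}{k}(-1)^{k+1}y_0$), but not with the second line of \eqref{system 2} exactly as printed in Lemma \ref{lemma 4}, which appears to carry a typo (it should read $(-1)^{i+1}i\,c_{0,k+w-i}-(-1)^{k+w-i+1}(k+w-i)\,c_{0,i}=0$). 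You have implicitly corrected this, so your inductive step is valid.
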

\begin{proof}
Denote $y_i=c_{i+w-1,k-i}$. We will use \eqref{system 2} and express $c_{0,i}$ in terms of $y_i$. At the beginning $c_{0,k}=y_0$, as follows from the first equation of \eqref{system 2} with $i=w-1$. Then the second equation of \eqref{system 2} gives $c_{0,w}=\frac{w}{k}(-1)^{k+1}y_0$. In the next step we again use the first equation of \eqref{system 2} with $i=w$ and get
$c_{0,k-1}=-y_1-\frac{w}{k}y_0$. Then we proceed by induction and get the formula:
$$
c_{0,k-i}=(-1)^i\sum_{j=0}^i\frac{{k+w-1\choose j+w-1}}{{k+w-1\choose i+w-1}}y_j.
$$
On the other hand it follows from \eqref{system 1} that
$$
c_{0,k-i}=\sum_{j=0}^i{i+w-1\choose j+w-1}y_j.
$$
Now, if we substitute $x_i={k+w-1\choose i+w-1}c_{i+w-1,k-i}$, compare the two expressions for $c_{o,i}$ and use the formula
$$
{i+w-1\choose j+w-1}{k+w-1\choose i+w-1}={k+w-1\choose j+w-1}{k-j\choose i-j}
$$
we get the desired system \eqref{system 4}.
\end{proof}

\begin{remark}
%It follows
\label{rem45}
{\rm
From the proof of Lemmas \ref{lemma 5} it is not hard to see that the space of common solutions of systems \eqref{system 3} and \eqref{system 4} is in one-to one correspondence with the space of antisymmetric matrices $(c_ij)$, satisfying conditions (1), (3), and (4) of Lemma \ref{lemma 4} and the correspondence is given by relation \eqref{subst1}.} $Box$
\end{remark}

Now we analyze the solution space of system \eqref{system 4}.

%in the case $w\neq \frac{k+1}{2}$ a bi-graded Lie algebra of type $(k,w)$ %is completely described by common solution of systems \eqref{system 3} and \eqref{system 4} normalized by condition (2) of Lemma %\ref{lemma 4}, while in the case $w=\frac{k+1}{2}$  a set of  bi-graded Lie algebras of type $(k,w)$ corresponds in general to a %subspace of the space of common solutions of systems \eqref{system 3} and \eqref{system 4} (which, as we will see later, coincides in %fact with the whole space of solutions as in the previous case normalized by condition (2) of Lemma \ref{lemma 4}).
\begin{lemma}\label{lemma 6}
The solution space of \eqref{system 4} is isomorphic to the solution space of the system
\begin{equation}\label{system 6}
\sum_{j=0}^{i-1}{w+i\choose w+j}x_j=0,
\end{equation}
for $i=2,4,6,\ldots,2\left[\frac{k-w+2}{2}\right]$. Moreover, the isomorphism preserves the solution space of \eqref{system 3}.
\end{lemma}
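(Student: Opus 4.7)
The plan is to exhibit an explicit linear isomorphism between the two solution spaces by parameterizing each via the even-indexed coordinates $x_0, x_2, x_4, \ldots$, matching the recursions that determine the odd-indexed ones, and finally verifying that a natural symmetry of the isomorphism ensures compatibility with \eqref{system 3}.

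First I would analyze the parity structure of \eqref{system 4}. In the equation at index $i$, the coefficient of $x_i$ equals $(-1)^i - 1$: it vanishes for $i$ even and equals $-2$ for $i$ odd. Hence the odd-indexed equations solve recursively for $x_i$ in terms of $x_0, \ldots, x_{i-1}$, whereas the even-indexed ones are relations purely among $x_0, \ldots, x_{i-1}$. The key technical step, which I expect to be the main obstacle, is to show that every even-indexed equation is a linear combination of the preceding odd-indexed equations. I would form the difference of \eqref{system 4} at consecutive indices $i$ (even) and $i-1$, simplifying via Pascal's rule to obtain
\begin{equation*}
\sum_{j=0}^{i-1}\Bigl(2-\binom{k-j}{i-j}+\binom{k-j}{i-1-j}\Bigr)x_j=0,
\end{equation*}
and then proceed by induction on even $i$, expressing this relation as a linear combination of \eqref{system 4} at odd indices $1, 3, \ldots, i-1$ via systematic binomial manipulations. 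The outcome is that \eqref{system 4} is equivalent to its $\left[\frac{k-w+2}{2}\right]$ odd-indexed equations, so its solution space is freely parameterized by the even coordinates with explicit binomial recursions for the odd ones.

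Next I would treat \eqref{system 6} similarly: since the coefficient of $x_{i-1}$ in the $i$-th equation equals $w+i\neq 0$, each equation solves for $x_{i-1}$ in terms of $x_0, \ldots, x_{i-2}$, yielding an analogous parameterization of the solution space by the even coordinates. To build the isomorphism I would define a diagonal map $\phi(x)_j = c_j x_j$ with $c_0 = 1$, determining the remaining $c_j$ inductively by requiring that $\phi$ transform the \eqref{system 4}-recursion into the \eqref{system 6}-recursion at each odd index. The nontrivial combinatorial content is that a single sequence $(c_j)$ satisfies all compatibility constraints simultaneously; this reduces to a family of binomial identities that can be checked by induction.

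Finally, to show that $\phi$ preserves \eqref{system 3}, I would derive from the explicit formula for $c_j$ the symmetry $c_j = c_{k-w+1-j}$, which reflects the shared invariance of \eqref{system 4} and \eqref{system 6} under the involution $j\leftrightarrow k-w+1-j$. Since $\phi$ is diagonal with symmetric weights, it commutes with this involution, whose $(-1)$-eigenspace is precisely the solution space of \eqref{system 3}. Hence $\phi$ restricts to an isomorphism between the intersections of \eqref{system 3} with the two solution spaces, which is the required preservation claim.
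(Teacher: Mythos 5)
Your overall strategy (reduce the system to roughly half its equations, exhibit a diagonal change of variables, and use a symmetry of the diagonal weights to preserve \eqref{system 3}) is the right shape and is indeed the shape of the paper's argument. However there are two concrete gaps.

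First, you form the \emph{difference} of consecutive equations of \eqref{system 4} and say it ``simplifies via Pascal's rule''; it does not. Your displayed coefficient $2-\binom{k-j}{i-j}+\binom{k-j}{i-1-j}$ is correct but does not collapse to a single binomial. The paper instead forms the \emph{sum} of consecutive equations, where Pascal gives $\binom{k-j}{i-j}+\binom{k-j}{i-1-j}=\binom{k+1-j}{i-j}$, producing the clean intermediate system
\[
\sum_{j=0}^{i-1}\binom{k+1-j}{i-j}x_j+\gamma_i x_i=0,\qquad \gamma_i=0\text{ ($i$ even)},\ \gamma_i=2\text{ ($i$ odd)}.
\]
This is not a cosmetic point: it is precisely this summed form that admits a clean diagonal substitution.

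Second, and more seriously, the inductive determination of the diagonal weights $c_j$ ``by matching recursions at each odd index'' is over-determined in a way that fails if you try to match equation-to-equation. Pairing the equation of \eqref{system 4} at odd $i$, namely $\sum_{j\le i}\bigl[\binom{k-j}{i-j}+1\bigr]x_j=0$, with the equation of \eqref{system 6} at $i+1$ forces $c_1/c_0=(w+1)/(k+1)$ from $i=1$, but forces $c_1/c_0=(w+1)\bigl[\binom{k-1}{2}+1\bigr]\big/\bigl(4\bigl[\binom{k}{3}+1\bigr]\bigr)$ from $i=3$, and already for $k=5$ these disagree. A diagonal map does exist on the level of \emph{solution spaces}, but it does not send individual equations of \eqref{system 4} to individual equations of \eqref{system 6}; the consistency of the weights can only be seen after fully unwinding the recursion, and you leave exactly this (``a family of binomial identities that can be checked by induction'') unproved, without even exhibiting $c_j$ explicitly. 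The paper sidesteps the whole issue: after passing to the summed system, the diagonal substitution $x_j\mapsto\binom{k+w+1}{w+j}x_j$ takes the $i$-th even equation to a nonzero multiple of the $i$-th equation of \eqref{system 6} because of the single identity $\binom{k+1-j}{i-j}\binom{k+w+1}{w+j}=\frac{(k+w+1)!}{(k+1-i)!(w+i)!}\binom{w+i}{w+j}$, and the symmetry $\binom{k+w+1}{w+j}=\binom{k+w+1}{w+(k-w+1-j)}$ yields preservation of \eqref{system 3} immediately. The paper also handles the rank bookkeeping (including the extra equation at $i=k-w+2$ for odd $k$, and the echelon-type independence of the even-indexed equations) explicitly, whereas your step showing that the even-indexed equations of \eqref{system 4} are redundant is asserted but not established.
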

\begin{proof}
If we sum equations corresponding to the indices $i-1$ and $i$ from the system \eqref{system 4} for $i=1,\ldots,k-w+1$, we get the following system of equations
\begin{equation}\label{system 5}
\sum_{j=0}^{i-1}{k+1-j\choose i-j}x_j+\gamma_ix_i=0,
\end{equation}
%for $i=1,\ldots,k-w+1$,
where $\gamma_i=0$ if $i$ is even and $\gamma_i=2$ if $i$ is odd. The first equation from the system \eqref{system 4} with $i=0$ is trivial and we can cross it out.
If $k$ is odd we consider the additional equation with $i=k-w+2$
\begin{equation}\label{system 5a}
\sum_{j=0}^{k-w+1}{k+1-j\choose k-w+1-j}x_j=0.
\end{equation}
We will show later that this equation is a consequence of the other equations from the system \eqref{system 5}.
%At the moment we have a linear system $Ax=0$, where $A$ is $(k-w+2)\times(k-w+2)$ matrix if $k$ is odd or $(k-w+1)\times(k-w+1)$ %matrix if $k$ is even. The matrix $A$ has block lower triangle structure and it is clear that the rank of $A$ is at least %$\frac{k-w+2}{2}$ if $k$ is odd or at least $\frac{k-w+1}{2}$ if $k$ is even.

For any $l=0,\ldots, \left[\frac {k-w}{2}\right]$ the following tuple
\begin{equation}
\label{sol}
\{x_j\}_{j=0}^{2\left[\frac{k-w+2}{2}\right]-1}=\left(\underbrace{0,\ldots, 0}_{2l \text{ times}},\left\{(-1)^j\binom{k-2l+2}{j-2l+1}\right\}_{j=2l}^{2\left[\frac{k-w+2}{2}\right]-1}
%-{k-2l+2\choose 2},\ldots,
%{k-2l+2\choose k-w+1-2l}
%-{k-2l+2\choose k-w+2-2l}
\right)
\end{equation}
is the solution of the system \eqref{system 5} (note that if $k$ is even, then $x_{k-w+1}$ is not involved in system \eqref{system 5}).
%-\eqref{system 5a}.
Indeed, substituting it to this system we get
%the following combinatorial identities:
\begin{equation}
\label{comb1}
\sum_{j=2l-1}^i(-1)^j{k+1-j\choose i-j}\binom{k-2l+2}{j-2l+2}=\sum_{j=2l-1}^i(-1)^j\frac{(k-2l+2)!}{(k-i+1)!(j-2l+1)!(i-j)!}.
\end{equation}
But the right-hand side of the last identity is equal to $0$. To prove this fact express $t^{k-2l+2}=(t-1+1)^{k-2l+2}$ and expand
$(t-1+1)^{k-2l+2}$ into the trinomial expansion. Then the right-hand side of \eqref{comb1} is equal to the coefficient of $t^{k-i+1}$ in this expansion multiplied by $-1$. Therefore it is equal to $0$.

It implies that the rank of the system \eqref{system 5} ( with additional equation \eqref{system 5a} in the case of odd $k$) is at most $\left[\frac{k-w+2}{2}\right]$. On the other hand, from the block lower triangular structure of this system
%\eqref{system 5}-\eqref{system 5a}
it follows
that the equations \eqref{system 5} with even $i$ (together with equation \eqref{system 5a} in the case of odd $k$) are linearly independent. So, the rank of this system
%\eqref{system 5}-\eqref{system 5a}
is equal to $\left[\frac{k-w+2}{2}\right]$ and all equations  of \eqref{system 5} with odd $i$ can be dropped.

%Similarly, if $k$ is even then for any $l=0,\ldots, \frac {k-w-1}{2}$ the tuple \eqref{sol} without the last entry is the solution of the system \eqref{system 5}. Therefore in this case the rank of the system \eqref{system 5}  is at most $\frac{k-w+1}{2}$.
%On the other hand, the equations \eqref{system 5} with even $i$ together with equation \eqref{system 5a} are linearly independent.
%So, the rank of the system \eqref{system 5} is equal to $\frac{k-w+1}{2}$ and all equations  of this system with odd $i$ can be dropped.
%$$
%\begin{array}{ccccccc}
%{k+2\choose 1},&-{k+2\choose 2},&{k+2\choose 3},&-{k+2\choose 4},&{k+2\choose 5},&\ldots&{k+2\choose k-w+2}\\
%0,&0,&{k\choose 1},&-{k\choose 2},&{k\choose 3},&\ldots&{k\choose k-w}\\
%0,&0,&0,&0,&{k-2\choose 1},&\ldots&{k-2\choose k-w-2}\\
%&&&&&\vdots&\\
%0,&0,&0,&0,&0,&\ldots&{w+2\choose 2}
%\end{array}
%$$
%and we check that each of them is a solution of \eqref{system 5} and equation \eqref{system 5a}. Therefore the rank of $A$ is at most $\frac{k-w+2}{2}$. If $k$ is even we take the analogous sequences, but without the last entry, and we prove that the rank of $A$ is at most $\frac{k-w+1}{2}$ in this case.
%
%As a conclusion we get that $A$ has rank $\frac{k-w+2}{2}$ if $k$ is odd or $A$ has rank $\frac{k-w+1}{2}$ if $k$ is even. Therefore, we can omit every second equation from the system \eqref{system 5} (extended by \eqref{system 5a} in the case of odd $k$): we take equations with even $i$. In particular we take equation \eqref{system 5a} if $k$ is odd.

Finally,
%$$
%\begin{array}{ccccccc}
%{k+w+1\choose w},&{k+w+1\choose w+1},&{k+w+1\choose w+2},&{k+w+1\choose w+3},&{k+w+1\choose w+4},&\ldots&{k+w+1\choose k+1}\\
%\end{array}
%$$
the substitution
\begin{equation}
\label{subst}
x_j:={k+w+1\choose w+j}x_j
\end{equation}
for $j=0,\ldots,k-w+1$ transform system \eqref{system 5} to system \eqref{system 6}. Moreover ${k+w+1\choose w+j}={k+w+1\choose w+(k-w+1-j)}$. Hence, the substitution preserves system \eqref{system 3}.
\end{proof}

\begin{lemma}\label{lemma 7}
The solution space of \eqref{system 6} is isomorphic to the solution space of the system
\begin{equation}\label{system 7}
\sum_{j=0}^{2i-1}{\frac{w-1}{2}+i\choose \frac{w+1}{2}-i+j}x_j=0
\end{equation}
for $i=1,2,\ldots,\left[\frac{k-w+2}{2}\right]$, where ${a\choose b}=0$ if $b<0$. Moreover, the isomorphism preserves the solution space of \eqref{system 3}.
\end{lemma}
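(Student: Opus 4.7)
I plan to show that systems \eqref{system 6} and \eqref{system 7} have the same solution space in $\mathbb{R}^{k-w+2}$; the isomorphism of the statement is then the identity map on variables, and its compatibility with \eqref{system 3} is automatic.

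The first step reveals the triangular structure of both systems. Applying the symmetry $\binom{n}{m}=\binom{n}{n-m}$, the $i$-th equation of \eqref{system 7} rewrites as $\sum_{j=0}^{2i-1}\binom{(w-1)/2+i}{2i-1-j}x_j=0$, with coefficient $1$ on $x_{2i-1}$. After re-indexing \eqref{system 6} by $I:=i/2$, its $I$-th equation reads $\sum_{j=0}^{2I-1}\binom{w+2I}{w+j}x_j=0$, with coefficient $w+2I$ on $x_{2I-1}$. Both systems thus consist of $N:=\left[\frac{k-w+2}{2}\right]$ equations that are upper triangular with respect to the ordering $x_{2N-1},\,x_{2N-2},\,\ldots,\,x_0$; both have rank $N$, and their solution spaces have the same codimension in $\mathbb{R}^{k-w+2}$.

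Let $r^{(s)}_I$ denote the row vector of the $I$-th equation of system $s$. I intend to prove by induction on $I$ that $r^{(7)}_I\in\spn\{r^{(6)}_1,\ldots,r^{(6)}_I\}$; combined with the rank count above, this forces equality of the row spans, and hence of the solution spaces. The base case $I=1$ is immediate since both rows have support $\{x_0,x_1\}$ and are therefore proportional. For the inductive step I would set $\alpha_I:=1/(w+2I)$, so that $v_I:=r^{(7)}_I-\alpha_I r^{(6)}_I$ has vanishing $x_{2I-1}$-coefficient and hence is supported on $\{x_0,\ldots,x_{2I-2}\}$. Using the inductive hypothesis together with the triangularity of Step~1, one successively cancels the coefficients of $v_I$ on $x_{2I-2},\,x_{2I-3},\,\ldots,\,x_0$ by subtracting appropriate multiples of $r^{(6)}_{I-1},\,r^{(6)}_{I-2},\,\ldots$, finally expressing $v_I$ as a linear combination of $r^{(6)}_1,\ldots,r^{(6)}_{I-1}$.

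The coefficient-matching in the inductive step reduces to a binomial identity that I expect to derive from the Vandermonde convolution
$$\binom{w+2I}{w+j}=\sum_{k=0}^{w+j}\binom{(w-1)/2+I}{k}\binom{(w+1)/2+I}{w+j-k},$$
by isolating the term $k=(w+1)/2-I+j$, which produces $\binom{(w-1)/2+I}{(w+1)/2-I+j}$ multiplied by $\binom{(w+1)/2+I}{(w-1)/2+I}=(w+1)/2+I$, and recognizing the remaining sum as a combination of $\binom{w+2m}{w+j}$ with $m<I$ by induction. This bookkeeping is the main obstacle; it is similar in spirit to the manipulations in the proofs of Lemmas \ref{lemma 5} and \ref{lemma 6}, differing only in the specific hypergeometric identity that must be verified. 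Once the identity is established, equality of the solution spaces follows from Step~1, and the preservation of \eqref{system 3} is tautological since the isomorphism is the identity map.
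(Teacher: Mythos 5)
Your overall strategy — show that both systems have rank $N=\left[\frac{k-w+2}{2}\right]$ by triangularity, then establish the one containment $\mathrm{rowspan}\{r^{(7)}_I\}\subseteq\mathrm{rowspan}\{r^{(6)}_I\}$ (equivalently, solutions of \eqref{system 6} solve \eqref{system 7}), and conclude by equality of codimensions — is a legitimate, essentially dual route to the paper's. However, the execution of the containment step has genuine gaps. First, the base case is misjustified: two nonzero vectors supported on $\{x_0,x_1\}$ need not be proportional; it is true here (one can compute $r^{(6)}_1=(w+2)\,r^{(7)}_1$) but this must be computed, not asserted from the support. Second, and more seriously, the inductive step as stated does not go through. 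The rows $r^{(6)}_1,\ldots,r^{(6)}_{I-1}$ have their pivots on the odd-indexed variables $x_1,x_3,\ldots,x_{2I-3}$; subtracting multiples of them can cancel those coordinates of $v_I$ but has no control over the even-indexed coordinates $x_0,x_2,\ldots,x_{2I-2}$. In particular, $x_{2I-2}$ is untouched by all the $r^{(6)}_{<I}$, so one must first \emph{verify} that the $x_{2I-2}$-coefficient of $v_I=r^{(7)}_I-\frac{1}{w+2I}r^{(6)}_I$ vanishes — it does, but this is a nontrivial binomial cancellation, not an automatic consequence of triangularity. After the odd-index pivots are matched, one must further verify that all remaining even-index coordinates vanish as well. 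That verification is precisely the content of the lemma, and your proposal defers it to an unproven ``binomial identity'' whose exact statement is not pinned down (your Vandermonde outline singles out the term $\binom{(w+1)/2+I}{(w-1)/2+I}=\frac{w+1}{2}+I$, which is not $w+2I$, so the way it is supposed to produce $\alpha_I=1/(w+2I)$ and the residual combination is unclear).

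For comparison, the paper avoids the row-reduction induction entirely: it carries the explicit basis of the solution space of \eqref{system 6} (the vectors in \eqref{sol3}, inherited from \eqref{sol} via \eqref{subst}), proves the single closed-form identity \eqref{ident2} by a short generating-function argument, and then verifies by direct substitution that each basis vector annihilates every equation of \eqref{system 7}; equality then follows from the common rank. The paper's route replaces an open-ended inductive bookkeeping by one identity with a clean proof, which is why it is shorter and self-contained. Your approach could be completed, but you would need to (i) compute the base case honestly, and (ii) state and prove the precise identity that makes $v_I$ lie in $\mathrm{span}\{r^{(6)}_1,\ldots,r^{(6)}_{I-1}\}$, including the vanishing of all even-index coordinates — at which point you would essentially be re-deriving an identity of the same strength as \eqref{ident2}.
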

\begin{proof}
%The solution space of \eqref{system 6} is spanned by the following sequences:
%$$
%\begin{array}{ccccccc}
%{w\choose 0},&-{w+1\choose 0},&{w+2\choose 0},&-{w+3\choose 0},&{w+4\choose 0},&\ldots&{k+1\choose 0}\\
%0,&0,&{w+2\choose 2},&-{w+3\choose 2},&{w+4\choose 2},&\ldots&{k+1\choose 2}\\
%0,&0,&0,&0,&{w+4\choose 4},&\ldots&{k+1\choose 4}\\
%&&&&&\vdots&\\
%\end{array}
%$$
From \eqref{sol} and the substitution \eqref{subst} it follows that the solution space of \eqref{system 6} is spanned by the following tuples
\begin{equation}
\label{sol3}
\left(\underbrace{0,\ldots, 0}_{2l \text{ times}},\left\{(-1)^j\binom{w+j}{w+2l-1}\right\}_{j=2l}^{k-w+1}
%\binom{w+j}{1},
%-{w+j\choose 2},\ldots,
%{k-2l+2\choose k-w+1-2l}
%-{k-2l+2\choose k-w+2-2l}
\right)
\end{equation}
with $l=0,\ldots,\left[\frac {k-w}{2}\right]$.
We claim that \eqref{system 7} has the same solution space. For this first prove the following identity
\begin{equation}
\label{ident2}
\sum_{j\in\mathbb Z}(-1)^j{l\choose j}{s+j\choose m}=
{s\choose m-l},
\end{equation}
where ${a\choose b}=0$ if $b<0$ or $b>a$.
Consider the following polynomial $f(t)=(t+1)^st^l$. On the one hand, the coefficient of $t^m$ in $f$ is equal to  ${s\choose m-l}$. On the other hand,
$$f(t)=(t+1)^s\bigl((t+1)-1\bigr)^l=\sum_{j\in\mathbb Z}(-1)^j{l\choose j}(t+1)^{s+j},$$
so that the coefficient of $t^m$ in $f$ is equal to the left-hand side of \eqref{ident2}. The proof of  identity \eqref{ident2} is completed.

%First we prove the following formula
%\begin{equation}\label{choose}
%\sum_{j\in\mathbb Z}(-1)^j{l\choose t+j}{s+j\choose m}={s-t\choose m-l},
%\end{equation}
%where ${a\choose b}=0$ if $b<0$ or $b>a$. It is sufficient to prove \eqref{choose} for $t=0$ only. We have
%\begin{eqnarray*}
%\sum_{j\geq 0}(-1)^j{l\choose j}{s+j\choose m}&=&
%\sum_{j\geq 0}\sum_{i=0}^j(-1)^j{l\choose j}{j\choose i}{s\choose m-i}\\
%&=&\sum_{j\geq 0}\sum_{i=0}^j(-1)^j{l\choose i}{l-i\choose j-i}{s\choose m-i}\\
%&=&\sum_{i\geq 0}\left(\sum_{j\geq i}(-1)^j{l-i\choose j-i}\right){l\choose i}{s\choose m-i}.
%\end{eqnarray*}
%(We can interchange the summation order since all sums are finite.) But $\sum_{j\in\mathbb Z}(-1)^j{l-i\choose j-i}=0$ for $i\neq l$ and $\sum_{j\in\mathbb Z}(-1)^j{l-i\choose j-i}=1$ for $i=l$. Therefore we get
%
%as required.
Now we substitute a vector \eqref{sol3} from the solution space of \eqref{system 6} to system \eqref{system 7} and use identity \eqref{ident2} with $l=s=\frac{w-1}{2}+i$, $j\mapsto\frac{w+1}{2}-i+j$ and $m=w+2l-1$. We get
\begin{eqnarray*}
\sum_{j=2l}^{2i-1}(-1)^j{\frac{w-1}{2}+i\choose\frac{w+1}{2}-i+j} {w+j\choose w+2l-1}&=&\sum_{j\in\mathbb Z}(-1)^j{\frac{w-1}{2}+i\choose\frac{w+1}{2}-i+j} {w+j\choose w+2l-1}\\
&-&{\frac{w-1}{2}+i\choose\frac{w-1}{2}-i+2l}=0,
\end{eqnarray*}
which proves the lemma.
\end{proof}

\begin{lemma}
\label{ident3lemma}
The following identity holds
\begin{equation}
\label{ident3}
 \sum_{i=0}^{\mu}{\mu\choose i}{\omega+i\choose y-i}= \sum_{i=0}^{\mu}{\mu\choose i}{\omega+i\choose 2\mu+w-y-i}
\end{equation}
\end{lemma}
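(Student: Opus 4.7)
The plan is to recognize both sides as coefficients of the same polynomial and then invoke a palindromic symmetry (treating the apparent $w$ on the right-hand side as a misprint for $\omega$, since the sum over $i$ requires homogeneity in the parameter).

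First I would convert each side to a generating function coefficient. Using the standard identity $\binom{\omega+i}{y-i}=[t^{y-i}](1+t)^{\omega+i}$, the left-hand side becomes
\begin{equation*}
\sum_{i=0}^{\mu}\binom{\mu}{i}\binom{\omega+i}{y-i}=[t^y]\,(1+t)^{\omega}\sum_{i=0}^{\mu}\binom{\mu}{i}\bigl(t(1+t)\bigr)^{i}=[t^y]\,(1+t)^{\omega}(1+t+t^2)^{\mu}.
\end{equation*}
The same manipulation applied to the right-hand side produces
\begin{equation*}
\sum_{i=0}^{\mu}\binom{\mu}{i}\binom{\omega+i}{2\mu+\omega-y-i}=[t^{2\mu+\omega-y}]\,(1+t)^{\omega}(1+t+t^2)^{\mu}.
\end{equation*}

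Next I would observe that the polynomial $P(t):=(1+t)^{\omega}(1+t+t^2)^{\mu}$ has degree $\omega+2\mu$ and is palindromic. Indeed, both factors $1+t$ and $1+t+t^2$ are palindromic, and one checks directly
\begin{equation*}
t^{\omega+2\mu}P(1/t)=\bigl(t(1+1/t)\bigr)^{\omega}\bigl(t^2(1+1/t+1/t^2)\bigr)^{\mu}=(1+t)^{\omega}(1+t+t^2)^{\mu}=P(t).
\end{equation*}
Consequently $[t^y]P(t)=[t^{\omega+2\mu-y}]P(t)$, which is precisely the identity \eqref{ident3}.

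There is essentially no obstacle: the only subtle point is parsing the statement correctly (the letter $w$ appearing in the exponent on the right is the same parameter as $\omega$; any other reading is incompatible with dimensional homogeneity of the two sides). Once this is settled, the proof is a one-line generating function computation followed by the palindromic reflection $t\mapsto 1/t$.
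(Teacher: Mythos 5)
Your proof is correct and takes essentially the same approach as the paper: the paper works with the Laurent polynomial $g(t)=(1+t)^\omega(1+t+\tfrac{1}{t})^\mu$ and extracts the coefficient of $t^{y-\mu}$ from two different binomial expansions, which is exactly your palindromic-reflection argument applied to $P(t)=t^\mu g(t)$. Your reading of the misprinted $w$ as $\omega$ is also consistent with how the paper actually applies the identity (with $\omega=w+2s$), so the correction is justified.
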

\begin{proof}
Consider the function $g(t)=(1+t)^\omega(1+t+\frac{1}{t})^\mu$. On the one hand,
\begin{equation*}
%\label{lemma452}
g(t)=(1+t)^\omega\bigl((1+t)+\frac{1}{t})^\mu=\sum_{i=0}^\mu{\mu\choose i}(1+t)^{\omega+i}\frac{1}{t^{\mu-i}}
\end{equation*}
and the coefficient of $t^{y-\mu}$ in the expansion of $g(t)$ into the  powers of $t$ is equal to the left-hand side of \eqref{ident3}. On the other hand,
\begin{equation*}
%\label{lemma452}
g(t)=t^\omega\left(1+\frac{1}{t}\right)^\omega\left(\left(1+\frac{1}{t}\right)+t\right)^\mu=\sum_{i=0}^\mu{\mu\choose i}\left(1+\frac{1}{t}\right)^{\omega+i}t^{\omega+\mu-i}
\end{equation*}
and the coefficient of $t^{y-\mu}$ in the expansion of $g(t)$ into the powers of $t$ is equal to the right-hand side of \eqref{ident3}.
\end{proof}

\begin{proposition}
\label{dimeq}
The solution space of system \eqref{system 3}-\eqref{system 7}
is $(d(k,w)+1)$-dimensional, where $d(k,w)$ is as in \eqref{dkw}.
\end{proposition}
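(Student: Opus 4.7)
The plan is to use the explicit basis of solutions of \eqref{system 7} produced in the proof of Lemma \ref{lemma 7}, and then intersect it with the antisymmetry subspace cut out by \eqref{system 3}. By Lemmas \ref{lemma 6} and \ref{lemma 7}, the solution space of \eqref{system 7} has dimension $N+1$ with $N=\left[\frac{k-w}{2}\right]$, and is spanned by the vectors
$$v^{(l)}_j=\begin{cases}(-1)^j\binom{w+j}{w+2l-1}, & j\ge 2l,\\ 0, & j<2l,\end{cases}\qquad l=0,\ldots,N,$$
obtained from \eqref{sol3} after the substitution \eqref{subst}.

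Writing a general solution as $x=\sum_{l=0}^{N} a_l v^{(l)}$ and substituting into the antisymmetry relations $x_i+x_{m-i}=0$ with $m=k-w+1$ produces, for each $i=0,\ldots,\left[\frac{m}{2}\right]$, an explicit linear equation on $(a_0,\ldots,a_N)$, whose coefficients involve the expressions $(-1)^i\binom{w+i}{w+2l-1}+(-1)^{m-i}\binom{w+m-i}{w+2l-1}$. The sign in front of the second term depends only on the parity of $m$, i.e.\ on the parity of $k$ (since $w$ is odd), and so the analysis naturally splits into the two cases $k=2l+1$ and $k=2l$ that appear in the definition \eqref{dkw} of $d(k,w)$. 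Computing the dimension of the common kernel is then the same as computing the rank of this matrix system.

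The role of the combinatorial identity \eqref{ident3} proved in Lemma \ref{ident3lemma} is exactly to exhibit the redundancies in this rank computation. Choosing $\mu,\omega,y$ in \eqref{ident3} so that the two sides match pairs of antisymmetry equations, I expect to find that the equations group into arithmetic progressions of common difference three yielding only two independent relations per triple; this would be the arithmetic origin of the factor three in the denominator of \eqref{dkw}. A careful bookkeeping in each parity case should then show that the number of independent antisymmetry constraints equals $N-d(k,w)$, whence the common kernel has dimension $d(k,w)+1$.

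The hard part will be this concluding bookkeeping: one must track which of the $\left[\frac{m}{2}\right]+1$ antisymmetry equations remain independent after the identifications supplied by \eqref{ident3}, and verify that no accidental further relations occur beyond those coming from that identity. The two parity cases need to be handled separately because they shift by one the index at which the pattern of independent equations begins, explaining the two branches in the definition \eqref{dkw} of $d(k,w)$.
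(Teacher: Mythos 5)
Your proposal identifies a reasonable reformulation (parametrize the solutions of \eqref{system 7} by the explicit vectors $v^{(l)}$, then impose \eqref{system 3} and count rank), but it stops well short of a proof, and the parts you do sketch are either not genuinely different from the paper's argument or are incorrect in detail.

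First, the lower bound on the dimension (equivalently, the upper bound on the rank of the restricted antisymmetry system) is, in your dual picture, the assertion that $d(k,w)+1$ linear combinations of the \eqref{system 3}-equations vanish identically on $\spn\{v^{(l)}\}$. Unwinding what this means, one must exhibit for each admissible $s$ a combination of \eqref{system 3}-equations that lies in the span of \eqref{system 7}-equations, which is exactly the computation the paper performs in Lemma~\ref{lemma 8}: form the binomial-weighted combination $\sum_i\binom{m_s}{i-a_s}\cdot(\text{eq.\ $i$ of \eqref{system 7}})$ and use identity \eqref{ident3} to show its coefficient vector is symmetric under $j\mapsto k-w+1-j$, hence lies in the span of \eqref{system 3}. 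Your heuristic that the equations ``group into arithmetic progressions of common difference three yielding two independent relations per triple'' does not describe what actually happens: the factor $3$ in $d(k,w)$ comes from the inequality constraining the shift parameter $s$ (the condition $a_s\leq m_s+a_s$ with $m_s=\tfrac{k-2w+1-6s}{2}$, say, in the odd case), not from a literal collapsing of triples of equations.

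Second, and more seriously, the upper bound on the dimension (i.e.\ showing the rank is \emph{exactly} $N-d(k,w)$ and not smaller) is precisely what you defer to ``a careful bookkeeping'' and ``verify that no accidental further relations occur.'' This is the hard half of the proposition, and the paper spends the bulk of the proof on it: an inductive reduction showing that vanishing of $x_{k-w+1-2s}$ for $0\le s\le d(k,w)$ forces the remaining variables to satisfy the same system with parameters $(\widetilde k,\widetilde w)$ satisfying $\widetilde w\ge\widetilde k/2$; then Lemma~\ref{w>k}, which treats $w>\frac{k+1}{2}$ by factoring the relevant map through $\phi_1$ and $\phi_2$ and proving $\mathrm{Im}\,\phi_1\perp\mathrm{Ker}\,\phi_2$ via identity \eqref{ident2}; and finally a separate argument for the boundary case $\widetilde w=\widetilde k/2$ using the nonvanishing at $y=\frac{w+1}{2}$ of a determinant depending polynomially on an auxiliary parameter $y$. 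None of this is bookkeeping, and nothing in the proposal suggests how to reproduce it in the dual picture. Finally, a small point: the claim that the solution space of \eqref{system 7} is exactly $N+1$-dimensional with $N=\left[\frac{k-w}{2}\right]$ does not follow just from Lemmas~\ref{lemma 6} and \ref{lemma 7} (which show the vectors \eqref{sol3} are solutions, not that they span); for $k$ even the variable $x_{k-w+1}$ does not appear in \eqref{system 7} at all and so is free, and one needs to check this does not inflate the dimension.
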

\begin{proof}
First we prove the following
\begin{lemma}\label{lemma 8}
The solution space of system \eqref{system 3}-\eqref{system 7}
is at least $(d(k,w)+1)$-dimensional.
% where $d(k,w)$ is as in \eqref{dkw}.%$\left[\frac{l-w+1}{3}\right]+1$ if $k=2l+1$, and
%$\left[\frac{l-w-1}{3}\right]+1$ if $k=2l$.
%have nontrivial common solution if and only if $w\leq\frac{k+1}{2}$.
\end{lemma}
\begin{proof}
We treat the cases $k=2l+1$ and the case $k=2l$ separately.

{\bf 1) Case $k=2l+1$.}
%First we prove that the dimension of the solution space of system \eqref{system 3}-\eqref{system 7}
%is not less than $\left[\frac{l-w+1}{3}\right]+1$
%if $k=2l+1$, and
%$\left[\frac{l-w-1}{3}\right]+1$ if $k=2l$.
%Consider the case $k=2l+1$.
Fix an integer $s$ such that $0\leq s\leq
%\left[\frac{l-w+1}{3}\right]
d(k,w)$ and let
%Assume $w\leq\frac{k+1}{2}$. Then the number
\begin{equation}
\label{lw}
a_s:=\frac{w+1+4s}{2}, \quad
m_s:=\frac{k-w+2}{2}-s-a_s=\frac{k-2w+1-6s}{2}.
\end{equation}
%belongs to the set $\left\{1,2,\ldots,\left[\frac{k-w+2}{2}\right]\right\}$.
 %If $i\geq l$ then all binomial coefficients from a one row in Pascal triangle in the corresponding equation from system %\eqref{system 7}.
%$\sum_{j=0}^{2i-1}{\frac{w-1}{2}+i\choose \frac{w-1}{2}+1-i+j}x_j=0$ there appear all binomial coefficients from a one row in Pascal triangle.
%Consider all equations from system \eqref{system 7}
%First consider the case when $k$ is odd.
%Let $m=\frac{k-w+2}{2}-l$.
For $i\geq a_s$ multiply the $i$th equation of system \eqref{system 7}  by ${m_s\choose i-a_s}$ and sum up all the obtained equations.
Taking into account \eqref{lw}, we get the following equation
\begin{equation}
\label{lemma451}
\sum_{i=a_s}^{m_s} {m_s+a_s\choose i-a_s} \sum_{j=0}^{2i-1}{\frac{w-1}{2}+i\choose a_s-2s-i+j}x_j=0.
\end{equation}
%(recall that l=\frac{w+1}{2})).
Substituting  $i\mapsto i+a_s$ into \eqref{lemma451} and taking into account \eqref{lw}, we can write \eqref{lemma451} as follows
\begin{equation}\label{sum eq}
\sum_{i=0}^{m_s}{m_s\choose i}\sum_{j=0}^{2i+2a_s-1}{w+2s+i\choose j-2s-i}x_j=0.
\end{equation}

Identity \eqref{ident3}  with $\mu=m_s$, $\omega=w+2s$ and $y=j-2s$ implies that the coefficient of $x_j$ and the coefficient of $x_{2m_s+w+6s-j}$ in \eqref{sum eq} coincide.
%, or, equivalently,
%the following identity holds
%\begin{equation}
%\label{ident3}
% \sum_{i=0}^{\mu}{\mu\choose i}{\omega+i\choose y-i}= \sum_{i=0}^{\mu}{\mu\choose i}{\omega+i\choose 2m+w-y-i}
%\end{equation}
%(we do not assume that $w$ is odd here).
%with $\mu=m_s$, $\omega=w+2s$ and $y=j-2s$.
Note that by \eqref{lw} one has $2m_s+w+6s-j=k-w+1-j$. So, the coefficient of of $x_j$ and the coefficient of $x_{k-w+1-j}$ in \eqref{lemma451} coincide. Thus for any $s$, $0\leq s\leq
 %\left[\frac{l-w+1}{3}\right]
d(k,w)$, the equation of \eqref{system 7}  with $i=\frac{k-w+2}{2}-s$ is a linear combination of other equations from the system  \eqref{system 3}-\eqref{system 7}, which implies the statement of the lemma for odd $k$.
%that the dimension of the solution space of system \eqref{system 3}-\eqref{system 7}
%is not less than $\left[\frac{l-w+1}{3}\right]+1$.
%the equation \eqref{lemma451} is a linear combination of the equations from the system \eqref{system 3}. It implies that the rank of %joined systems \eqref{system 3} and \eqref{system 7} is not maximal and thus it has a nontrivial solution.

{\bf 2) The case of $k=2l$.} This case can be treated similarly. For this fix again an integer $s$ such that $0\leq s\leq %\left[\frac{l-w-1}{3}\right]
d(k,w)$ and let
%Assume $w\leq\frac{k+1}{2}$. Then the number
\begin{equation*}
\label{lw}
a_s:=\frac{w+3+4s}{2}, \quad
m_s:=\frac{k-w+1}{2}-s-a_s=\frac{k-2w-2-6s}{2}.
\end{equation*}
%belongs to the set $\left\{1,2,\ldots,\left[\frac{k-w+2}{2}\right]\right\}$.
 %If $i\geq l$ then all binomial coefficients from a one row in Pascal triangle in the corresponding equation from system %\eqref{system 7}.
%$\sum_{j=0}^{2i-1}{\frac{w-1}{2}+i\choose \frac{w-1}{2}+1-i+j}x_j=0$ there appear all binomial coefficients from a one row in Pascal triangle.
%Consider all equations from system \eqref{system 7}
%First consider the case when $k$ is odd.
%Let $m=\frac{k-w+2}{2}-l$.
then as before for $i\geq a_s$ multiply the $i$th equation of system \eqref{system 7}  by ${m_s\choose i-l_s}$ and sum up all the obtained equations and use identity \eqref{ident3} (with $\mu=m_s$, $\omega=w+1+2s$, and $y=j-2s-1$) to get
%For this let $m=\frac{k-w+1}{2}-l-1$.
%Now assume that $k=2l$.
%For $i\geq l+1$ multiply the $i$th equation of system \eqref{system 7}  by ${m\choose i-l-1}$ and sum up all the obtained equations.
%Then again using the identity \eqref{ident3} (with $w\mapsto w+1$ and $j\mapsto j-1$) we get
that the coefficient of $x_j$ and the coefficient of $x_{k-w+1-j}$ in the considered linear combination of equations from system \eqref{system 7} coincide. Thus for any $s$, $0\leq s\leq
%\left[\frac{l-w-1}{3}\right]
d(k,w)$, the equation of \eqref{system 7}  with $i=\frac{k-w+1}{2}-s$ is a linear combination of other equations from the system  \eqref{system 3}-\eqref{system 7}, which implies the statement of the lemma for an even $k$.
\end{proof}

%Our goal is to prove that \emph {the dimension of the solution space of system \eqref{system 3}-\eqref{system 7}
%is in fact equal to ($d(k,w)$+1)} %is in fact to $\left[\frac{l-w+1}{3}\right]+1$.
By the previous lemma Proposition \ref{dimeq} is equivalent to the fact that the system, obtained from  the system \eqref{system 3}-\eqref{system 7} by crossing out the last $\left[\frac{l-w+1}{3}\right]+1$ equations from the system \eqref{system 7}, has the maximal rank. We call this system the \emph{reduction of system} \eqref{system 3}-\eqref{system 7}. For this let us show that if $x_{k-w+1-2s}=0$ for every $s$ such that
\begin{equation}
\label{ine}
0\leq s\leq %\left[\frac{l-w+1}{3}\right]
d(k,w),
\end{equation}
then the reduction of system \eqref{system 3}-\eqref{system 7} has the trivial solution only.
% We need the following lemma

Indeed, if $x_{k-w+1}=0$ then the first equation of \eqref{system 3} implies that $x_0=0$. Consequently, the first equation of \eqref{system 7} implies that $x_1=0$ and the second equation of \eqref{system 3} implies that $x_{k-w}=0$. In a similar way one can show by induction that from the fact that $x_{k-w+1-2s}=0 $  for all $s$ satisfying \eqref{ine} it follows that $x_j=0$ for  every $j$ such that  $0\leq j\leq 2d(k,w)+1$ or $k-w-2d(k,w)\leq j\leq k-w+1$. But then the remaining variables
$x_j$, $2d(k,w)+2\leq j\leq k-w-2d(k,w)-1$ satisfy the system \eqref{system 3}-\eqref{system 7} with $k$ and $w$  replaced
by $\widetilde k$ and $\widetilde w$,where
\begin{equation}
\label{tilde}
\widetilde k=k-2d(k,w)-2, \quad \widetilde w=w+2d(k,w)+2.
\end{equation}

It is easy to see that
\begin{eqnarray}
&~& \widetilde w>\frac{\widetilde k+1}{2}\quad \text {if  $k$  is odd}\label{tildaineq1}\\
&~& \widetilde w\geq \frac{\widetilde k}{2}\quad \text {if $k$ is even}\label{tildaneq2}
\end{eqnarray}

Not also that if $k$ is even but $k-2w$ is not divided by $6$ then the inequality \eqref{tildaineq1} holds as well.
%or if $k$ is divided by $4$ we have $\tilde w>\frac{\tilde k+1}{2}$.
%So, to finish the proof of
So if $k-2w$ is not divided by $6$ our proposition follows from

\begin{lemma}
\label{w>k}
If $w>\frac{k+1}{2}$, then system \eqref{system 3}-\eqref{system 7} has the trivial solution only.
\end{lemma}
\begin{proof}
By Remark \ref{rem45} it is enough to show that in the considered case the antisymmetric matrix satisfying conditions
(1), (3), and (4) of Lemma \ref{lemma 4} vanishes.

%that the dimension of the solution space of system \eqref{system 3}-\eqref{system 7}
%is not less than $\left[\frac{l-w-1}{3}\right]+1$.
%This completes the case of even $k$.
%\the analogous sum of equations from system \eqref{system 7}, but with indices $i\geq l+1$, and then apply the reasoning from the %case of odd $k$. We take the following combination
%$$
%\sum_{i=l+1}^{\frac{k-w+1}{2}} {\frac{k-w+1}{2}-l\choose i-l} \sum_{j=0}^{2i-1}{\frac{w-1}{2}+i\choose {l-i+j}}x_j=0.
%$$
%and use \eqref{ident3} to prove that the coefficient of $x_j$ and the coefficient of $x_{k-w+1-j}$ there coincide.

%Now, consider all equations from the system \eqref{system 7} with $i=l,l+1,\ldots,\lfloor\frac{k-w+2}{2}\rfloor$. If $k$ is odd take the following combination
%$$
%\sum_{i=l}^{\frac{k-w+2}{2}} {\frac{k-w+2}{2}-l\choose i-l} \sum_{j=0}^{2i-1}{\frac{w-1}{2}+i\choose \frac{w-1}{2}+1-i+j}x_j=0.
%$$
%If $k$ is even take the following combination
%$$
%\sum_{i=l+1}^{\frac{k-w+1}{2}} {\frac{k-w+1}{2}-l\choose i-l} \sum_{j=0}^{2i-1}{\frac{w-1}{2}+i\choose \frac{w-1}{2}+1-i+j}x_j=0.
%$$
%and prove that
%We observe that in the equations above coefficients next to $x_j$ and $x_{k-w+1-j}$ coincide. Therefore the equation is a combination of equations from the system \eqref{system 3}. It follows that the rank of joined systems \eqref{system 3} and \eqref{system 7} is not maximal and thus there is a non-trivial solution.
%Now assume that $w>\frac{k+1}{2}$.
As was already mentioned before,
%the equation $c_{i,j}=c_{i+1,j}+c_{i,j+1}$
condition (3) of Lemma \ref{lemma 4} implies that $c_{i,j}$ are uniquely defined by $c_{i,k+w-1-i}$ for $i=w-1,\ldots,k$ (if $i+j>k+w-1$ then $c_{i,j}=0$). In particular, we can define the mapping
$$
\phi\colon(c_{w-1,k},c_{w,k-1},\ldots,c_{k,w-1})\mapsto (c_{0,k-w+1},c_{1,k-w},\ldots,c_{k-w+1,0}).
$$
 To prove the lemma it is enough to prove that the mapping is bijective.
%definition of a bi-graded Lie algebra of type $(k,w)$,
By condition (1) of Lemma 4.1 $c_{i,j}=0$ for $i+j< w$. If $w>\frac{k+1}{2}$, then $k-w+1<w$. Therefore, if $\phi$ is bijective then $c_{i,k+w-1-i}=0$, and consequently the whole matrix $(c_{i,j})$ vanishes as desired.
%in the contradiction to condition (2) of Lemma \ref{lemma 4}.

The map $\phi$  is the composition of the following two maps
$$
\phi_1\colon(c_{w-1,k},c_{w,k-1},\ldots,c_{k,w-1})\mapsto (c_{0,k},c_{1,k-1},\ldots,c_{k,0})
$$
and
$$
\phi_2\colon(c_{0,k},c_{1,k-1},\ldots,c_{k,0})\mapsto (c_{0,k-w+1},c_{1,k-w},\ldots,c_{k-w+1,0})
$$
defined inductively by recursive relations \eqref{system 1}.
%$c_{i,j}=c_{i+1,j}+c_{i,j+1}$.

It is easy to see that
$$
\phi_1((\underbrace{0,\ldots,0}_i,1,\underbrace{0,\ldots,0}_{k-w+1-i}))=
\left(\underbrace{0,\ldots,0}_i,{w-1\choose 0},{w-1\choose 1},\ldots,{w-1\choose w-1},\underbrace{0,\ldots,0}_{k-w+1-i}\right),
$$
which implies that $\phi_1$ is injective. Besides, $\phi_2$ is surjective, because it is a composition of the maps $$\phi_{2,s}\colon
(c_{0,k-s},c_{1,k-1-s},\ldots,c_{k-s,0})\mapsto (c_{0,k-s-1},c_{1,k-2-s},\ldots,c_{k-s-1,0}), \quad 0\leq s\leq w-2$$ defined by relations \eqref{system 1} and each of this map has a one dimensional kernel and therefore surjective. Moreover, by simple induction
$$
{\rm Ker} \,\phi_2={\rm span} \left\{\left((-1)^j{i+j\choose i}\right)_{j=0}^{k}\right\}_{i=0}^ {w-2}.
$$

Finally  identity \eqref{ident2} (with $l=w-1$, $m=i$ and $s=2i$) implies that spaces $\mathrm{Im}\,\phi_1$ and $\mathrm {Ker}\,\phi_2$ are perpendicular with respect to the standard scalar product in $\R^{k+1}$ (in this case the right-hand side of \eqref{ident2} vanishes because $m-l=i-w+1<0$). Therefore the image of $\phi_1$ is transversal to the kernel of $\phi_2$ .This implies
that $\phi$ is bijective and completes the proof of the lemma.
\end{proof}
%, that will imply the bijectivity of $\phi$.
%Indeed,
%$$
%\phi_1((\underbrace{0,\ldots,0}_i,1,\underbrace{0,\ldots,0}_{k-w+1-i}))=
%\left(\underbrace{0,\ldots,0}_i,{w-1\choose 0},{w-1\choose 1},\ldots,{w-1\choose w-1},\underbrace{0,\ldots,0}_{k-w+1-i}\right)
%$$
%and by a simple induction one can prove that
%It follows that $\mathrm{Im}\,\phi_1\cap{\mathrm Ker}\phi_2=\{0\}$ (e.g. one can check using formula \eqref{ident2} that %$\mathrm{Im}\,\phi_1$ and ${\mathrm Ker}\phi_2$ are perpendicular with respect to the standard scalar product in $\R^{k+1}$).
%The image of $\phi_1$ is spanned by sequences: $(0,\ldots,0,{w-1\choose 0},{w-1\choose 1},\ldots,{w-1\choose w-1},\ldots,0)$. On the other hand, the kernel of $\phi_2$ is spanned by sequences: $$\left({0\choose 0},-{1\choose 0},{2\choose 0},\ldots\right), \left({1\choose 1},-{2\choose 1},{3\choose 1},\ldots\right),\ldots,\left({w-1\choose w-1},-{w\choose w-1},{w+1\choose w-1},\ldots\right).$$ We see that the image of $\phi_1$ is perpendicular to the kernel of $\phi_2$ with respect to the standard product.
It remains to prove the proposition in the case when $k-2w$ is divided by $6$. In this case the corresponding $\widetilde k$ and $\widetilde w$ satisfy $\widetilde w=\frac{\widetilde k}{2}$ and  the proposition will follow from
%\begin{lemma}
%\label{w>k2}
the fact that \emph {if $w$ is odd   and $w=\frac{k}{2}$, then system \eqref{system 3}-\eqref{system 7} has the trivial solution only}.
%\end{lemma}
%\begin{proof}
For this consider the following system of equations
\begin{equation}\label{system 7y}
\sum_{j=0}^{2i-1}{y+i\choose 2i+1-j}x_j=0, \quad i=0,1,\ldots\left[\frac{k-w}{2}\right]
\end{equation}
depending on a parameter $y$ (here ${a\choose b}$ is defined for any $a\in \mathbb C$ and integer $b$ as usual: ${a\choose b}:=\frac{a(a-1)\ldots (a-b+1)}{b!}$ if $b\geq 0$ and
${a\choose b}=0$ if $b<0$).
Note that system \eqref{system 7y} coincides with  system \eqref{system 7} for $y=\frac{ w+1}{2}$. It can be shown that the determinant of the matrix of the system \eqref{system 3}-\eqref{system 7y} is a nonzero polynomial with respect to $y$ such that the set of its roots is the union of the following two sets: the set of all integers between $-\left[\frac{w}{4}\right]$ and $\frac{w-1}{2}$ and the set $\{-\frac{2s-1}{2}:\left[\frac{w}{4}\right]+3\leq s\leq w\}$. In particular, $y=\frac{w+1}{2}$ is not a root of this polynomial, which proves the last statement.
%Assume that the equations of system \eqref{system 3}-\eqref{system 7} are linearly dependent.
%Taking into account that in the considered case the variable $x_{k-w+1}$ does not appear in system \eqref{system 7} and  that the last equation of system \eqref{system 3}
%(i.e. the equation with $i=\frac{k-w+1}{2}=\frac{w+1}{2}$) has the form $x_{\frac{w+1}{2}}=0$, we get easily that the statement of the the lemma is equivalent to the \emph{linear independence of the following set of polynomials}:
%\begin{eqnarray}
%&~& t^i(1+t)^{\frac{w+1}{2}+i},\quad  0\leq i\leq \frac{w-1}{2}, \label{type1}\\
%&~& t^{\frac{w+1}{2}+i}+t^{\frac{3w-1}{2}-i}, \quad  0\leq i\leq
%\frac{w-3}{2}, \label{type2}\\
%&~& t^i,\quad 0\leq i\leq \frac{w-3}{2}.\label{type3}
%\end{eqnarray}
%Since the degree of polynomials from \eqref{type3} does not exceed $\frac{w-3}{2}$ and degrees of polynomials
%from  \eqref{type1} and  \eqref{type2} are at least  $\frac{w+1}{2}$ the last statement is equivalent to the \emph {linear independence of polynomials from  \eqref{type1} and  \eqref{type2}}.
%%??????????????????????????
%The latter statement can be checked straightforwardly.
%In this case $\tilde k$ and $\tilde w$ from \eqref{tilde} satisfy $\tilde w =\frac{\tilde k}{2}$
%\end{proof}

The proof of Proposition \ref{dimeq} is completed.
\end{proof}

To complete the proof of Theorem \ref{thm 3} it remains to prove that the set of solutions of system \eqref{system 3}-\eqref{system 7}, for which the corresponding antisymmetric matrix $(c_{i,j})$ (see Remark \ref{rem45}) satisfies also condition (2) of Lemma \ref{lemma 4}, is an affine subspace of codimension 1 in the solution space of system \eqref{system 3}-\eqref{system 7}. For this let us prove that $x_{k-w+1}\neq 0$ if and only if $c_{0,w}\neq 0$. Indeed, by definition, $x_{k-w+1}\neq 0$ is equivalent to $c_{k,w-1}\neq 0$ and hence to $c_{w-1,k}\neq 0$. Then, it is equivalent to $c_{0,k}\neq 0$ as follows from \eqref{system 1} and finally to $c_{w,0}\neq 0$ as follows from the last equation of \eqref{system 2}. By the same arguments there exist a unique $c\neq 0$ such that $x_{k-w+1}=c$ if and only if $c_{0,w}=1$. Thus $\{x_{k-w+1}=c\}$ is the affine subspace in the solution space of system \eqref{system 3}-\eqref{system 7} we are looking for.
%in such a way that $c_{0,w}=1$.
In this way the theorem is proved in the case $w\neq\frac{k+1}{2}$. It also shows that for $w=\frac{k+1}{2}$ there is at most one bi-graded Lie algebra of type $(k,w)$ (note that $d(k,w)=0$ in this case). On the other hand, from the constructions at the end of the Introduction, using the theory of $\mathfrak {sl}_2$-representation and Proposition \ref{redbigrade} we know that there exists at least one such bi-graded Lie algebra. These proves the theorem in the case $w=\frac{k+1}{2}$ as well.

$\Box$

Corollary \ref{cor 6} implies
that for $k>2$ the unique maximally symmetric model, up to the local equivalence, for the distributions from the considered class exists if and only if $k\equiv 1 \mod 4$ or $k\not \equiv 1 \mod 4$, $d(k,1)=0$, and $d(k,3)<0$. The latter  occurs exactly in the following cases: $k=3$, $4$, $6$. The nontrivial products in the corresponding bi-graded Lie algebras given by \eqref{symm79} in the case $k=3$, by \eqref{symm911} in the case $k=4$, and by \eqref{symm1315} in the case $k=6$  can be directly obtained from conditions (1)-(4) for $c_{i,j}$ listed in  Lemma \ref{lemma 4}. Finally, the set of maximally symmetric models is discrete and consists more than one element if and only if $d(k,1)=0$ and
$d(k,3)=0$. This occurs in the case $k=8$ only and there are exactly two nonequivalent models with $22$-dimensional algebra of infinitesimal symmetries:
one model with $w=1$ and one model with $w=3$. In all other cases the set of maximally symmetric models depend on continuous parameters.
\medskip

{\bf Acknowledgements.}
%\vskip .1in
We would like to thank Boris Doubrov for very useful discussions.
 %especially regarding the most symmetric model for $k\equiv 1\,\mod 4$.
%space of common solutions of systems \eqref{system 3} and \eqref{system 4} is one-dimensional ,which implies that
%\end{proof}

%Theorem \ref{thm 3} and Corollary \ref{cor 5} imply:

%\begin{corollary}\label{cor 6}
%Let $k>2$. If $k=2l+1$, $k\equiv 1\mod 4$ and $w=\frac{k+1}{2}$ then there is a unique $(2k+1,2k+3)$-distribution $D$ which satisfy $w_D=w$ and have $2k+7$-dimensional infinitesimal symmetry algebra. If $k=2l+1$ is an arbitrary odd number then for any odd $w<\frac{k+1}{2}$ there is $\lfloor\frac{l-w+1}{3}\rfloor$-parametric family of $(2k+1,2k+3)$-distributions $D$ which satisfy $w_D=w$ and have $2k+6$-dimensional infinitesimal symmetry algebra. If $k=2l$ is even then then for any odd $w\leq\frac{k+1}{2}$ there is $\lfloor\frac{l-w-1}{3}\rfloor$-parametric family of of $(2k+1,2k+3)$-distributions with $2k+6$-dimensional infinitesimal symmetry algebra.
%\end{corollary}


\begin{thebibliography}{99}

%\bibitem{agrachev}
%A.A. Agrachev, Feedback-invariant optimal control theory - II.
%Jacobi Curves for Singular Extremals, J. Dynamical and Control
%Systems, {\bf 4}(1998), No. 4 , 583-604.
%
%\bibitem{jac1}
%A. Agrachev, I. Zelenko, {\sl Geometry of Jacobi curves.
%I}, J. Dynamical and Control systems, {\bf 8}(2002),No. 1, 93-140.


\bibitem{cart10}
E. Cartan, {\sl Les systemes de Pfaff a cinq variables et
les equations aux derivees partielles du second ordre},
%Ann. Sci. Ecole Normale 27(3), 1910, pp 109-192; reprinted
%in his
Oeuvres completes, Partie II, vol.2, Paris,
Gautier-Villars, 1953, 927-1010.

\bibitem{radko}
B. Doubrov, O. Radko, Graded Nilpotent Lie Algebras of Infinite
Type, preprint.

\bibitem{doubzel1}
B. Doubrov, I. Zelenko, {\sl A canonical frame for
nonholonomic rank two distributions of maximal class},  C.R. Acad.
Sci. Paris, Ser. I, Vol. 342, Issue 8 (15 April 2006), 589-594,
%(see also  arxiv math. DG/0504319)

\bibitem{doubzel2}
B. Doubrov, I. Zelenko, {\sl
On local geometry of nonholonomic rank 2 distributions}, Journal of the London Mathematical Society, 80(3):545-566, 2009; doi: 10.1112/jlms/jdp044.
%arxiv math.DG/0703662, 22 pages.

\bibitem{doubzel3}
B. Doubrov, I. Zelenko, {\sl On local geometry of rank 3 distributions with 6-dimensional square},
submitted, arXiv:0807.3267v1[math. DG],40 pages.

%\bibitem{tan3} N.~Tanaka, \emph{On affine symmetric spaces and the automorphism groups of product manifolds}, Hokkaido
%Math. J. 14 (1985) 277--351.

%\bibitem{yatsui} T.~Yatsui, \emph{On pseudo-product graded Lie algebras}, Hokkaido Math. J. 17, No.3,
%(1988), 333--343.
\bibitem{Fulton} W.~Fulton, J.~Harris, \emph{Representation theory: a first
course,} Springer--Verlag, NY, 1991.

\bibitem{Gantmacher} Gantmacher, F. R. Gantmacher,\emph{ The theory of matrices},. Trans. from the Russian, vols. I and II. New York, Chelsea, 1959.
\bibitem{Gauger} M. Gauger, \emph{On the Classification of Metabelian Lie algebras}, Transection of American Mathematical Society, Vol.179 (May, 1973), pp. 293-329.

\bibitem{JKP} B. Jakubczyk, W. Kry\'nski, F. Peletier, \textit{Characteristic vector fields of generic distributions of corank 2}, Ann. Inst. H. Poincar\'e Anal. Non Lin\'eaire 26(2009), no.1, 23-38.

\bibitem {kuz} O.~ Kuzmich, \emph{Graded nilpotent Lie
    algebras in low dimensions}, Lobachevskii Journal of Mathematics, Vol.3, 1999,
    pp.147-184.

\bibitem{mori} T. Morimoto, {\sl Geometric structures on
filtered manifolds}, Hokkaido Math. J.,{\bf 22}(1993),
263--347.

\bibitem{mor} T.~Morimoto, \emph{Lie algebras, Geometric Structures, and Differential Equations on Filtered manifolds},
Advanced Studies in Pure Mathematics \textbf{33}, 2002, Lie Gropus, Geometric Structures and Differential Equations--One Hundreds Years after Sophus Lie, 205--252.

\bibitem{tan1} N.~Tanaka, \emph{On differential systems, graded Lie
  algebras and pseudo-groups}, J.\ Math.\ Kyoto.\ Univ., \textbf{10}
  (1970), pp.~1--82.


\bibitem{tan2} N.~Tanaka, \emph{On the equivalence problems associated with simple graded Lie
algebras}, Hokkaido Math. J.,{\bf 6}(1979), pp 23-84.



\bibitem{yamag} K. Yamaguchi, {\sl Differential Systems Associated
with Simple Graded Lie Algebras}, Adv. Studies in Pure
Math.,{\bf 22}(1993), pp.413-494.

\bibitem{zelvar}
I. Zelenko, {\sl Variational Approach to Differential Invariants of
Rank 2 Vector Distributions}, Differential Geometry and Its
Applications, Vol. 24, Issue 3 (May 2006), 235-259.
%communicated by
%Robert Bryant
% 27
%pages
%the long version in arxiv math. DG/0402171).
\bibitem{zelcart}
I. Zelenko,
{\sl Fundamental form and Cartan's tensor of
(2,5)-distributions coincide}, J. Dynamical and Control Systems,
 Vol.12, No. 2, April 2006, 247-276.

\end{thebibliography}
\end{document}